\documentclass[a4paper,11pt]{article}
\usepackage{LatexDefinitions}
\usepackage{color, booktabs}

\floatstyle{ruled}
\newfloat{algorithmfloat}{t}{lop}
\floatname{algorithmfloat}{Algorithm}

\newcommand{\kernel}{K}
\newcommand{\sKernel}{k}
\newcommand{\proj}{\tn{P}}

\newcommand{\piOpt}{\pi^\ast}
\DeclareMathOperator{\dist}{dist}
\usepackage{esint}

\newcommand{\partA}{\mc{J}_A}
\newcommand{\partB}{\mc{J}_B}
\newcommand{\partC}{\mc{J}_C}
\newcommand{\partGeneric}{\mc{J}}
\newcommand{\muMin}{\mu_{\tn{min}}}

\title{Domain decomposition for entropy regularized optimal transport}
\author{Mauro Bonafini, Bernhard Schmitzer}
\date{\today}


\begin{document}
\maketitle
\begin{abstract}
We study Benamou's domain decomposition algorithm for optimal transport in the entropy regularized setting.
The key observation is that the regularized variant converges to the globally optimal solution under very mild assumptions.
We prove linear convergence of the algorithm with respect to the Kullback--Leibler divergence and illustrate the (potentially very slow) rates with numerical examples. 

On problems with sufficient geometric structure (such as Wasserstein distances between images) we expect much faster convergence.
We then discuss important aspects of a computationally efficient implementation, such as adaptive sparsity, a coarse-to-fine scheme and parallelization, paving the way to numerically solving large-scale optimal transport problems. We demonstrate efficient numerical performance for computing the Wasserstein-2 distance between 2D images and observe that, even without parallelization, domain decomposition compares favorably to applying a single efficient implementation of the Sinkhorn algorithm in terms of runtime, memory and solution quality.
\end{abstract}
\section{Introduction}
\subsection{Motivation}
\label{sec:IntroMotivation}
\paragraph{(Computational) optimal transport.} Optimal transport is a fundamental optimization problem with applications in various branches of mathematics.
Let $\mu$ and $\nu$ be probability measures over spaces $X$ and $Y$ and let $\Pi(\mu,\nu)$ be the set of transport plans, i.e.~probability measures on $X \times Y$ with $\mu$ and $\nu$ as first and second marginal.
Further, let $c : X \times Y \to \R$ be a \emph{cost function}.
The Kantorovich formulation of optimal transport is then given by
\begin{align}
	\label{eq:IntroOT}
	\inf \left\{ \int_{X \times Y} c(x,y)\,\diff \pi(x,y) \middle| \pi \in \Pi(\mu,\nu) \right\}.
\end{align}
We refer to the monographs \cite{Villani-OptimalTransport-09} and \cite{SantambrogioOT} for a thorough introduction and historical context.
Due to its geometric intuition and robustness it is becoming particularly popular in image and data analysis.
Therefore, one main challenge is the development of efficient numerical methods, which has seen immense progress in recent years, such as solvers for the Monge--Amp\`ere equation \cite{ObermanMongeAmpere2014}, semi-discrete methods \cite{LevySemiDiscrete2015,KiMeThi2019}, entropic regularization \cite{Cuturi2013}, and multi-scale methods \cite{MultiscaleTransport2011,SchmitzerSchnoerr-SSVM2013}.
An introduction to computational optimal transport, an overview on available efficient algorithms, and applications can be found in \cite{PeyreCuturiCompOT}.

\paragraph{Domain decomposition.}
Benamou introduced a domain decomposition algorithm for Was\-ser\-stein-2 optimal transport on $\R^d$ \cite{BenamouPolarDomainDecomposition1994}, based on Brenier's polar factorization \cite{MonotoneRerrangement-91}.
In the language of \eqref{eq:IntroOT} the algorithm works as follows: Let $X$ and $Y$ be subsets of $\R^d$ and $c(x,y)=\|x-y\|^2$ the squared Euclidean distance. Let now $(X_1,X_2,X_3)$ be a partition of $X$ and write $X_{\{1,2\}} \assign X_1 \cup X_2$, $X_{\{2,3\}} \assign X_2 \cup X_3$ and let $\iterz{\pi} \in \Pi(\mu,\nu)$ be some initial feasible transport plan.
The next iterate $\iter{\pi}{1}$ is then obtained by optimizing $\iterz{\pi}$ on the set $X_{\{1,2\}} \times Y$, while keeping it fixed on $X_3 \times Y$.
Next, $\iter{\pi}{2}$ is obtained by optimizing $\iter{\pi}{1}$ on $X_{\{2,3\}} \times Y$, while keeping it fixed on $X_1 \times Y$. These two steps are then repeated.
This is illustrated (for entropic optimal transport) in Figure \ref{fig:IntroIllustration}.
\begin{figure}[t]
	\centering
	{\def\imgw{2.3cm}
	\begin{tikzpicture}[x=\imgw,y=\imgw,img/.style={inner sep=0pt,anchor=south west}]
	\foreach \x/\y/\l in {0/0/0,1/0/1,2/0/2,3/0/3,4/0/4,5/0/5} {
		\node[img] at (1.1*\x,-1.15*\y)[label={below:$\iter{\pi}{\l}$}]{\includegraphics[width=\imgw]{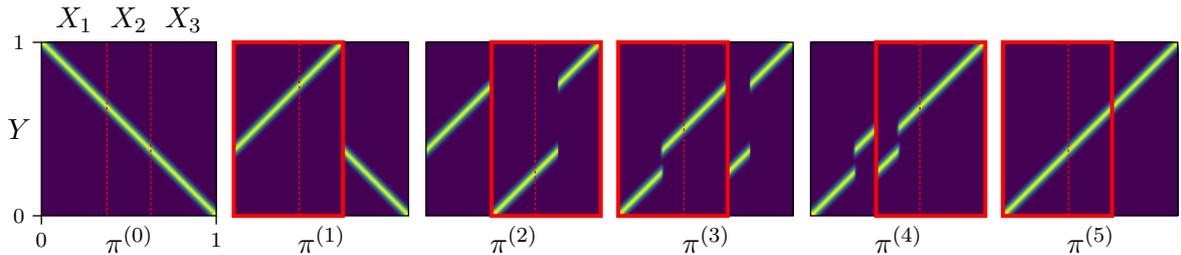}};
	}
	{\scriptsize
	\begin{scope}[line width=0.5pt,black]
		\draw (0,0) -- ++(-0.05,0) node[anchor=east]{0};
		\draw (0,1) -- ++(-0.05,0) node[anchor=east]{1};
		\draw (0,0) -- ++(0,-0.05) node[anchor=north]{0};
		\draw (1,0) -- ++(0,-0.05) node[anchor=north]{1};
	\end{scope}
	}
	\node at (0,0.5) [anchor=east]{$Y$};
	\node at (24/128,1) [anchor=south]{$X_1$};
	\node at (0.5,1) [anchor=south]{$X_2$};
	\node at (104/128,1) [anchor=south]{$X_3$};
	\foreach \x in {0,1,2,3,4,5} {
		\begin{scope}[shift={(1.1*\x,0)},x={(\imgw/128,0)},y={(0,\imgw/128)}]
		\draw[red,line width=0.5pt,dash pattern=on 1pt off 1pt] (48,0) -- (48,128);
		\draw[red,line width=0.5pt,dash pattern=on 1pt off 1pt] (80,0) -- (80,128);
		\end{scope}
	}
	\foreach \x in {0,1,2,3,4,5} {
		\begin{scope}[shift={(1.1*\x,0)},x={(\imgw/128,0)},y={(0,\imgw/128)}]
		\draw[black,line width=0.5pt] (0,0) -- (128,0) -- (128,128) -- (0,128) -- cycle;
		\end{scope}
	}
	\foreach \x in {1,3,5} {
		\begin{scope}[shift={(1.1*\x,0)},x={(\imgw/128,0)},y={(0,\imgw/128)}]
		\draw[red,line width=1.5pt] (0,0) -- (80,0)-- (80,128) -- (0,128) -- cycle;
		\end{scope}
	}
	\foreach \x in {2,4} {
		\begin{scope}[shift={(1.1*\x,0)},x={(\imgw/128,0)},y={(0,\imgw/128)}]
		\draw[red,line width=1.5pt] (48,0) -- (128,0) -- (128,128) -- (48,128) -- cycle;
		\end{scope}
	}
	\end{tikzpicture}
	}
\caption{Iterates of the domain decomposition algorithm for entropic optimal transport in one dimension (yellow represents high, blue low mass density). %
$\mu=\nu$ is the uniform probability measure on $X=Y=[0,1]$, equipped with cost function $c(x,y)=\|x-y\|^2$. The optimal coupling is a blurred `diagonal' measure (approximately equal to $\iter{\pi}{5}$). We initialize with an `anti-diagonal' feasible $\iterz{\pi}$, i.e.~the order must be approximately reversed. $X$ is divided into three cells (dashed lines, first panel) and $\iterl{\pi}$ is alternatingly optimized over $X_{\{1,2\}}$ and $X_{\{2,3\}}$. Iterates $\iter{\pi}{1}$ to $\iter{\pi}{5}$ were obtained by optimizing the previous iterate over the red rectangle.}
\label{fig:IntroIllustration}
\end{figure}

In \cite{BenamouPolarDomainDecomposition1994} it is shown that this algorithm converges to the globally optimal solution when $\mu$ is Lebesgue absolutely continuous and $(X_1,X_2,X_3)$ satisfy a `convex overlap' condition, which roughly states that `if a function is convex on $X_{\{1,2\}}$ and $X_{\{2,3\}}$, then it must be convex on $X$'.
Further, it is shown that this method can easily be extended to more than three cells, thus leading to a parallelizable algorithm.
Unfortunately, it is also demonstrated that the discretized method does not converge to a global minimizer (for $d>1$).
When the discretization is refined, it is shown that the optimal solution is recovered in the limit. But this requires an increasing number of discretization points in the three sets $(X_1,X_2,X_3)$ and thus numerically obtaining a good approximation of the globally optimal solution remains challenging with this method.

\subsection{Outline and contribution}
\label{sec:IntroContribution}
\paragraph{Entropic domain decomposition.}
The key observation in this article is that for entropic optimal transport the domain decomposition algorithm converges to the unique globally optimal solution for arbitrary (measurable) bounded costs $c$, as soon as $\mu(X_2)>0$. In particular, this covers the case where $X$ and $Y$ are discrete and finite.
Therefore, entropic smoothing is not only a useful tool in solving the optimal transport sub-problems arising in domain decomposition, it also facilitates convergence of the whole algorithm.
While the convergence rate can be very slow in suitable worst-case examples, we show empirically that it is fast on problems with sufficient geometric structure,  thus leading to a practical and efficient parallel numerical method for large-scale problems.

\paragraph{Preliminaries and definition of the algorithm.} We start by establishing the basic mathematical setting and notation in Section \ref{sec:Notation}. Some basic facts about entropic optimal transport are recalled in Section \ref{sec:ReminderEntropicOT}.
In Section \ref{sec:Algorithm} the entropic domain decomposition algorithm is defined and a simple convergence proof is sketched.

\paragraph{Theoretical and empirical worst-case convergence rate.} Section \ref{sec:Convergence} is dedicated to proving that the iterates converge to the unique optimal solution linearly with respect to the Kullback--Leibler divergence.
We first give a proof for the three-cell example discussed above (Section \ref{sec:ConvergenceSimple}) and then generalize it to more general decompositions (Section \ref{sec:ConvergenceGeneral}).
Our results apply to arbitrary (measurable) bounded cost functions $c$, marginal measures $\mu$ and $\nu$, and we make minimal assumptions on the decomposition structure (it must be `connected' in a suitable sense, Definition \ref{def:PartitionGraph}, in particular convergence only requires $\mu(X_2)>0$ in the three-cell example).
As the entropic regularization parameter tends to zero, our rate bound tends to one exponentially.
This is reminiscent of the convergence rate for the standard Sinkhorn algorithm established in \cite{FranklinLorenz-Scaling-1989} with respect to Hilbert's projective metric.
While we do not expect that our bounds on the convergence rate are tight, we demonstrate in Section \ref{sec:NumericsWorstCase} with some numerical (near) worst-case examples that they capture the qualitative behaviour of the algorithm on difficult problems.

\paragraph{Relation to parallel sorting.} The slow convergence rates from Section \ref{sec:Convergence} do not suggest that the algorithm is efficient. But these slow rates rely on maliciously designed counter-examples. In practice usually much more geometric structure is available, similar to the setting originally considered by Benamou \cite{BenamouPolarDomainDecomposition1994}.
While a detailed analysis of the convergence rate in these cases is beyond the scope of this article, to provide some intuition, we discuss in Section \ref{sec:ConvergenceSorting} the relation of the domain decomposition algorithm for optimal transport with the odd-even transposition parallel sorting algorithm \cite[Exercise 37]{KnuthArtProgramming1998-3} in one dimension. This algorithm is known to converge in $O(N)$ iterations where $N$ is the number of cells that the domain is partitioned into.

\paragraph{Practical implementation and geometric large-scale examples.} In Section \ref{sec:Implementation} we provide a practical version of the domain decomposition algorithm, leading to an efficient numerical method for large scale problems with geometric structure.
We discuss how the memory footprint can be reduced, how one can handle approximate solutions to the sub-problems as obtained by the Sinkhorn algorithm, and how to combine the algorithm with the $\veps$-scaling heuristic and a multi-scale scheme, see \cite{SchmitzerScaling2019}.

\paragraph{Numerical experiments and comparison with single Sinkhorn algorithm.}
The efficiency of the implementation is then demonstrated numerically by computing the Wasserstein-2 distance between images in Section \ref{sec:NumericsLargeScale}.
We report accurate approximate primal and dual solutions with low entropic regularization artifacts after a logarithmic number (w.r.t.~the marginal size) of domain decomposition iterations. On a standard desktop computer with 6 cores a high-quality approximate optimal transport between two mega-pixel images is computed in approximately 4 minutes.

A comparison to a single Sinkhorn algorithm, when applied to the full problem, is given in Section \ref{sec:NumericsSingle}.
We find that the domain decomposition approach has several key advantages.
First, it allows more extensive parallelization.
The standard Sinkhorn algorithm is already parallel in the sense that each half-iteration consists essentially of a matrix-vector product that can be parallelized. But the results of this operation must be communicated to all workers after each half-iteration, thus only allowing local parallelization such as a single GPU.
In the domain decomposition variant, small sub-problems are solved independently and their results must only be communicated upon completion, thus allowing parallelization over multiple machines.

Second, domain decomposition allows to reduce the memory footprint.
The naive Sinkhorn algorithm requires storage of the full kernel matrix, the size of which grows quadratically with the marginal size.
This can be avoided by using efficient heuristics, such as Gaussian convolutions or pre-factored heat kernels \cite{Solomon-siggraph-2015} but these methods only work in particular settings and with sufficiently high regularization.
Alternatively, with adaptive kernel truncation \cite{SchmitzerScaling2019} the memory demand is approximately linear in the marginal size. But to retain numerical stability, the truncation parameter may not be chosen too aggressively, thus still making memory demand a practical constraint.
In the domain decomposition variant, only the kernel matrices for the sub-problems that are currently being re-solved are needed; for all other sub-problems only information about their marginals is kept.
In addition, via parallelization this may also be distributed over several computers.

In our numerical experiments we find that in comparable runtime (even without parallelization) the domain decomposition algorithm obtains more accurate primal and dual iterates by requiring only approximately 11\% of the memory of the single Sinkhorn solver.
Dealing only with small sub-problems at a time also makes it easier to handle numerical delicacies associated with small entropic regularization parameters. The domain decomposition method therefore more reliably solves large problems.

\section{Background}
\subsection{Setting and notation}
\label{sec:Notation}
\begin{itemize}
	\item $X$ and $Y$ are compact metric spaces. We assume compactness to avoid overly technical arguments while covering the numerically relevant setting.
	\item For a compact metric space $Z$ the set of finite signed Radon measures over $Z$ is denoted by $\meas(Z)$. The subsets of non-negative and probability measures are denoted by $\measp(Z)$ and $\prob(Z)$. The Radon norm of $\mu \in \meas(Z)$ is denoted by $\|\mu\|_{\meas(Z)}$ and one has $\|\mu\|_{\meas(Z)}=\mu(Z)$ for $\mu \in \measp(Z)$.
	We often simply write $\|\mu\|$ for the norm when the meaning is clear from context.
	\item For $\mu \in \measp(Z)$ we denote by $L^1(Z,\mu)$, $L^\infty(Z,\mu)$ the usual function spaces and add a subscript $+$ to denote the subsets of $\mu$-a.e.~non-negative functions. The corresponding norms are denoted by $\|\cdot\|_{L^1(Z,\mu)}$ and $\|\cdot\|_{L^\infty(Z,\mu)}$ but we often merely write $\|\cdot\|_1$ and $\|\cdot\|_\infty$ (or even $\|\cdot\|$ for the latter) when space and measure are clear from context.
	\item For $\mu \in \measp(Z)$ and a measurable function $u : Z \to \R$ we denote by $\la \mu, u \ra$ the integration of $u$ with respect to $\mu$, and define the normalized integral
	\begin{align}\label{eq:fint}
	\fint_Z u \,\diff \mu \assign \frac{1}{\mu(Z)} \int_Z u\,\diff \mu = \frac{\la \mu, u \ra}{\mu(Z)}.
	\end{align}
	\item For $\mu \in \measp(Z)$ and measurable $S \subset Z$ the restriction of $\mu$ to $S$ is denoted by $\mu \restr S$.
	Similarly, for a measurable function $u : Z \to \R$, set $u \restr S(x)=u(x)$ if $x \in S$, and $0$ otherwise.
	\item The maps $\proj_X : \measp(X \times Y) \to \measp(X)$ and $\proj_Y : \measp(X \times Y) \to \measp(Y)$ denote the projections of measures on $X \times Y$ to their marginals, i.e.
	\begin{align*}	
	(\proj_X \pi)(S_X) \assign \pi(S_X \times Y) \qquad \tn{and} \qquad (\proj_Y \pi)(S_Y) \assign \pi(X \times S_Y)
	\end{align*}
	for $\pi \in \measp(X \times Y)$, $S_X \subset X$, $S_Y \subset Y$ measurable.
	\item For (measurable) functions $a : X \to \RCupInf$, $b : Y \to \RCupInf$, the functions $a \oplus b, a \otimes b : X \times Y \to \RCupInf$ are given by
	\begin{align*}
		(a \oplus b)(x,y) & = a(x) + b(y), &
		(a \otimes b)(x,y) & = a(x) \cdot b(y).
	\end{align*}
	For two measures $\mu \in \measp(X)$, $\nu \in \measp(Y)$ their product measure on $X \times Y$ is denoted by $\mu \otimes \nu$.
\end{itemize}

\subsection{Entropic optimal transport}
\label{sec:ReminderEntropicOT}
We recall in this Section the entropic regularization approach to optimal transport and collect the main existence and characterization results which will be needed in this article.

\begin{definition}[Kullback--Leibler divergence]
Let $Z$ be a compact metric space. For $\mu \in \meas(Z)$, $\nu \in \measp(Z)$ the \emph{Kullback--Leibler divergence} (or relative entropy) of $\mu$ w.r.t.~$\nu$ is given by
\begin{align*}
\KL(\mu|\nu) \assign \begin{cases}
\int_Z \varphi\left(\RadNik{\mu}{\nu}\right)\,\diff \nu & \tn{if } \mu \ll \nu,\,\mu \geq 0, \\
+ \infty & \tn{else,}
\end{cases} 
\quad \tn{with} \quad
\varphi(s) \assign \begin{cases}
s\,\log(s)-s+1 & \tn{if } s>0, \\
1 & \tn{if } s=0, \\
+ \infty & \tn{else.}
\end{cases}
\end{align*}
The Fenchel--Legendre conjugate of $\varphi$ is given by $\varphi^\ast(z) = \exp(z)-1$.
\end{definition}

\begin{definition}[Entropy regularized optimal transport]
Let $\mu \in \prob(X)$, $\nu \in \prob(Y)$ and $\hat{\mu} \in \measp(X)$ and $\hat{\nu} \in \measp(Y)$ such that
\begin{align*}
	\|\hat{\mu}\|=\|\hat{\nu}\| & >0, &
	\hat{\mu} & \ll \mu, & \hat{\nu} & \ll \nu, &
	\RadNik{\hat{\mu}}{\mu} & \in L^\infty(X,\mu), &
	\RadNik{\hat{\nu}}{\nu} & \in L^\infty(Y,\nu).
\end{align*}
Similar to above, denote by
\begin{align*}
	\Pi(\hat{\mu},\hat{\nu}) & \assign
	\left\{ \pi \in \measp(X \times Y) \,\middle|\, \proj_X \pi=\hat{\mu},\proj_Y \pi = \hat{\nu}\right\}
\end{align*}
the set of \emph{transport plans} between $\hat{\mu}$ and $\hat{\nu}$.
The condition $\|\hat{\mu}\|=\|\hat{\nu}\|$ ensures that the set is non-empty.
For a lower-semicontinuous cost function $c \in L^\infty_+(X \times Y, \mu \otimes \nu)$ the Kantorovich optimal transport problem between $\hat{\mu}$ and $\hat{\nu}$ is then given by
\begin{align}
\label{eq:OTPrimal}
\inf \left\{ \int_{X \times Y} c(x,y)\,\diff \pi(x,y) \middle| \pi \in \Pi(\hat{\mu},\hat{\nu}) \right\}.
\end{align}
Existence of minimizers in this setting is provided, for instance, by \cite[Theorem 4.1]{Villani-OptimalTransport-09}.
For a regularization parameter $\veps>0$ define
\begin{equation}
\sKernel \assign \exp(-c/\veps) \in L^\infty_+(X \times Y,\mu \otimes \nu), \qquad \text{and} \qquad \kernel \assign \sKernel \cdot (\mu \otimes \nu).
\end{equation}
In this article we will frequently exploit that $k$ is bounded by $0 < \exp(-\|c\|_{\infty})/\veps) \leq k \leq 1$.
Then the entropic optimal transport problem, regularized with respect to $\mu \otimes \nu$ is given by
\begin{equation}
\inf \left\{ \veps \KL(\pi|\kernel) \,\middle|\, \pi \in \Pi(\hat{\mu},\hat{\nu})\right\}.
\label{eq:EntropicOTPrimal}
\end{equation}
\end{definition}
Problem \eqref{eq:EntropicOTPrimal} can be solved (approximately) with the Sinkhorn algorithm (Remark \ref{rem:Sinkhorn}). In addition, in our article, entropic smoothing will ensure convergence of the domain decomposition scheme.
We refer to \cite[Chapter 4]{PeyreCuturiCompOT} for an overview on entropic optimal transport and its numerical implications.
The ($\Gamma$-)convergence of \eqref{eq:EntropicOTPrimal} to \eqref{eq:OTPrimal} as $\veps \to 0$ has been shown in \cite{Cominetti-ExpBarrierConvergence-1992,LeonardSchroedingerMK2012,Carlier-EntropyJKO-2015} in various settings and with different strategies.

We now collect some results about entropic optimal transport required in this article. A proof is given in the appendix.
\begin{proposition}[Optimal entropic transport couplings]\hfill
	\label{prop:EntropicOTBasic}
	\begin{enumerate}[(i)]
		\item \eqref{eq:EntropicOTPrimal} has a unique minimizer $\pi^\ast \in \Pi(\hat{\mu},\hat{\nu})$.
			\label{item:EntropicOTBasic:Unique}
		\item \label{item:EntropicOTBasic:Scaling}
			There exist measurable $u^\ast : X \to \R_+$, $v^\ast : Y \to \R_+$ such that $\pi^\ast = u^\ast \otimes v^\ast \cdot \kernel$. $u^\ast$ and $v^\ast$ are unique $\mu$-a.e.~and $\nu$-a.e.~up to a positive re-scaling $(u^\ast,v^\ast) \to (\lambda \cdot u^\ast, \lambda^{-1} \cdot v^\ast)$ for $\lambda>0$.
		\item The couple ($u^\ast$, $v^\ast$) satisfies the integral equations
		\begin{equation}
		\label{eq:ExtremalityCondition}
		u^\ast(x) \int_Y \sKernel(x,y')v^\ast(y')\,\diff\nu(y') = \RadNik{\hat{\mu}}{\mu}(x) \quad \text{and} \quad v^\ast(y) \int_X \sKernel(x',y)u^\ast(x')\,\diff\mu(x') = \RadNik{\hat{\nu}}{\nu}(y)
		\end{equation}
		for $\mu$-a.e.~$x \in X$ and $\nu$-a.e.~$y \in Y$.
		\label{item:EntropicOTBasic:Marginals}
			\item $u^\ast \in L^\infty_+(X,\mu)$, $v^\ast \in L^\infty_+(Y,\nu)$, $\|u^\ast\|_{L^1(X,\mu)}>0$, $\|v^\ast\|_{L^1(Y,\nu)}>0$,
			\begin{align}
			\label{eq:uBound}
			\tfrac{1}{\|v^\ast\|_1} \RadNik{\hat{\mu}}{\mu}(x) & \leq u^\ast(x) \leq \tfrac{\exp(\|c\|_\infty/\veps)}{\|v^\ast\|_1} \RadNik{\hat{\mu}}{\mu}(x) & & \text{for } \mu\text{-a.e.~}x \in X, \\
			\label{eq:vBound}
			\tfrac{1}{\|u^\ast\|_1} \RadNik{\hat{\nu}}{\nu}(y) & \leq v^\ast(y) \leq \tfrac{\exp(\|c\|_\infty/\veps)}{\|u^\ast\|_1} \RadNik{\hat{\nu}}{\nu}(y) & & \text{for } \nu\text{-a.e.~}y \in Y
			\end{align}
			and $\log u^\ast \in L^1(X,\hat{\mu})$, $\log v^\ast \in L^1(Y,\hat{\nu})$.
			\label{item:EntropicOTBasic:Bounds}
	\end{enumerate}
\end{proposition}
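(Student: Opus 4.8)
For part~(\ref{item:EntropicOTBasic:Unique}) the plan is the direct method of the calculus of variations. The feasible set $\Pi(\hat\mu,\hat\nu)$ is non-empty (for instance $\tfrac{1}{\|\hat\mu\|}\,\hat\mu\otimes\hat\nu$ is admissible, using $\|\hat\mu\|=\|\hat\nu\|>0$), convex, and weakly-$\ast$ compact as a bounded, weakly-$\ast$ closed subset of $\meas(X\times Y)$ (here compactness of $X\times Y$ enters through Banach--Alaoglu). The objective $\pi\mapsto\veps\,\KL(\pi|\kernel)$ is convex, weakly-$\ast$ lower semicontinuous, and proper on the feasible set: at the plan above its $\kernel$-density equals $\tfrac{1}{\|\hat\mu\|}\,\big(\RadNik{\hat\mu}{\mu}\otimes\RadNik{\hat\nu}{\nu}\big)/\sKernel$, which is bounded since $\RadNik{\hat\mu}{\mu},\RadNik{\hat\nu}{\nu}\in L^\infty$ and $\sKernel\geq\exp(-\|c\|_\infty/\veps)>0$, so the divergence is finite. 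A minimizer therefore exists, and it is unique because $\varphi$ is strictly convex, hence $\KL(\cdot|\kernel)$ is strictly convex on its (convex) effective domain while the constraint set is convex.

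For parts~(\ref{item:EntropicOTBasic:Scaling}) and~(\ref{item:EntropicOTBasic:Marginals}) the plan is to produce the scaling factors directly as a Sinkhorn fixed point. Starting from $v^{(0)}\equiv1$, iterate
\[
u^{(n)}\assign\RadNik{\hat\mu}{\mu}\Big/\int_Y\sKernel(\cdot,y')\,v^{(n-1)}(y')\,\diff\nu(y'),\qquad
v^{(n)}\assign\RadNik{\hat\nu}{\nu}\Big/\int_X\sKernel(x',\cdot)\,u^{(n)}(x')\,\diff\mu(x').
\]
Because $\exp(-\|c\|_\infty/\veps)\leq\sKernel\leq1$, each of the two linear integral operators appearing here maps non-negative functions into functions with uniformly bounded $\sup/\inf$ ratio, hence has finite projective diameter and, by Birkhoff's theorem, contracts Hilbert's projective metric; the normalization steps are isometries for that metric. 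Thus the Sinkhorn map is a strict contraction, its iterates converge (cf.\ the argument for matrix scaling in \cite{FranklinLorenz-Scaling-1989}), and the limits $u^\ast:X\to\R_+$, $v^\ast:Y\to\R_+$ are measurable and satisfy \eqref{eq:ExtremalityCondition}; a direct computation of marginals then shows $\pi^\ast\assign(u^\ast\otimes v^\ast)\cdot\kernel\in\Pi(\hat\mu,\hat\nu)$. That $\pi^\ast$ is the minimizer (which re-proves part~(\ref{item:EntropicOTBasic:Unique})) I would obtain from the Pythagorean identity for the Bregman divergence $\KL$: for any $\pi\in\Pi(\hat\mu,\hat\nu)$ with $\KL(\pi|\kernel)<\infty$,
\[
\KL(\pi|\kernel)=\KL(\pi^\ast|\kernel)+\KL(\pi|\pi^\ast)+\la\pi-\pi^\ast,\ \log u^\ast\oplus\log v^\ast\ra,
\]
where the last term is finite by the integrability established in part~(\ref{item:EntropicOTBasic:Bounds}) (whose proof uses only \eqref{eq:ExtremalityCondition}) and vanishes because $\pi$ and $\pi^\ast$ share the marginals $\hat\mu$ and $\hat\nu$; hence $\KL(\pi|\kernel)\geq\KL(\pi^\ast|\kernel)$ with equality iff $\pi=\pi^\ast$. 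The uniqueness of $(u^\ast,v^\ast)$ up to the stated rescaling then follows from $\pi^\ast=(u^\ast\otimes v^\ast)\cdot\kernel$, uniqueness of $\pi^\ast$, and $\sKernel>0$. As an alternative to the Sinkhorn construction one may invoke Fenchel--Rockafellar duality: the dual of \eqref{eq:EntropicOTPrimal} is $\sup_{a,b}\big\{\la\hat\mu,a\ra+\la\hat\nu,b\ra-\veps\,\la\kernel,\varphi^\ast((a\oplus b)/\veps)\ra\big\}$, strong duality holds and a dual maximizer $(a^\ast,b^\ast)$ exists in $L^\infty$ under the present assumptions, and the primal--dual optimality relation gives $\pi^\ast=\exp((a^\ast\oplus b^\ast)/\veps)\cdot\kernel$, i.e.\ $u^\ast=\exp(a^\ast/\veps)$, $v^\ast=\exp(b^\ast/\veps)$.

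Part~(\ref{item:EntropicOTBasic:Bounds}) then follows by elementary manipulation of \eqref{eq:ExtremalityCondition}. If $\|v^\ast\|_1=0$ then $v^\ast=0$ $\nu$-a.e., so $\|\pi^\ast\|=\la\kernel,u^\ast\otimes v^\ast\ra=0$, contradicting $\|\pi^\ast\|=\|\hat\mu\|>0$; hence $\|v^\ast\|_1>0$ and, symmetrically, $\|u^\ast\|_1>0$. Solving the first equation in \eqref{eq:ExtremalityCondition} for $u^\ast(x)$ and using $\exp(-\|c\|_\infty/\veps)\,\|v^\ast\|_1\leq\int_Y\sKernel(x,y')\,v^\ast(y')\,\diff\nu(y')\leq\|v^\ast\|_1$ yields \eqref{eq:uBound}, in particular $u^\ast\in L^\infty_+(X,\mu)$; the same computation for the second equation gives \eqref{eq:vBound}. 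Finally, by \eqref{eq:uBound} the function $\log u^\ast$ differs from $\log\RadNik{\hat\mu}{\mu}$ by a bounded amount, and $\int_X\big|\log\RadNik{\hat\mu}{\mu}\big|\,\diff\hat\mu=\int_X\big|\RadNik{\hat\mu}{\mu}\log\RadNik{\hat\mu}{\mu}\big|\,\diff\mu<\infty$ because $\RadNik{\hat\mu}{\mu}\in L^\infty$ and $t\mapsto t\log t$ is bounded on bounded subsets of $[0,\infty)$; hence $\log u^\ast\in L^1(X,\hat\mu)$ and, likewise, $\log v^\ast\in L^1(Y,\hat\nu)$.

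The soft parts — lower semicontinuity, strict convexity, and the estimates of part~(\ref{item:EntropicOTBasic:Bounds}) — are routine. The main obstacle is part~(\ref{item:EntropicOTBasic:Scaling}): showing that the optimal plan genuinely factorizes as $u^\ast\otimes v^\ast\cdot\kernel$, equivalently the convergence of the Sinkhorn iteration (or the attainment of a dual maximizer). This is exactly where the two-sided bound $\exp(-\|c\|_\infty/\veps)\leq\sKernel\leq1$ together with $\RadNik{\hat\mu}{\mu},\RadNik{\hat\nu}{\nu}\in L^\infty$ is used essentially: it makes the Sinkhorn map a genuine contraction and pins all iterates, and hence $u^\ast$ and $v^\ast$, between two positive multiples of the marginal densities. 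Without regularization this factorization can fail, so the argument relies on $\veps>0$ rather than merely simplifying under it.
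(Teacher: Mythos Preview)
Your proposal is correct, and part~(\ref{item:EntropicOTBasic:Bounds}) matches the paper almost verbatim. Parts~(\ref{item:EntropicOTBasic:Unique})--(\ref{item:EntropicOTBasic:Marginals}), however, take a genuinely different route. The paper simply outsources these to the literature: existence and uniqueness of the minimizer is quoted from Csisz\'ar's $I$-projection theory \cite{csiszar1975}, and the factorization $\pi^\ast=u^\ast\otimes v^\ast\cdot\kernel$ from R\"uschendorf--Thomsen \cite{RueschendorfThomsen-Schroedinger1993}; the marginal equations~\eqref{eq:ExtremalityCondition} are then read off. Your argument is instead self-contained and constructive: the direct method for~(\ref{item:EntropicOTBasic:Unique}), and for~(\ref{item:EntropicOTBasic:Scaling}) a Sinkhorn fixed point obtained via Birkhoff contraction of the Hilbert projective metric (with Fenchel--Rockafellar duality as a backup). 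This buys you an explicit algorithm and avoids appealing to black-box theorems, at the cost of having to carry the contraction argument from the matrix setting of \cite{FranklinLorenz-Scaling-1989} to integral operators on $L^\infty$---which is doable precisely because the two-sided bound on $\sKernel$ forces all iterates into a set of functions with uniformly bounded oscillation, on which the Hilbert metric is complete. The paper's approach is shorter to write down but less transparent about why the factorization holds; your approach makes the mechanism (and the role of $\veps>0$ and bounded $c$) visible.
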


\begin{definition}[Dual problem]
In addition to \eqref{eq:EntropicOTPrimal}, we will consider a corresponding dual problem which, for the purpose of this article, is best stated as
\begin{equation}
	\label{eq:EntropicOTDual}
	\sup \left\{ J(u,v) \middle| u \in L^\infty_+(X,\mu),\,v \in L^\infty_+(Y,\nu) \right\}
\end{equation}
with $J : L^\infty_+(X,\mu) \times L^\infty_+(Y,\nu) \mapsto [-\infty,\infty)$ given by
\begin{align}
	\label{eq:EntropicOTDualObjective}
	J: (u,v) & \mapsto \int_X \log u\,\diff \hat{\mu} + \int_Y \log v\,\diff \hat{\nu} - \int_{X \times Y} u \otimes v\,\diff \kernel + \|\kernel\|.
\end{align}
\end{definition}
This is a reparametrization of the usual dual problem where one uses $\veps \log u$ and $\veps \log v$ as variables instead (cf.~\cite{SchmitzerScaling2019}).
Again, we briefly collect a few helpful results. A proof is given in the appendix.
\begin{proposition}[Duality for regularized optimal transport] \label{prop:Duality}\hfill
\begin{enumerate}[(i)]
	\item The functional $J$, \eqref{eq:EntropicOTDualObjective}, is well defined.
	\label{item:Duality:Defined}
	\item \label{item:Duality:Gap}
		For $u \in L^\infty_+(X,\mu)$, $\nu \in L^\infty_+(Y,\nu)$, $\pi \in \Pi(\hat{\mu},\hat{\nu})$ one has $J(u,v) \leq \KL(\pi|\kernel)$.
	\item \label{item:Duality:OptimalityCondition} A transport plan $\pi^\ast \in \Pi(\hat{\mu},\hat{\nu})$ of the form $\pi^\ast = u^\ast \otimes v^\ast \cdot \kernel$ for $u^\ast \in L^\infty_+(X,\mu)$, $v^\ast \in L^\infty_+(Y,\nu)$ is optimal for \eqref{eq:EntropicOTPrimal}.
	In that case, $(u^\ast,v^\ast)$ are optimal for \eqref{eq:EntropicOTDual}. Maximizers for \eqref{eq:EntropicOTDual} exist.
\end{enumerate}
\end{proposition}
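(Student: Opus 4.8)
The plan is to establish the three items in order, leaning on the results already collected in Proposition \ref{prop:EntropicOTBasic} and on elementary convex-analytic facts about $\varphi$ and $\varphi^\ast$. For item \eqref{item:Duality:Defined}, I would check that each of the four terms in \eqref{eq:EntropicOTDualObjective} is well defined in $[-\infty,\infty)$. The terms $\int_{X\times Y} u\otimes v\,\diff\kernel$ and $\|\kernel\|$ are finite because $u,v$ are bounded and $0<\sKernel\le 1$, so $\kernel$ is a finite measure. For $\int_X \log u\,\diff\hat\mu$: since $u\in L^\infty_+(X,\mu)$ and $\hat\mu\ll\mu$ with bounded density, $\log u$ is bounded above ($\hat\mu$-a.e.) by $\log\|u\|_\infty$, hence the integral is well defined with value in $[-\infty,\infty)$; same for the $v$-term. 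The only subtlety is the convention when $u=0$ on a set of positive $\hat\mu$-mass, giving value $-\infty$ — this is consistent with $J$ mapping into $[-\infty,\infty)$.

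For item \eqref{item:Duality:Gap}, the key is a pointwise Fenchel–Young inequality. Write $\pi = \sigma\cdot\kernel$ where $\sigma = \RadNik{\pi}{\kernel}$ (if $\pi\not\ll\kernel$ then $\KL(\pi|\kernel)=+\infty$ and there is nothing to prove — note $\kernel$ and $\mu\otimes\nu$ are mutually absolutely continuous since $0<\sKernel\le1$, so $\pi\ll\mu\otimes\nu$ suffices). Using $\varphi(\sigma) \ge \sigma\cdot z - \varphi^\ast(z)$ with the choice $z = \log(u\otimes v) = \log u\oplus\log v$ (interpreting $\log 0 = -\infty$, in which case the inequality still holds since $\varphi(\sigma)\ge -1 = -\varphi^\ast(-\infty)$... more carefully, one restricts to where $u\otimes v>0$ and handles the null part separately), we get
\begin{align*}
\varphi(\sigma(x,y)) \ge \sigma(x,y)\big(\log u(x) + \log v(y)\big) - \big(u(x)v(y) - 1\big)
\end{align*}
$\kernel$-a.e. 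Integrating against $\kernel$ and using $\proj_X\pi = \hat\mu$, $\proj_Y\pi=\hat\nu$ to rewrite $\int \sigma\log u\,\diff\kernel = \int_X\log u\,\diff\hat\mu$ (and similarly for $v$), plus $\int u\otimes v\,\diff\kernel$ and $\int 1\,\diff\kernel = \|\kernel\|$, yields exactly $\KL(\pi|\kernel)\ge J(u,v)$. The bookkeeping with the $\{u\otimes v = 0\}$ set — where necessarily $\pi$ has no mass if $\KL$ is finite, by the extremality structure — is the one place requiring care.

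For item \eqref{item:Duality:OptimalityCondition}, suppose $\pi^\ast = u^\ast\otimes v^\ast\cdot\kernel \in \Pi(\hat\mu,\hat\nu)$. Then equality holds in the Fenchel–Young step above, since $\varphi'(s) = \log s$ gives $\varphi(s) = s\log s - \varphi^\ast(\log s)$ exactly, so $\KL(\pi^\ast|\kernel) = J(u^\ast,v^\ast)$. Combined with item \eqref{item:Duality:Gap}, which shows $J(u,v)\le\KL(\pi|\kernel)$ for all feasible $u,v,\pi$, this immediately gives that $\pi^\ast$ minimizes \eqref{eq:EntropicOTPrimal} and $(u^\ast,v^\ast)$ maximizes \eqref{eq:EntropicOTDual}, and moreover that the optimal values coincide (no duality gap). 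Existence of a maximizer for \eqref{eq:EntropicOTDual} then follows from Proposition \ref{prop:EntropicOTBasic}\eqref{item:EntropicOTBasic:Scaling}–\eqref{item:EntropicOTBasic:Bounds}: the primal minimizer $\pi^\ast$ exists and has the scaling form $u^\ast\otimes v^\ast\cdot\kernel$ with $u^\ast\in L^\infty_+(X,\mu)$, $v^\ast\in L^\infty_+(Y,\nu)$, so this pair is an admissible maximizer. I expect the main obstacle to be the careful treatment of the degenerate cases in item \eqref{item:Duality:Gap} — namely the $\log 0$ conventions and ensuring the marginal substitution $\int\sigma\log u\,\diff\kernel = \int_X\log u\,\diff\hat\mu$ is valid even when $\log u$ is unbounded below — rather than any deep structural difficulty.
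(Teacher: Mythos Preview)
Your proposal is correct and follows essentially the same route as the paper: both arguments check the four terms of $J$ separately for well-definedness, use the pointwise Fenchel--Young inequality $\varphi(\sigma)+\varphi^\ast(z)\ge \sigma z$ with $z=\log u\oplus\log v$ integrated against $\kernel$ for item \eqref{item:Duality:Gap}, and then observe equality in Fenchel--Young when $\sigma=u^\ast\otimes v^\ast$ for item \eqref{item:Duality:OptimalityCondition}, deducing existence of dual maximizers from Proposition \ref{prop:EntropicOTBasic}. The only refinement to note is your handling of the set $\{u\otimes v=0\}$: the reason $\pi$ carries no mass there is not that $\KL$ is finite but that $J(u,v)>-\infty$ forces $u>0$ $\hat\mu$-a.e.\ and $v>0$ $\hat\nu$-a.e., whence $\pi(\{u=0\}\times Y)=\hat\mu(\{u=0\})=0$ (and similarly for $v$) via the marginal constraints---this is exactly what the paper does.
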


\begin{remark}[Sinkhorn algorithm]
\label{rem:Sinkhorn}
Problems \eqref{eq:EntropicOTPrimal} and \eqref{eq:EntropicOTDual} can be solved (approximately) with the Sinkhorn algorithm.
For some initial $\iterz{u} \in L_+^\infty(X,\mu)$ (not identical to zero), it is given for $\ell=0,1,2,\ldots$ by
\begin{align*}
	\iterl{v}(y) & \assign \frac{\RadNik{\hat{\nu}}{\nu}(y)}{\int_X \sKernel(x,y)\,\iterl{u}(x)\,\diff \mu(x)}, &
	\iterll{u}(x) & \assign \frac{\RadNik{\hat{\mu}}{\mu}(x)}{\int_Y \sKernel(x,y)\,\iterl{v}(x)\,\diff \nu(x)}.
\end{align*}
We refer to these two steps as $Y$- and $X$-iteration. It is quickly verified that a $Y$-iteration corresponds to a partial optimization of $J$, \eqref{eq:EntropicOTDualObjective}, over $v$ for fixed $u$. Conversely, an $X$-iteration is obtained by optimizing over $u$ for fixed $v$. Hence the sequence $(\iterl{u},\iterl{v})_\ell$ is a maximizing sequence of \eqref{eq:EntropicOTDualObjective}.
Complementarily, the sequence of measures $\iterl{\pi} \assign (\iterl{u} \otimes \iterl{v}) \cdot \kernel$ converges (under suitable assumptions) to the solution of \eqref{eq:EntropicOTPrimal}. In general one has $\proj_Y \iterl{\pi} = \nu$ but $\proj_X \iterl{\pi} \neq \mu$. Conversely, for $\iter{\pi}{\ell+1/2} \assign (\iterll{u} \otimes \iterl{v}) \cdot \kernel$ one has $\proj_X \iter{\pi}{\ell+1/2} = \mu$ but $\proj_Y \iter{\pi}{\ell+1/2} \neq \nu$.
Again, for a thorough overview we refer to \cite[Chapter 4]{PeyreCuturiCompOT}. A computationally efficient implementation is discussed in \cite{SchmitzerScaling2019}, which we use as a reference in this article.
\end{remark}

\section{Entropic domain decomposition algorithm}
\label{sec:Algorithm}
Throughout this article we will be concerned with solving
\begin{align}
\label{eq:Problem}
\min \left\{ \veps\,\KL(\pi|\kernel) \,\middle|\, \pi \in \Pi(\mu,\nu) \right\}
\end{align}
for $\mu \in \prob(X)$, $\nu \in \prob(Y)$, $c \in L^\infty_{+}(X \times Y,\mu \otimes \nu)$, $\veps>0$, $\sKernel \assign \exp(-c/\veps)$, $\kernel \assign \sKernel \cdot (\mu \otimes \nu)$. By Proposition \ref{prop:EntropicOTBasic}, \eqref{eq:Problem} has a unique minimizer $\pi^\ast$ that can be represented as $\pi^\ast = (u^\ast \otimes v^\ast) \cdot \kernel$ for corresponding scaling factors $u^\ast$, $v^\ast$.

Following Benamou \cite{BenamouPolarDomainDecomposition1994} (see Section \ref{sec:IntroMotivation}), the strategy of the algorithm is as follows:
Divide the space $X$ into two partitions $\partA$ and $\partB$ (which imply two partitions of the product space $X \times Y$) that need to `overlap' in a suitable sense.
Then, starting from an initial feasible coupling $\pi^0 \in \Pi(\mu,\nu)$, optimize the coupling on each of the cells of $\partA$ separately, while keeping the marginals on that cell fixed. This can be done independently and in parallel for each cell and the coupling attains a potentially better score while remaining feasible.
Then repeat this step on partition $\partB$, then again on $\partA$ and so on, continuing to alternate between the two partitions.

To construct $\partA$ and $\partB$ we first define a \emph{basic partition} of small cells. The two overlapping partitions are then created by suitable merging of the basic cells.
This induces a graph over the basic partition cells as vertices. The algorithm converges when this graph is connected, see Section \ref{sec:Convergence} for details.

\begin{definition}[Basic and composite partitions]
	A partition of $X$ into measurable sets $\{X_i\}_{i \in I}$, for some finite index set $I$, is called a basic partition of $(X,\mu)$ if the measures $\mu_i \assign \mu \restr X_i$ for $i \in I$ satisfy $\|\mu_i\|>0$.
	By construction one has $\sum_{i \in I} \mu_i = \mu$.
	In addition we will write $\kernel_i \assign \kernel \restr (X_i \times Y)$ for $i \in I$.
	Often we will refer to a basic partition merely by the index set $I$.

	For a basic partition $\{X_i\}_{i \in I}$ of $(X,\mu)$ a composite partition $\partGeneric$ is a partition of $I$.
	For $J \in \partGeneric$ we will use the following notation:
	\begin{align*}
	X_J & \assign \bigcup_{i \in J} X_i, &
	\mu_J & \assign \sum_{i \in J} \mu_i=\mu \restr X_J, &
	\kernel_J & \assign \sum_{i \in J} \kernel_i = \kernel \restr (X_J \times Y).
	\end{align*}
	Of course, the family $\{X_J\}_{J \in \partGeneric}$ is a measurable partition of $X$ and the families $\{X_i \times Y\}_{i \in I}$ and $\{X_J \times Y\}_{J \in \partGeneric}$ are measurable partitions of $X \times Y$.
\end{definition}

Throughout the article we will use one basic partition $I$ and two corresponding composite partitions $\partA$ and $\partB$.
A formal statement of the algorithm is given in Algorithm \ref{alg:DomDec}.

\begin{example}
\label{ex:ThreeCells}
In the three-cell example from Section \ref{sec:IntroMotivation}, the basic partition is given by $\{X_1,\allowbreak X_2,X_3\}$, $I=\{1,2,3\}$ and the two composite partitions are given by $\partA=\{\{1,2\},\{3\}\}$ and $\partB=\{\{1\},\{2,3\}\}$.
For $\ell$ odd, i.e.~during an $\partA$-iteration, the optimization on the `single cell' $\{3\}$ can be skipped since it is made redundant by the optimization on $\{2,3\}$ in the subsequent $\partB$-iteration: One has $P_Y \iterl{\pi} \restr (X_{3} \times Y))=P_Y \iterlm{\pi} \restr (X_{3} \times Y))$ and thus, when computing the partial marginal for cell $\{2,3\}$ in the next iteration one finds
\begin{multline*}\iterll{\nu_{\{2,3\}}}=P_Y(\iterl{\pi} \restr(X_{\{2,3\}} \times Y))=P_Y(\iterl{\pi} \restr(X_{2} \times Y))+P_Y(\iterl{\pi} \restr(X_{3} \times Y)) \\ =P_Y(\iterl{\pi} \restr(X_{2} \times Y))+P_Y(\iterlm{\pi} \restr(X_{3} \times Y)).
\end{multline*}
The same holds for the cell $\{1\}$ during the $\partB$-iteration.
We then obtain the method described in the introduction.
\end{example}

\begin{algorithmfloat}[ht]	
	\noindent
	\textbf{Input}: initial feasible coupling $\pi^{0} \in \Pi(\mu,\nu)$
	
	\noindent
	\textbf{Output}: a sequence $(\iterl{\pi})_{\ell}$ of feasible couplings in $\Pi(\mu,\nu)$
	\smallskip

	\begin{algorithmic}[1]
		\State $\iterz{\pi} \leftarrow \pi^{0}$
		\State $\ell \leftarrow 0$
		\Loop
		\State $\ell \leftarrow \ell+1$
		\State \algorithmicif\ ($\ell$ is odd)\ \algorithmicthen\ $\iterl{\partGeneric} \leftarrow \partA$\ \algorithmicelse\ $\iterl{\partGeneric} \leftarrow \partB$
		\Comment{select the partition}
		\ForAll{$J \in \iterl{\partGeneric}$}
		\Comment{iterate over each composite cell}
		\State $\iterl{\nu_J} \leftarrow P_Y(\iterlm{\pi} \restr(X_{J} \times Y))$
			\Comment{compute $Y$-marginal on cell}
			\label{line:YMarginal}
		\State $\iterl{\pi_{J}} \leftarrow \arg\min \left\{ \veps\,\KL(\pi|\kernel) \,\middle|\, \pi \in \Pi\left(\mu_J,\iterl{\nu_J}\right)\right\}$ \Comment{minimizer is unique}
			\label{line:CompositePi}
		\EndFor
		\State $\iterl{\pi} \leftarrow \sum_{J \in \iterl{\partGeneric}} \iterl{\pi_{J}}$
		\EndLoop
	\end{algorithmic}
	
	\noindent \textbf{Comment:} The theoretical, idealized algorithm runs indefinitely and thus has no stopping criterion. In practice, the stopping criterion may be a fixed number of iterations or a bound on the decrement of the primal score.
\caption{Entropic domain decomposition}
\label{alg:DomDec}
\end{algorithmfloat}

\begin{proposition}
	Algorithm \ref{alg:DomDec} is well-defined and for all $\ell \in \N_0$ one has $\iterl{\pi} \in \Pi(\mu,\nu)$.
\end{proposition}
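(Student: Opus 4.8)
The plan is to prove the statement by induction on $\ell$, showing at each step that the iterate $\iterl{\pi}$ is a well-defined non-negative measure on $X \times Y$ with the correct marginals $\mu$ and $\nu$. The base case $\ell = 0$ is immediate since $\iterz{\pi} = \pi^0 \in \Pi(\mu,\nu)$ by hypothesis. For the inductive step, assume $\iterlm{\pi} \in \Pi(\mu,\nu)$ and consider one pass through the loop with the selected composite partition $\iterl{\partGeneric}$ (either $\partA$ or $\partB$).

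First I would check that each cell sub-problem in line \ref{line:CompositePi} is well-posed. Fix $J \in \iterl{\partGeneric}$. The measure $\iterl{\nu_J} = P_Y(\iterlm{\pi} \restr (X_J \times Y))$ is a non-negative measure on $Y$, and since $\iterlm{\pi}$ has first marginal $\mu$, one has $\|\iterl{\nu_J}\| = \iterlm{\pi}(X_J \times Y) = \mu(X_J) = \|\mu_J\| > 0$ (the last inequality because $\{X_i\}_{i\in I}$ is a basic partition, so each $\|\mu_i\| > 0$ and $J \neq \emptyset$). Hence $\|\mu_J\| = \|\iterl{\nu_J}\| > 0$, so $\Pi(\mu_J, \iterl{\nu_J})$ is non-empty. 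I also need the regularity hypotheses of the entropic OT problem to apply: $\mu_J = \mu \restr X_J \ll \mu$ with $\tfrac{\diff \mu_J}{\diff \mu} = \mathbbm{1}_{X_J} \in L^\infty(X,\mu)$, and similarly $\iterl{\nu_J} \ll \nu$ with bounded density. The density of $\iterl{\nu_J}$ with respect to $\nu$ is bounded because $\iterlm{\pi} \leq$ (some constant times) $\kernel$ need not hold in general — instead I would argue that $\iterl{\nu_J} = P_Y(\iterlm{\pi}\restr(X_J\times Y)) \leq P_Y \iterlm{\pi} = \nu$, so the density is bounded by $1$. With these in place, Proposition \ref{prop:EntropicOTBasic}\eqref{item:EntropicOTBasic:Unique} gives a unique minimizer $\iterl{\pi_J} \in \Pi(\mu_J, \iterl{\nu_J})$; in particular it is a non-negative measure supported on $X_J \times Y$ with $P_X \iterl{\pi_J} = \mu_J$ and $P_Y \iterl{\pi_J} = \iterl{\nu_J}$.

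Next I would assemble $\iterl{\pi} = \sum_{J \in \iterl{\partGeneric}} \iterl{\pi_J}$ and verify feasibility. Since $\{X_J\}_{J \in \iterl{\partGeneric}}$ is a measurable partition of $X$ and each $\iterl{\pi_J}$ lives on $X_J \times Y$, the sum is a non-negative measure on $X \times Y$, and for the $X$-marginal, $P_X \iterl{\pi} = \sum_J P_X \iterl{\pi_J} = \sum_J \mu_J = \mu$. For the $Y$-marginal, $P_Y \iterl{\pi} = \sum_J P_Y \iterl{\pi_J} = \sum_J \iterl{\nu_J} = \sum_J P_Y(\iterlm{\pi}\restr(X_J\times Y)) = P_Y \iterlm{\pi} = \nu$, where the last step uses that the sets $X_J \times Y$ partition $X \times Y$ together with the inductive hypothesis $\proj_Y \iterlm{\pi} = \nu$. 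Hence $\iterl{\pi} \in \Pi(\mu,\nu)$, completing the induction.

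The main obstacle, and the only point requiring genuine care rather than bookkeeping, is verifying that the regularity assumptions required to invoke Proposition \ref{prop:EntropicOTBasic} on each sub-problem are actually satisfied along the whole run of the algorithm — specifically that the partial marginal $\iterl{\nu_J}$ always has an $L^\infty$ density with respect to $\nu$ (so that the cell problem is a legitimate instance of entropic OT regularized against $\kernel_J = \kernel \restr (X_J \times Y)$). The clean way to handle this is the observation above that $\iterl{\nu_J} \leq \nu$ as measures, which holds at every iteration because $\iterlm{\pi}$ is feasible; this also implicitly uses that the cell kernel $\kernel_J$ inherits the two-sided bounds $\exp(-\|c\|_\infty/\veps) \leq \sKernel \leq 1$ on $X_J \times Y$, so the constants in Proposition \ref{prop:EntropicOTBasic} are uniform over cells. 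Everything else is routine marginal arithmetic.
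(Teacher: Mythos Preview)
Your proof is correct and follows essentially the same approach as the paper: induction on $\ell$, verifying that the cell sub-problems satisfy the hypotheses of Proposition \ref{prop:EntropicOTBasic} via the observations $\|\iterl{\nu_J}\|=\|\mu_J\|>0$ and $0 \leq \mu_J \leq \mu$, $0 \leq \iterl{\nu_J} \leq \nu$ (so the densities lie in $[0,1]$), then reassembling and checking both marginals. The paper's argument is slightly terser but structurally identical.
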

\begin{proof}
	$\iterz{\pi} \in \Pi(\mu,\nu)$ by assumption.
	Assume now $\iterlm{\pi} \in \Pi(\mu,\nu)$ for some $\ell \geq 1$.
	Since
	$$\mu_J=\mu \restr X_J = (\proj_X \iterlm{\pi}) \restr X_J = \proj_X (\iterlm{\pi} \restr (X_J \times Y))$$
	we find that $\|\iterl{\nu}_J\|=\|\mu_J\|>0$ (for $\iterl{\nu}_J$ as introduced in line \ref*{line:YMarginal}). Also, since $0 \leq \mu_J \leq \mu$ and $0 \leq \iterl{\nu}_J\leq \nu$, the values of $\RadNik{\iterl{\nu}_J}{\nu}$ and $\RadNik{\iterl{\mu}_J}{\mu}$ are (up to negligible sets) contained in $[0,1]$. Therefore, by Proposition \ref{prop:EntropicOTBasic}, the regularized optimal transport problem in line \ref*{line:CompositePi} has a unique solution and thus $\iterl{\pi}_J$ is well-defined.
	We verify
	\begin{align*}
		\proj_X \iterl{\pi} & =\sum_{J \in \iterl{\partGeneric}} \proj_X \iterl{\pi}_J = \sum_{J \in \iterl{\partGeneric}} \mu_J = \mu, \\
		\proj_Y \iterl{\pi} & =\sum_{J \in \iterl{\partGeneric}} \proj_Y \iterl{\pi}_J = \sum_{J \in \iterl{\partGeneric}} \iterl{\nu}_J
		= \sum_{J \in \iterl{\partGeneric}} \proj_Y (\iterlm{\pi} \restr (X_J \times Y))
		= \proj_Y \iterlm{\pi} = \nu.
	\end{align*}
	Therefore, $\iterl{\pi} \in \Pi(\mu,\nu)$ and the claim follows by induction.
\end{proof}

\begin{definition}[Notation of iterates]
\label{def:Notation}
For $\ell\geq 1$ the following sequences will be used throughout the rest of this article:
\begin{itemize}
	\item Composite partitions $\iterl{\partGeneric} \assign \partA$ if $\ell$ is odd, $\partB$ else, as defined in Algorithm \ref{alg:DomDec}.
	\item Primal iterates $\iterl{\pi}$, as defined in Algorithm \ref{alg:DomDec}.
	\item Partial primal iterates and corresponding $Y$-marginals on the composite partition cells,
	\begin{align*}
		\iterl{\pi}_J & \assign \iterl{\pi} \restr (X_J \times Y), &
		\iterl{\nu}_J & \assign \proj_Y \iterl{\pi}_J
	\end{align*}
	for $J \in \iterl{\partGeneric}$, consistent with their definition in Algorithm \ref{alg:DomDec}.
	Let $\iterl{u}_J$, $\iterl{v}_J$ be corresponding scaling factors such that $\iterl{\pi}_J = \iterl{u}_J \otimes \iterl{v}_J \cdot \kernel$. Their existence is provided by Proposition \ref{prop:EntropicOTBasic}.
	\item Partial primal iterates and corresponding $Y$-marginals on the basic partition cells,
	\begin{align*}	
		\iterl{\pi}_i & \assign \iterl{\pi} \restr (X_i \times Y), &
		\iterl{\nu}_i & \assign \proj_Y \iterl{\pi}_i,
	\end{align*}
	for $i \in I$. It is easy to verify that for
	\begin{align}
		\label{eq:BasicScalings}
		\iterl{u}_i(x) & \assign \begin{cases} \iterl{u}_J(x) & \tn{if } x \in X_i, \\
			0 & \tn{else,}
			\end{cases}, &
		\iterl{v}_i(y) & \assign \iterl{v}_J(y)
	\end{align}
	where $J \in \iterl{\partGeneric}$ is uniquely determined by the condition $i \in J$,
	one finds $\iterl{\pi}_i = \iterl{u}_i \otimes \iterl{v}_i \cdot \kernel$ and therefore, by Propositions \ref{prop:EntropicOTBasic} and \ref{prop:Duality}, $\iterl{\pi}_i$ is the unique optimal entropic coupling between $\mu_i$ and $\iterl{\nu}_i$. This also follows from a restriction argument as in \cite[Theorem 4.6]{Villani-OptimalTransport-09}.
\end{itemize}
\end{definition}

Based on Proposition \ref{prop:EntropicOTBasic}, partial iterates satisfy locally some corresponding bounds, which we collect in the following Lemma, the proof of which is postponed to the Appendix.
\begin{lemma}[Boundedness of scaling factors and density bounds]
	\label{lem:ScalingLemma}
	Let $\ell > 1$, $i \in I$ and $J \in \iterl{\partGeneric}$ such that $i \in J$.
	\begin{enumerate}[(i)]
		\item
		\label{item:ScalingLemma:Bounds}
		There exist $\bar{C}, \underbar{C} > 0$, possibly depending on $\ell$ and $J$, such that
		\[
		\left\{
		\begin{aligned}
		\underbar{C} &\leq \iterl{u}_J(x) \leq \bar{C} \quad \text{for } \mu_J\text{-a.e.~} x \in X_J \\
		\underbar{C} \cdot \RadNik{\iterl{\nu_J}}{\nu}(y) &\leq \iterl{v}_J(y) \leq \bar{C} \quad \text{for } \nu\text{-a.e.~} y \in Y
		\end{aligned}
		\right.
		\]
		and, in particular, $\log \iterl{u}_J \in L^1(X, \mu_J)$ and $\log \iterl{v}_J \in L^1(Y, \iterl{\nu}_J)$.
		\item
		\label{item:ScalingLemma:Ratio}
		The ratios of the $\iterl{u}_J$-scaling factor are bounded,
		\begin{equation}\label{eq:ScalingRatioBound}
		\frac{\iterl{u_J}(x_1)}{\iterl{u_J}(x_2)} \leq \exp(\|c\|/\veps) \quad \text{for $\mu_J$-a.e.~$x_1, x_2 \in X_J$}.
		\end{equation}
		\item
		\label{item:ScalingLemma:Basic}
		Appropriate versions of \eqref{item:ScalingLemma:Bounds} and \eqref{item:ScalingLemma:Ratio} hold for $\iterl{u}_i$, $\iterl{v}_i$ via their definition, \eqref{eq:BasicScalings}.
		\item \label{item:DensityAbsBound:Abs}
		The product of the scaling factors $\iterl{u}_i$, $\iterl{v}_i$ is locally controlled by
		\begin{align}
		\label{eq:DensityAbsBound}
		\RadNik{\iterl{\nu}_i}{\nu}(y) \cdot \frac{\exp(-\|c\|/\veps)}{\|\mu_i\|} \leq \iterl{u}_i(x) \cdot \iterl{v}_i(y) & \leq \frac{\exp(2\|c\|/\veps)}{\|\mu_i\|}
		\end{align}
		for $\mu_i$-a.e.~$x \in X_i$ and $\nu$-a.e.~$y \in Y$.
		\item \label{item:DensityAbsBound:Rel}
		For any $j \in J$, one has
		\begin{align}\label{eq:DensityBound}
		\iterl{u}_{i}(x) \cdot \iterl{v}_{i}(y)
		\geq \exp(-3\|c\|/\veps)\,\|\mu_{j}\|
		\cdot
		\iterlm{u}_{j}(x') \cdot \iterlm{v}_{j}(y)
		\end{align}
		for $(\mu_i \otimes \mu_{j} \otimes \nu)$-a.e.~$(x,x',y) \in X_i \times X_j \times Y$, and
		\begin{equation}\label{eq:RadNikLowerBound1Step}
		\RadNik{\iterl{\nu}_i}{\nu}(y) \geq \exp(-2\|c\|/\veps)\,\|\mu_i\| \cdot \RadNik{\iterlm{\nu}_{j}}{\nu}(y) \quad \text{for $\nu$-a.e.~$y \in Y$.}
		\end{equation}
		\end{enumerate}
\end{lemma}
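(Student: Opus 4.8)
The plan is to reduce all five assertions to Proposition~\ref{prop:EntropicOTBasic}, applied to the entropic sub-problems solved inside Algorithm~\ref{alg:DomDec}, together with one elementary identity relating consecutive iterates.

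First I would note that the sub-problem on a composite cell $J \in \iterl{\partGeneric}$ solved in line~\ref{line:CompositePi} at iteration $\ell$ is an instance of entropic optimal transport in the sense of Proposition~\ref{prop:EntropicOTBasic}, with $\hat\mu = \mu_J$, $\hat\nu = \iterl{\nu}_J$ and the same reference measures $\mu,\nu$ and kernel $\kernel$ as in \eqref{eq:Problem}: one has $\mu_J \ll \mu$ and $\iterl{\nu}_J \ll \nu$ with densities valued in $[0,1]$, $\|\mu_J\| = \|\iterl{\nu}_J\| > 0$, and since any competitor is concentrated on $X_J\times Y$ its $\KL$-divergence against $\kernel$ coincides with the one against $\kernel_J$; hence $(\iterl{u}_J,\iterl{v}_J)$ play the role of the optimal scalings $(u^\ast,v^\ast)$ for this sub-problem. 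The point that makes the constants clean is that $\RadNik{\mu_J}{\mu} = 1$ holds $\mu$-a.e.\ on $X_J$, so the density prefactors in \eqref{eq:uBound}--\eqref{eq:vBound} degenerate to genuine constants on $X_J$. I will also freely use $0 < \exp(-\|c\|/\veps) \le \sKernel \le 1$ and $\RadNik{\iterl{\nu}_i}{\nu} \le \RadNik{\iterl{\nu}_J}{\nu} \le 1$ (from $\iterl{\nu}_i \le \iterl{\nu}_J \le \nu$).

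Items \eqref{item:ScalingLemma:Bounds}--\eqref{item:ScalingLemma:Basic} are then essentially read off. For \eqref{item:ScalingLemma:Bounds}, \eqref{eq:uBound} restricted to $X_J$ gives $\|\iterl{v}_J\|_1^{-1} \le \iterl{u}_J \le \exp(\|c\|/\veps)\,\|\iterl{v}_J\|_1^{-1}$, while \eqref{eq:vBound} together with $\RadNik{\iterl{\nu}_J}{\nu}\le 1$ gives $\|\iterl{u}_J\|_1^{-1}\,\RadNik{\iterl{\nu}_J}{\nu}(y) \le \iterl{v}_J(y) \le \exp(\|c\|/\veps)\,\|\iterl{u}_J\|_1^{-1}$; choosing $\underbar C$, $\bar C$ as the appropriate minimum and scaled maximum of $\|\iterl{u}_J\|_1^{-1}$, $\|\iterl{v}_J\|_1^{-1}$ — finite and positive by Proposition~\ref{prop:EntropicOTBasic}\eqref{item:EntropicOTBasic:Bounds} — closes the case, and the $L^1$-integrability of $\log\iterl{u}_J$, $\log\iterl{v}_J$ is the last assertion of \eqref{item:EntropicOTBasic:Bounds}. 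For \eqref{item:ScalingLemma:Ratio}, the extremality condition \eqref{eq:ExtremalityCondition} on $X_J$ reads $\iterl{u}_J(x)\int_Y \sKernel(x,y')\,\iterl{v}_J(y')\,\diff\nu(y') = 1$, so $\iterl{u}_J(x_1)/\iterl{u}_J(x_2)$ is a quotient of two such integrals and is bounded by $\exp(\|c\|/\veps)$ from the bounds on $\sKernel$. Item \eqref{item:ScalingLemma:Basic} is immediate from \eqref{eq:BasicScalings}, since $\iterl{u}_i = \iterl{u}_J$ and $\iterl{v}_i = \iterl{v}_J$ on $X_i\subseteq X_J$ and $\RadNik{\iterl{\nu}_i}{\nu}\le\RadNik{\iterl{\nu}_J}{\nu}$.

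For \eqref{item:DensityAbsBound:Abs} note that $\iterl{u}_i(x)\iterl{v}_i(y) = \iterl{u}_J(x)\iterl{v}_J(y)$ for $x\in X_i$: the upper bound follows from \eqref{eq:uBound}--\eqref{eq:vBound} combined with $\|\iterl{u}_J\|_1\|\iterl{v}_J\|_1 \ge \|\mu_J\| \ge \|\mu_i\|$ (integrate the lower bound in \eqref{eq:uBound} over $X_J$) and $\RadNik{\iterl{\nu}_J}{\nu}\le 1$, and the lower bound from writing $\RadNik{\iterl{\nu}_i}{\nu}(y) = \iterl{v}_J(y)\int_{X_i}\iterl{u}_J(x')\,\sKernel(x',y)\,\diff\mu(x')$ (since $\iterl{\pi}_i = (\iterl{u}_J\restr X_i)\otimes\iterl{v}_J\cdot\kernel$), bounding $\iterl{u}_J(x')\le \exp(\|c\|/\veps)\iterl{u}_J(x)$ via \eqref{eq:ScalingRatioBound} and $\sKernel\le 1$, and rearranging. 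Finally, \eqref{item:DensityAbsBound:Rel} hinges on the identity $\iterl{\nu}_J = \proj_Y(\iterlm{\pi}\restr(X_J\times Y)) = \sum_{i'\in J}\iterlm{\nu}_{i'} \ge \iterlm{\nu}_j$, hence $\RadNik{\iterl{\nu}_J}{\nu} \ge \RadNik{\iterlm{\nu}_j}{\nu}$. Using both extremality relations \eqref{eq:ExtremalityCondition} at level $\ell$ — so that the normalization $\|\iterl{v}_J\|_1$ cancels — one obtains on $X_i$ the estimates $\iterl{u}_i(x)\iterl{v}_i(y) \ge \exp(-\|c\|/\veps)\,\RadNik{\iterl{\nu}_J}{\nu}(y)$ and $\RadNik{\iterl{\nu}_i}{\nu}(y) \ge \exp(-2\|c\|/\veps)\,\|\mu_i\|\,\RadNik{\iterl{\nu}_J}{\nu}(y)$, the second of which already gives \eqref{eq:RadNikLowerBound1Step} after $\RadNik{\iterl{\nu}_J}{\nu}\ge\RadNik{\iterlm{\nu}_j}{\nu}$. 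For \eqref{eq:DensityBound} one combines the first estimate with a lower bound on $\RadNik{\iterlm{\nu}_j}{\nu}$: at level $\ell-1\ge 1$ the cell $j$ sits in some composite cell of $\iterlm{\partGeneric}$ whose partial plan is an optimal entropic coupling, so the ratio bound \eqref{eq:ScalingRatioBound} (whose proof uses only optimality of the partial plan) applies to $\iterlm{u}_j$ on $X_j$, and writing $\RadNik{\iterlm{\nu}_j}{\nu}(y) = \iterlm{v}_j(y)\int_{X_j}\iterlm{u}_j(x'')\,\sKernel(x'',y)\,\diff\mu(x'')$ and bounding $\iterlm{u}_j(x'')\ge\exp(-\|c\|/\veps)\iterlm{u}_j(x')$, $\sKernel\ge\exp(-\|c\|/\veps)$ gives $\RadNik{\iterlm{\nu}_j}{\nu}(y) \ge \exp(-2\|c\|/\veps)\,\|\mu_j\|\,\iterlm{u}_j(x')\iterlm{v}_j(y)$; chaining the two estimates produces the factor $\exp(-3\|c\|/\veps)\,\|\mu_j\|$. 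No step is conceptually hard; the only delicate point is the bookkeeping of the powers of $\exp(\|c\|/\veps)$, which forces one to go through the extremality relations \eqref{eq:ExtremalityCondition} (and exploit the cancellation of $\|\iterl{v}_J\|_1$) rather than the coarser two-sided bounds \eqref{eq:uBound}--\eqref{eq:vBound} at every step, so that the constants come out exactly as stated.
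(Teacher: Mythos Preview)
Your proposal is correct and follows essentially the same route as the paper: reduce everything to Proposition~\ref{prop:EntropicOTBasic} applied to the cell sub-problems, exploit $\RadNik{\mu_J}{\mu}=1$ on $X_J$, use the extremality relations \eqref{eq:ExtremalityCondition} together with the ratio bound \eqref{eq:ScalingRatioBound}, and for part \eqref{item:DensityAbsBound:Rel} use the key inequality $\iterl{\nu}_J \ge \iterlm{\nu}_j$. The only cosmetic difference is your upper bound in \eqref{item:DensityAbsBound:Abs}, which you obtain from the product of \eqref{eq:uBound}--\eqref{eq:vBound} via $\|\iterl{u}_J\|_1\|\iterl{v}_J\|_1 \ge \|\mu_J\| \ge \|\mu_i\|$, whereas the paper runs the same ratio-bound argument through the second extremality relation on the basic cell $i$; both yield the identical constant.
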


To provide some intuition, we now sketch a simple proof that Algorithm \ref{alg:DomDec} converges to the globally optimal solution. For simplicity, we restrict it to the three-cell problem and discrete (and finite) spaces $X$ and $Y$. While it could be extended to the general setting, we instead refer to Section \ref{sec:Convergence}.

\begin{proposition}
	\label{prop:SimpleConvergence}
	Let $X$ and $Y$ be finite sets. Let $\{X_1,X_2,X_3\}$, $I=\{1,2,3\}$ and $\partA=\{\{1,2\},\{3\}\}$ and $\partB=\{\{1\},\{2,3\}\}$ be a basic and two composite partitions of $X$. Then Algorithm \ref{alg:DomDec} converges to the globally optimal solution of problem \eqref{eq:Problem}.
\end{proposition}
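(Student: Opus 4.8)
The plan is to exploit the fact that the domain decomposition iteration only ever \emph{decreases} the primal objective $\veps\,\KL(\iterl{\pi}|\kernel)$, and then to characterize the fixed points of the iteration. First I would establish monotonicity: since $\iterl{\pi}_J$ is the \emph{optimal} entropic coupling between $\mu_J$ and $\iterl{\nu}_J$, and $\iterlm{\pi} \restr (X_J \times Y)$ is a \emph{feasible} competitor in $\Pi(\mu_J, \iterl{\nu}_J)$, one has $\KL(\iterl{\pi}_J|\kernel_J) \leq \KL(\iterlm{\pi} \restr (X_J \times Y)|\kernel_J)$; summing over $J \in \iterl{\partGeneric}$ and using the additivity of $\KL$ over the disjoint product partition $\{X_J \times Y\}$ gives $\KL(\iterl{\pi}|\kernel) \leq \KL(\iterlm{\pi}|\kernel)$. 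So the sequence of objective values is non-increasing and bounded below, hence convergent.

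Next I would use compactness. Since $X$ and $Y$ are finite, $\Pi(\mu,\nu)$ is a compact subset of a finite-dimensional space, so $(\iterl{\pi})_\ell$ has a convergent subsequence with some limit $\bar\pi \in \Pi(\mu,\nu)$. I would argue along this subsequence (passing to a further subsequence so that the partition label is fixed, say always $\partA$ at the relevant steps, using a parity argument or pigeonhole) that the limit is a fixed point of a full $\partA$-then-$\partB$ double step: the map $\pi \mapsto$ (one iteration) is continuous on the finite-dimensional simplex (continuity of the entropic OT solver in the marginals follows from Proposition~\ref{prop:EntropicOTBasic} and standard stability of the unique minimizer), and since consecutive objective values converge to the same limit, the limit $\bar\pi$ must satisfy $\KL(\bar\pi|\kernel) = \KL(T_{\partA}\bar\pi|\kernel) = \KL(T_{\partB} T_{\partA}\bar\pi|\kernel)$, forcing $\bar\pi$ to be stationary under re-solving on every composite cell of both $\partA$ and $\partB$.

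It then remains to show that any such stationary $\bar\pi$ is the global optimizer $\pi^\ast$. Here I would use the scaling-factor representation: stationarity on each composite cell $J$ means $\bar\pi_J = \bar u_J \otimes \bar v_J \cdot \kernel_J$ for scaling factors, and by Definition~\ref{def:Notation} this induces basic-cell scalings $\bar u_i$, $\bar v_i$ with $\bar\pi_i = \bar u_i \otimes \bar v_i \cdot \kernel$. The key point is that whenever two basic cells $i, i'$ lie in a common composite cell (in $\partA$ \emph{or} in $\partB$), stationarity forces their $Y$-scaling factors $\bar v_i$, $\bar v_{i'}$ to agree up to the \emph{same} global constant --- because within one entropic subproblem the $Y$-scaling is unique up to a single scalar, and the $X$-scalings on the two sub-blocks must then be mutually consistent. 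Since $\{X_1\cup X_2, X_3\}$ and $\{X_1, X_2\cup X_3\}$ together connect all three basic cells (cell $2$ overlaps both $1$ and $3$ — this is exactly where $\mu(X_2)>0$ enters, so that $X_2$ is a genuine cell of the basic partition), one can propagate a single pair of global scaling functions $u^\ast$ on $X$ and $v^\ast$ on $Y$ with $\bar\pi = u^\ast \otimes v^\ast \cdot \kernel$. By Proposition~\ref{prop:Duality}\eqref{item:Duality:OptimalityCondition}, such a plan is the unique global optimizer, so $\bar\pi = \pi^\ast$. Finally, since every subsequential limit equals $\pi^\ast$ and the space is compact, the whole sequence converges to $\pi^\ast$.

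The main obstacle I expect is the gluing step: carefully arguing that stationarity on the \emph{overlapping} composite cells forces the local scaling factors to be restrictions of a single global pair $(u^\ast, v^\ast)$, rather than merely each being consistent within its own cell. One has to track the scalar ambiguity in each cell's factorization and show the overlaps pin down all the scalars simultaneously; the connectivity of the two composite partitions (which in the three-cell case reduces to the single observation that $X_2$ sits in a composite cell of each partition alongside $X_1$ and $X_3$ respectively) is exactly what makes this work, and it is the place where the hypothesis on the partition structure is used.
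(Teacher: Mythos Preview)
Your proposal is correct and follows essentially the same approach as the paper's own proof sketch: monotonicity of the $\KL$ objective, compactness in finite dimensions to extract a cluster point, continuity of the iteration map to show the cluster point is fixed under both $\partA$- and $\partB$-updates, and then the gluing of scaling factors over the shared cell $X_2$ to obtain a global pair $(u,v)$ with $\bar\pi = u \otimes v \cdot \kernel$, whence optimality by Proposition~\ref{prop:Duality}\eqref{item:Duality:OptimalityCondition}. The paper makes the continuity of the solving map slightly more explicit (via the uniform bounds \eqref{eq:uBound}--\eqref{eq:vBound} on scaling factors and a cluster-point argument), but otherwise the structure is the same.
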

\begin{proof}[Sketch]
When $X$ and $Y$ are discrete, the spaces of measures and measurable functions all become finite-dimensional real vector spaces.
For simplicity, we may assume that $\mu$ and $\nu$ assign strictly positive mass to every point in $X$ and $Y$. Then $\kernel$ assigns strictly positive mass to every point in $X \times Y$ and therefore $\pi \ll \kernel$ for any $\pi \in \measp(X \times Y)$. The function $\KL(\cdot|\kernel)$ is therefore finite and continuous on $\measp(X \times Y)$.
Denote by $S$ the `solving' map $\measp(X \times Y) \ni \pi \mapsto \arg\min \{ \veps\,\KL(\hat{\pi}|\kernel) | \hat{\pi} \in \Pi(\proj_X \pi,\proj_Y \pi)\}$ which is well-defined.

We show that $S$ is continuous: Let $(\pi_n)_n$ be a sequence of measures in $\measp(X \times Y)$ converging to $\pi_\infty$ (this implies that the entries of $\pi_n$ are uniformly bounded), then $S(\pi_n)=u_n \otimes v_n \cdot \kernel$ for suitable $u_n$, $v_n$ by Proposition \ref{prop:EntropicOTBasic}.
By re-scaling, with (\ref{eq:uBound},\ref{eq:vBound}) we may assume that $u_n$ and $v_n$ are uniformly bounded and by compactness in finite dimensions, we may extract some cluster points $u_\infty$ and $v_\infty$.
Since $u_n \otimes v_n \cdot \kernel \in \Pi(\proj_X \pi_n, \proj_Y \pi_n)$, one finds that $u_\infty \otimes v_\infty \cdot \kernel \in \Pi(\proj_X \pi_\infty,\proj_Y \pi_\infty)\}$, which must be equal to $S(\pi_\infty)$ by Proposition \ref{prop:Duality} \eqref{item:Duality:OptimalityCondition}. Therefore, $S(\pi_n)$ converges to $S(\pi_\infty)$ and $S$ is continuous.

Let now $F_A$ be the map that takes an iterate $\iterlm{\pi}$ to $\iterl{\pi}$ when $\ell$ is odd, and $F_B$ the map for $\ell$ even.
In the discrete setting, the restriction of a measure to a subset is a continuous map, and since $F_A$ and $F_B$ are built from measure restriction and the solving map $S$, they are continuous.

Consider now the sequence of odd-numbered iterates $(\iter{\pi}{2\ell+1})_\ell$. In finite dimensions we may extract a cluster point $\pi^\ast$. By continuity of $F_A$ and $F_B$, $F_A(F_B(\pi^\ast))$ must also be a cluster point of $(\iter{\pi}{2\ell+1})_\ell$. Since the sequence $(\KL(\iterl{\pi}|\kernel))_\ell$ is non-increasing (and $\KL(\cdot|\kernel)$ is continuous here), we must have $\KL(\pi^\ast|\kernel)=\KL(F_A(F_B(\pi^\ast))|\kernel)$.
But $F_A(F_B(\pi^\ast)) \neq \pi^\ast$ would imply that the $\KL$ divergence has been strictly decreased, so we must have that $F_A(F_B(\pi^\ast))=F_B(\pi^\ast)=\pi^\ast$, i.e.~$\pi^\ast$ is partially optimal on all the cells of the composite partitions $\partA$ and $\partB$. So there are scaling factors $(u_{\{1,2\}},v_{\{1,2\}})$, $(u_{\{3\}},v_{\{3\}})$ and $(u_{\{1\}},v_{\{1\}})$, $(u_{\{2,3\}},v_{\{2,3\}})$ such that we can write
\begin{align*}
	\pi^\ast
	= u_{\{1,2\}} \otimes v_{\{1,2\}} \cdot \kernel_{\{1,2\}} + u_{\{3\}} \otimes v_{\{3\}} \cdot \kernel_{\{3\}}
	= u_{\{1\}} \otimes v_{\{1\}} \cdot \kernel_{\{1\}} +  u_{\{2,3\}} \otimes v_{\{2,3\}} \cdot \kernel_{\{2,3\}}.
\end{align*}
We find that on $X_2 \times Y$ the functions $u_{\{1,2\}} \otimes v_{\{1,2\}}$ and $u_{\{2,3\}} \otimes v_{\{2,3\}}$ must coincide $\kernel$-a.e., and by re-scaling we may thus assume that $u_{\{1,2\}}=u_{\{2,3\}}$ on $X_2$ $\mu$-a.e., and $v_{\{1,2\}}=v_{\{2,3\}}$ on $Y$ $\nu$-a.e.~and therefore we can `glue' together $u_{\{1,2\}}$ and $u_{\{2,3\}}$ to get some $u$ and set $v=v_{\{1,2\}}=v_{\{2,3\}}$ such that $\pi^\ast=u \otimes v \cdot \kernel$, which must therefore be globally optimal by Proposition \ref{prop:Duality} \eqref{item:Duality:OptimalityCondition}.
Since this holds for any cluster point $\pi^\ast$, $(\iterl{\pi})_\ell$ converges to this point.
\end{proof}
\section{Linear convergence}
\label{sec:Convergence}
In this Section we prove linear convergence of Algorithm \ref{alg:DomDec} to the globally optimal solution of \eqref{eq:Problem} with respect to the Kullback--Leibler divergence.
We start with the proof for the three-cell setup in Section \ref{sec:ConvergenceSimple} and finally the proof for general decompositions in Section \ref{sec:ConvergenceGeneral}.
Numerical worst-case examples that demonstrate the qualitative accuracy of our convergence rate bounds are presented in Section \ref{sec:NumericsWorstCase}.

\begin{definition}[Primal suboptimality]
	For $\pi \in \Pi(\mu,\nu)$ the (scaled) suboptimality of $\pi$ for problem \eqref{eq:Problem} is denoted by
	\begin{align}
	\label{eq:Suboptimality}
	\Delta(\pi) & \assign \KL(\pi|\kernel)-\KL(\pi^\ast|\kernel)
	\end{align}
	where we recall that $\pi^\ast$ denotes the unique minimizer.
\end{definition}

\begin{remark}[Proof strategy]
	\label{rem:ProofStrategyThree}
	The basic strategy for the proofs is inspired by \cite{LuTs92} where linear convergence for block coordinate descent methods is shown. Indeed, the domain decomposition algorithm can be interpreted as a block coordinate descent for the entropic optimal transport problem, where in each iteration we add some additional artificial constraints that fix the $Y$-marginals of the current candidate coupling on each of the composite cells. The fundamental strategy of \cite{LuTs92} is to find a bound $\Delta(\iterl{\pi}) \leq C \left( \Delta(\iter{\pi}{\ell-k})-\Delta(\iter{\pi}{\ell})\right)$ for some $C \in \R_+$ where $k=1$ in the three-cell setting and $k>1$ in the general setting. This then quickly implies linear convergence of $\Delta(\iterl{\pi})$ to $0$.
	In this article, we exploit the identity $\Delta(\iter{\pi}{\ell-k})-\Delta(\iter{\pi}{\ell})=\KL(\iter{\pi}{\ell-k}|\iter{\pi}{\ell})$ (Lemma \ref{lem:SubOptKL}) and we use a primal-dual gap estimate (Proposition \ref{prop:Duality} \eqref{item:Duality:Gap}) to bound $\Delta(\iterl{\pi}) \leq \KL(\iterl{\pi}|\hat{\pi})$ for a suitable $\hat{\pi}$ of the form $\hat{u} \otimes \hat{v} \cdot \kernel$ (Lemma \ref{lem:PDGap}) and then use carefully chosen $\hat{u}$ and $\hat{v}$ to control the latter by the former.

	A major challenge is that Algorithm \ref{alg:DomDec} does not produce any `full' dual iterates. We may only use the dual variables for the composite cell problems in line \ref{line:CompositePi}, but they are not necessarily consistent across multiple cells. Inspired by the proof of Proposition \ref{prop:SimpleConvergence} we propose a way to approximately `glue' them together to obtain a global candidate.
\end{remark}

\subsection{Three cells}
\label{sec:ConvergenceSimple}
We first state the main result of this Section.
\begin{theorem}[Linear convergence for three-cell decomposition]
	\label{thm:ThreeConvergence}
	Let $(X_1,X_2,\allowbreak X_3)$, $I=\{1,2,3\}$, be a basic partition of $(X,\mu)$ into three cells and set $\partA=\{\{1,2\},\{3\}\}$, $\partB=\{\{1\},\{2,3\}\}$.
	Then, for iterates of Algorithm \ref{alg:DomDec} one has for $\ell > 1$,
	\begin{align}
	\label{eq:ThreeConvergenceKLDecrement}
	\Delta(\iterl{\pi}) \leq \left(1+\exp\left(-\tfrac{2\|c\|}{\veps}\right) \tfrac{\|\mu_2\|}{\|\mu_{\{1,3\}}\|}\right)^{-1}
	\cdot \Delta(\iterlm{\pi}).
	\end{align}
	In particular, the domain decomposition algorithm converges to the optimal solution.
\end{theorem}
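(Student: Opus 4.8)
The plan is to follow the block-coordinate-descent strategy from Remark \ref{rem:ProofStrategyThree}, which splits into three ingredients that I expect to be stated as separate lemmas: (a) the telescoping identity $\Delta(\iterlm{\pi}) - \Delta(\iterl{\pi}) = \veps\,\KL(\iterlm{\pi}|\iterl{\pi})$ (Lemma \ref{lem:SubOptKL}); (b) the primal-dual gap bound $\Delta(\iterl{\pi}) \leq \veps\,\KL(\iterl{\pi}|\hat{\pi})$ for any $\hat{\pi} = \hat{u} \otimes \hat{v} \cdot \kernel$ built from admissible dual candidates (Lemma \ref{lem:PDGap}, a consequence of Proposition \ref{prop:Duality}\,\eqref{item:Duality:Gap}); and (c) the construction, given the iterate $\iterl{\pi}$ and its predecessor $\iterlm{\pi}$, of a specific glued pair $(\hat{u},\hat{v})$ for which $\KL(\iterl{\pi}|\hat{\pi})$ is controlled by $\KL(\iterlm{\pi}|\iterl{\pi})$ up to the explicit constant in \eqref{eq:ThreeConvergenceKLDecrement}. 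Combining (a)-(c) gives $\Delta(\iterl{\pi}) \leq C\,(\Delta(\iterlm{\pi}) - \Delta(\iterl{\pi}))$ with $C = \exp(2\|c\|/\veps)\,\|\mu_{\{1,3\}}\|/\|\mu_2\|$, which rearranges to $\Delta(\iterl{\pi}) \leq \tfrac{C}{1+C}\,\Delta(\iterlm{\pi}) = (1 + C^{-1})^{-1}\,\Delta(\iterlm{\pi})$, i.e.~exactly \eqref{eq:ThreeConvergenceKLDecrement}. Since $C^{-1} = \exp(-2\|c\|/\veps)\,\|\mu_2\|/\|\mu_{\{1,3\}}\| > 0$ whenever $\|\mu_2\|>0$, the contraction factor is strictly less than $1$, so $\Delta(\iterl{\pi}) \to 0$; because $\KL(\cdot|\kernel)$ has $\pi^\ast$ as its unique minimizer and is coercive/l.s.c., this forces $\iterl{\pi} \to \pi^\ast$, giving the final assertion.

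For the gluing construction in step (c), I would imitate the end of the proof of Proposition \ref{prop:SimpleConvergence}. Say $\ell$ is odd, so $\iterl{\pi}$ was produced by an $\partA$-iteration and $\iterlm{\pi}$ by a $\partB$-iteration; then $\iterlm{\pi}$ carries scalings $(\iterlm{u}_{\{1\}}, \iterlm{v}_{\{1\}})$ and $(\iterlm{u}_{\{2,3\}}, \iterlm{v}_{\{2,3\}})$, while $\iterl{\pi}$ carries $(\iterl{u}_{\{1,2\}}, \iterl{v}_{\{1,2\}})$ and $(\iterl{u}_{\{3\}}, \iterl{v}_{\{3\}})$. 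I want a single $(\hat{u},\hat{v})$ with $\hat{\pi} = \hat{u}\otimes\hat{v}\cdot\kernel$. On $X_1$ the plan is to take $\hat{u}$ essentially from $\iterl{u}_{\{1,2\}}$ (the cell where $X_1$ was last re-solved jointly with $X_2$), on $X_3$ from $\iterl{u}_{\{3\}}$, on $X_2$ from $\iterl{u}_{\{1,2\}}$, and $\hat{v}$ from a suitable rescaling of the composite $v$'s — then absorb the mismatch between the two representations on $X_2$ (which, if anything, prevented an honest global gluing) by a correction factor, using the ratio bounds of Lemma \ref{lem:ScalingLemma}\,\eqref{item:ScalingLemma:Ratio} and the density bounds \eqref{eq:DensityBound}, \eqref{eq:RadNikLowerBound1Step}. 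The point is that after these two consecutive half-iterations $X_2$ has been re-optimized jointly with both of its neighbours, so the candidate $\hat{\pi}$ is a genuine admissible dual point whose suboptimality is tied to how much $\iterlm{\pi}$ had to change on $X_{\{2,3\}}$, and that change is exactly what $\KL(\iterlm{\pi}|\iterl{\pi})$ measures.

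The main obstacle is step (c): producing the explicit constant $\exp(2\|c\|/\veps)\,\|\mu_{\{1,3\}}\|/\|\mu_2\|$ rather than just \emph{some} constant. This requires carefully tracking how the $Y$-marginal mass on $X_2$ is redistributed between the composite cells $\{1,2\}$ and $\{2,3\}$ across the two half-iterations, and using $\|\mu_2\|>0$ in the denominator to quantify the "minimal overlap" through which information propagates. I expect the weight $\|\mu_{\{1,3\}}\| = \|\mu_1\| + \|\mu_3\|$ to enter when bounding a sum of $\KL$ contributions over the non-overlap cells by the full $\KL(\iterlm{\pi}|\iterl{\pi})$, and the factor $\exp(2\|c\|/\veps)$ to come from two applications of the kernel bound $\exp(-\|c\|/\veps) \leq \sKernel \leq 1$ — one to relate scalings across cells, one to relate a partial marginal density to $\nu$. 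All of (a) and (b) should be short and essentially formal given Propositions \ref{prop:EntropicOTBasic} and \ref{prop:Duality}; the convergence conclusion at the end is immediate from $\sum_\ell (\Delta(\iterlm{\pi}) - \Delta(\iterl{\pi})) < \infty$ together with the geometric decay.
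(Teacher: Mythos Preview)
Your high-level architecture is exactly the paper's: Lemma \ref{lem:SubOptKL} for (a), Lemma \ref{lem:PDGap} for (b), and a glued dual candidate for (c) to reach $\Delta(\iterl{\pi}) \leq C\,(\Delta(\iterlm{\pi})-\Delta(\iterl{\pi}))$. (Drop the stray $\veps$'s in (a) and (b): $\Delta$ is already defined without the $\veps$ prefactor, so the lemmas read $\Delta(\iterlm{\pi})-\Delta(\iterl{\pi})=\KL(\iterlm{\pi}|\iterl{\pi})$ and $\Delta(\iterl{\pi})\leq\KL(\iterl{\pi}|\hat{\pi})$.)

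Step (c) is where your sketch is too loose to go through as written, and a couple of your guesses about the mechanism are off. The paper's construction (for $\ell$ odd) is sharper than a generic ``rescaling of the composite $v$'s'': one takes a \emph{single} $\hat{v}\assign\iterl{v}_{\{1,2\}}$ on all of $Y$, and $\hat{u}\assign\iterl{u}_{\{1,2\}}$ on $X_{\{1,2\}}$, $\hat{u}\assign q\cdot\iterlm{u}_3$ on $X_3$ for a scalar $q>0$. This makes $\hat{\pi}_{\{1,2\}}=\iterl{\pi}_{\{1,2\}}$ exactly, so $\KL(\iterl{\pi}|\hat{\pi})=\KL(\iterl{\pi}_3|\hat{\pi}_3)$. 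Now two structural facts you did not invoke do the real work: since $\{3\}\in\partA$ one has $\iterl{\pi}_3=\iterlm{\pi}_3$, and since $\{2,3\}\in\partB$ one has $\iterlm{v}_3=\iterlm{v}_2$; together these make $\RadNik{\iterl{\pi}_3}{\hat{\pi}_3}(x,y)=\iterlm{v}_2(y)/(q\,\iterl{v}_2(y))$, a function of $y$ alone. The missing technical tool is Jensen's inequality for the convex map $s\mapsto\varphi(a/s)\,s$: applied to the $X_2$-integral inside $\KL(\iterlm{\pi}_2|\iterl{\pi}_2)$ it collapses the $x$-dependence and \emph{dictates} the choice $q=\fint_{X_2}\tfrac{\iterl{u}_2}{\iterlm{u}_2}\,\diff\mu_2$. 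Then one application of the ratio bound \eqref{eq:ScalingRatioBound} (not the density bounds \eqref{eq:DensityBound}--\eqref{eq:RadNikLowerBound1Step}, which are only needed in the general $N$-cell proof) swaps $\iterlm{u}_2(x)\,\sKernel(x,y)$ for $\iterlm{u}_3(x_3)\,\sKernel(x_3,y)$ at the cost of $\exp(-2\|c\|/\veps)$, and the two $\KL$'s match up to give $\KL(\iterl{\pi}_3|\hat{\pi}_3)\leq e^{2\|c\|/\veps}\,\tfrac{\|\mu_3\|}{\|\mu_2\|}\,\KL(\iterlm{\pi}_2|\iterl{\pi}_2)$.

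Finally, $\|\mu_{\{1,3\}}\|$ does not come from a ``sum of $\KL$ contributions over the non-overlap cells'': for odd $\ell$ the constant is $\|\mu_3\|/\|\mu_2\|$, for even $\ell$ it is $\|\mu_1\|/\|\mu_2\|$, and $\|\mu_{\{1,3\}}\|\geq\max\{\|\mu_1\|,\|\mu_3\|\}$ is what yields the parity-independent bound.
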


The proof for Theorem \ref{thm:ThreeConvergence} requires some some auxiliary Lemmas.
Most of the Lemmas are already formulated for more general partitions to allow reusing them in the next Section.
The Lemmas strongly rely on algebraic properties of the $\KL$ divergence that are used in a similar way throughout the literature, see for example \cite[Lemma 2]{Greenkhorn2017}.

\begin{lemma}
	\label{lem:ScalingKL}
	Let $\pi=(u \otimes v) \cdot \kernel$ for $u \in L^\infty_+(X,\mu)$, $v \in L^\infty_+(Y,\nu)$. Then
	\begin{align}
	\KL(\pi|\kernel) & =
	\la \proj_X \pi, \log u \ra	 + \la \proj_Y \pi, \log v \ra
	- \|\pi\| + \|\kernel\|\,.
	\end{align}
	Similarly, given $\pi_i = (u_i \otimes v_i) \cdot \kernel_i$ for $u_i \in L^\infty_+(X,\mu)$, $v_i \in L^\infty_+(Y,\nu)$, $i \in I$, and $\pi = \sum_{i\in I} \pi_i$, then
	\begin{align}
	\KL(\pi|\kernel) & = \sum_{i\in I} \left[
	\la \proj_X \pi_i, \log u_i \ra	 + \la \proj_Y \pi_i, \log v_i \ra \right]
	- \|\pi\| + \|\kernel\|.
	\end{align}
\end{lemma}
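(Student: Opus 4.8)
The plan is to compute $\KL(\pi|\kernel)$ directly from its definition. Since $u,v \in L^\infty_+$, the density $u \otimes v$ is non-negative and bounded, so $\pi = (u \otimes v)\cdot\kernel$ is a non-negative measure with $\pi \ll \kernel$ and $\RadNik{\pi}{\kernel} = u \otimes v$ $\kernel$-a.e. Hence $\KL(\pi|\kernel) = \int_{X\times Y}\varphi(u\otimes v)\,\diff\kernel$, and I would expand $\varphi(s) = s\log s - s + 1$ (with the convention $0\log 0 = 0$, consistent with $\varphi(0) = 1$) to split this into $\int (u\otimes v)\log(u\otimes v)\,\diff\kernel - \int (u\otimes v)\,\diff\kernel + \int 1\,\diff\kernel$. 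The last two terms are $-\|\pi\|$ and $\|\kernel\|$. For the first, I change measure, $(u\otimes v)\,\diff\kernel = \diff\pi$, to rewrite it as $\int_{X\times Y}\log(u\otimes v)\,\diff\pi$; since $\pi$-a.e.\ one has $u(x)v(y) > 0$ and therefore $u(x) > 0$ and $v(y) > 0$, I may split $\log(u(x)v(y)) = \log u(x) + \log v(y)$ $\pi$-a.e.\ and integrate each term against the respective marginal, obtaining $\la\proj_X\pi,\log u\ra + \la\proj_Y\pi,\log v\ra$. This is the first identity.

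The only point that needs genuine care is checking that every term is finite, so that the splittings above do not secretly involve $\infty - \infty$. Boundedness of $u\otimes v$, continuity of $\varphi$ on a compact interval $[0,\|u\|_\infty\|v\|_\infty]$, and $\|\kernel\| < \infty$ give $\KL(\pi|\kernel) < \infty$. For $\la\proj_X\pi,\log u\ra$ the positive part of $\log u$ is bounded by $\log^+\|u\|_\infty$, and for the negative part I would use that $\proj_X\pi$ has $\mu$-density $x\mapsto u(x)\int_Y\sKernel(x,y)v(y)\,\diff\nu(y)$, so that $\int(\log u)^-\,\diff(\proj_X\pi) = \int_{\{0 < u < 1\}}\bigl(-u(x)\log u(x)\bigr)\bigl(\int_Y\sKernel(x,y)v(y)\,\diff\nu(y)\bigr)\,\diff\mu(x) \leq \tfrac{1}{e}\,\|v\|_\infty$, using $-s\log s \leq 1/e$ on $[0,1]$, $\sKernel \leq 1$, and $\mu,\nu$ probability measures; the argument for $\la\proj_Y\pi,\log v\ra$ is symmetric. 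This also makes the integrals unambiguous, since $\proj_X\pi$ assigns no mass to $\{u = 0\}$, where $\log u = -\infty$.

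For the second identity I would exploit additivity of $\KL$ over the measurable partition $\{X_i\times Y\}_{i\in I}$ of $X\times Y$: since $\pi = \sum_{i}\pi_i$ and $\kernel = \sum_i\kernel_i$ with $\pi_i$, $\kernel_i$ supported on disjoint sets, $\KL(\pi|\kernel) = \sum_{i\in I}\KL(\pi_i|\kernel_i)$. Applying the first identity to each $\pi_i = (u_i\otimes v_i)\cdot\kernel_i$ — which is the same computation with $\kernel_i$ in place of $\kernel$, and where it does not matter how $u_i$, $v_i$ are defined outside $X_i\times Y$ since $\RadNik{\pi_i}{\kernel_i} = u_i\otimes v_i$ there and $\proj_X\pi_i$, $\proj_Y\pi_i$ see only those values — gives $\KL(\pi_i|\kernel_i) = \la\proj_X\pi_i,\log u_i\ra + \la\proj_Y\pi_i,\log v_i\ra - \|\pi_i\| + \|\kernel_i\|$, and summing over $i\in I$ together with $\sum_i\|\pi_i\| = \|\pi\|$ and $\sum_i\|\kernel_i\| = \|\kernel\|$ yields the claim.

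I do not expect a real obstacle: the lemma is essentially a bookkeeping reformulation of the definition of $\KL$ for scaled kernels. The only slightly delicate points are handling the vanishing sets of $u$ and $v$ together with the $0\log 0 = 0$ and $0\cdot(-\infty) = 0$ conventions, and the finiteness checks that license the term-by-term integration; everything else is a one-line change of variables and the definition of marginals.
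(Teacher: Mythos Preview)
Your proof is correct and follows essentially the same approach as the paper: expand $\varphi(s)=s\log s - s + 1$, change measure to $\diff\pi$, split $\log(u\otimes v)$, and use additivity of $\KL$ over the partition $\{X_i\times Y\}$. The only cosmetic differences are that the paper proves the partition case first and obtains the single-cell case as the trivial partition, and that the paper dispatches the finiteness check by the phrase ``arguing as in Proposition~\ref{prop:EntropicOTBasic}~\eqref{item:EntropicOTBasic:Bounds}'' whereas you spell out the $-s\log s\le 1/e$ bound explicitly.
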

We briefly recall from Section \ref{sec:Notation} at this point that $\la \rho, f \ra$ denotes integration of the measurable function $f$ against the measure $\rho$.
\begin{proof}
	We show the second statement. The first then follows by setting $(X_{i})_{i \in I},I$ to the trivial partition $(X_1\assign X)$, $I\assign\{1\}$.
	Arguing as in Proposition \ref{prop:EntropicOTBasic} \eqref{item:EntropicOTBasic:Bounds} we find $\log u_i \in L^1(X,\proj_X \pi_i)$, $\log v_i \in L^1(Y,\proj_Y \pi_i)$ and therefore, all integrals in the following are finite.
	The proof now follows quickly from direct computation:
	\begin{align*}
	\KL(\pi|\kernel) &
	= \sum_{i \in I} \KL(\pi_i|\kernel_i)
	= \sum_{i \in I} \int_{X \times Y} \varphi\left(\RadNik{\pi_i}{\kernel_i}\right)\diff\kernel_i \\
	& = \sum_{i \in I} \left[ \int_{X \times Y} \log\left(u_i(x)\,v_i(y)\right)\,\diff \pi_i(x,y) 
	- \pi_i(X \times Y)+\kernel_i(X \times Y) \right] \\
	& = \sum_{i \in I} \left[ \int_{X \times Y} \big[ \log(u_i(x))+ \log(v_i(y)) \big] \diff \pi_i(x,y) \right]
	- \|\pi\|+\|\kernel\| \\
	& = \sum_{i\in I} \left[\la \proj_X \pi_i, \log u_i \ra + \la \proj_Y \pi_i, \log v_i \ra \right]
	- \|\pi\| + \|\kernel\|
	\qedhere
	\end{align*}
\end{proof}

\begin{lemma}[Expressions for decrement and sub-optimality]
	\label{lem:SubOptKL}
	Let $\ell > 1$. Then,
	\[
	\Delta(\iterlm{\pi})-\Delta(\iterl{\pi}) = \KL(\iterlm{\pi}|\kernel)-\KL(\iterl{\pi}|\kernel) = \KL(\iterlm{\pi}|\iterl{\pi})
	\]
	and
	\[
	\Delta(\iterl{\pi}) = \KL(\iterl{\pi}|\kernel)-\KL(\pi^\ast|\kernel) = \KL(\iterl{\pi}|\pi^\ast).
	\]
\end{lemma}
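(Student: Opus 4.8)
The plan is to establish both identities from a single underlying fact: whenever $\pi, \rho \in \Pi(\mu,\nu)$ and $\rho = (u \otimes v) \cdot \kernel$ for some scaling factors $u \in L^\infty_+(X,\mu)$, $v \in L^\infty_+(Y,\nu)$, one has the ``Pythagorean'' identity $\KL(\pi|\kernel) = \KL(\pi|\rho) + \KL(\rho|\kernel)$. I would prove this by direct computation: expand $\KL(\pi|\kernel) - \KL(\pi|\rho)$ using $\RadNik{\pi}{\kernel} = \RadNik{\pi}{\rho} \cdot \RadNik{\rho}{\kernel}$ and the fact that $\varphi(st) = $ something that splits once one collects the $\log$ terms; concretely $\KL(\pi|\kernel) - \KL(\pi|\rho) = \int \log(\RadNik{\rho}{\kernel})\,\diff\pi - \|\pi\| + \|\rho\|$, and since $\RadNik{\rho}{\kernel} = u \otimes v$ and $\pi, \rho$ share the same marginals $\mu,\nu$, we get $\int \log(u \oplus v)\,\diff\pi = \la\mu,\log u\ra + \la\nu,\log v\ra = \int \log(u\oplus v)\,\diff\rho$, and the right-hand side equals $\KL(\rho|\kernel)$ by Lemma \ref{lem:ScalingKL}. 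All integrals are finite by the $L^1$ statements in Proposition \ref{prop:EntropicOTBasic}\eqref{item:EntropicOTBasic:Bounds} (and Lemma \ref{lem:ScalingLemma}), so the splitting is legitimate.

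For the second identity I would apply this with $\rho = \pi^\ast = (u^\ast \otimes v^\ast)\cdot\kernel$, which is of the required form by Proposition \ref{prop:EntropicOTBasic}\eqref{item:EntropicOTBasic:Scaling} and lies in $\Pi(\mu,\nu)$; this gives $\KL(\iterl{\pi}|\kernel) = \KL(\iterl{\pi}|\pi^\ast) + \KL(\pi^\ast|\kernel)$, i.e.\ $\Delta(\iterl{\pi}) = \KL(\iterl{\pi}|\pi^\ast)$.

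For the first identity I want to take $\pi = \iterlm{\pi}$ and $\rho = \iterl{\pi}$. The point is that $\iterl{\pi}$ is of the form $(u \otimes v)\cdot\kernel$: on each composite cell $J \in \iterl{\partGeneric}$ we have $\iterl{\pi}_J = \iterl{u}_J \otimes \iterl{v}_J \cdot \kernel_J$, and gluing these via the basic-cell scalings $\iterl{u}_i, \iterl{v}_i$ of \eqref{eq:BasicScalings} writes $\iterl{\pi} = \sum_{i\in I} \iterl{u}_i \otimes \iterl{v}_i \cdot \kernel_i$. Then, using the second form of Lemma \ref{lem:ScalingKL} together with the fact that $\iterl{\pi}$ and $\iterlm{\pi}$ have the same $X$-marginal ($=\mu$) \emph{and}, crucially, the same partial $Y$-marginals on every cell of $\iterl{\partGeneric}$ (since line \ref{line:YMarginal} of Algorithm \ref{alg:DomDec} sets $\iterl{\nu}_J = \proj_Y(\iterlm{\pi}\restr(X_J\times Y))$, and the minimization in line \ref{line:CompositePi} preserves this marginal), one gets $\la \proj_X \iterlm{\pi}_i, \log \iterl{u}_i\ra = \la\proj_X\iterl{\pi}_i,\log\iterl{u}_i\ra$ and the analogous equality for the $v$-terms summed over $i \in J$ for each $J$. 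A slightly cleaner route is to observe directly that $\KL(\iterlm{\pi}|\kernel) - \KL(\iterlm{\pi}|\iterl{\pi})$ telescopes cell-by-cell over $\iterl{\partGeneric}$ to $\sum_J \KL(\iterl{\pi}_J|\kernel_J) = \KL(\iterl{\pi}|\kernel)$, again because $\iterlm{\pi}\restr(X_J\times Y)$ and $\iterl{\pi}_J$ share their marginals. Either way, this yields $\KL(\iterlm{\pi}|\kernel) - \KL(\iterl{\pi}|\kernel) = \KL(\iterlm{\pi}|\iterl{\pi})$, and subtracting the constant $\KL(\pi^\ast|\kernel)$ from both $\KL$-terms on the left converts it to the claimed statement about $\Delta$.

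The main obstacle is bookkeeping with the marginals: one has to be careful that $\log \iterl{u}_i$ is integrated against the \emph{same} measure when it appears in the expansion of $\KL(\iterlm{\pi}|\kernel)$ (via $\iterlm{\pi}_i$) and of $\KL(\iterlm{\pi}|\iterl{\pi})$, and that the $Y$-marginals genuinely agree only after summing $i$ over a full composite cell $J$, not cell-by-cell on the basic partition. Checking finiteness of all the $\log$-integrals (so that no $\infty - \infty$ arises in the telescoping) is routine but must be invoked from Lemma \ref{lem:ScalingLemma}\eqref{item:ScalingLemma:Bounds}. Both identities then follow.
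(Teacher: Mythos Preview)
Your proposal is correct and, at its core, uses the same ingredients as the paper: Lemma \ref{lem:ScalingKL}, the equality $\iterl{v}_i = \iterl{v}_J$ for all $i \in J$, and the preservation of the composite $Y$-marginals $\sum_{i \in J}\iterl{\nu}_i = \sum_{i \in J}\iterlm{\nu}_i$. The one organizational difference is your global Pythagorean identity $\KL(\pi|\kernel) = \KL(\pi|\rho) + \KL(\rho|\kernel)$ for $\rho = (u\otimes v)\cdot\kernel$ with $\pi,\rho \in \Pi(\mu,\nu)$: this dispatches the second claim in one stroke, since $\pi^\ast$ is globally of scaling form, and is slightly cleaner there than the paper's uniform cell-by-cell treatment of both identities. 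For the first claim your global identity does not apply directly---as you note, $\iterl{\pi}$ is only \emph{piecewise} of scaling form, with different $\iterl{v}_J$ on different composite cells---and your cellwise argument then coincides with the paper's. Your remarks about where finiteness of the $\log$-integrals must be invoked, and about the $Y$-marginals agreeing only after summing over a full composite cell, match exactly the care taken in the paper's computation.
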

\begin{proof}
	Fix $\ell > 1$. With the notation of Definition \ref{def:Notation} we can decompose the current and previous iterate as
	\begin{align*}
	\iterlm{\pi} & = \sum_{i \in I} \iterlm{u}_i \otimes \iterlm{v}_i \cdot \kernel_i, &
	\iterl{\pi} & = \sum_{i \in I} \iterl{u}_i \otimes \iterl{v}_i \cdot \kernel_i,
	\end{align*}
	with all partial scalings being essentially bounded with respect to $\mu$ or $\nu$.
	So, by using Lemma \ref{lem:ScalingKL}, we compute
	\begin{align}
	& \KL(\iterlm{\pi}|\kernel) - \KL(\iterl{\pi}|\kernel)
	= \sum_{J \in \iterl{\partGeneric}} \sum_{i \in J} \Big[\KL(\iterlm{\pi_i}|\kernel_i) - \KL(\iterl{\pi_i}|\kernel_i) \Big] \nonumber \\
	&= \sum_{J \in \iterl{\partGeneric}} \sum_{i \in J} \Big[ \la \proj_X \iterlm{\pi}_i, \log \iterlm{u}_i \ra
	+\la \proj_Y \iterlm{\pi}_i, \log \iterlm{v}_i \ra \nonumber \\
	&
	\qquad\qquad\qquad - \la \proj_X \iterl{\pi}_i, \log \iterl{u}_i \ra
	- \la \proj_Y \iterl{\pi}_i, \log \iterl{v}_i \ra
	-\|\iterlm{\pi}_i\| + \|\iterl{\pi}_i\| \Big] \nonumber \\
	&= \sum_{J \in \iterl{\partGeneric}} \sum_{i \in J} \Big[ \la \mu_i, \log \iterlm{u}_i \ra
	+\la \iterlm{\nu}_i, \log \iterlm{v}_i \ra \nonumber \\
	&
	\qquad\qquad\qquad - \la \mu_i, \log \iterl{u}_i \ra
	- \la \iterl{\nu}_i, \log \iterl{v}_i \ra
	-\|\iterlm{\pi}_i\| + \|\iterl{\pi}_i\| \Big].
	\label{eq:KLIncrementalProofA}
	\end{align}
	Note now that, for any $J \in \iterl{\partGeneric}$, the $Y$-marginal on each composite cell is kept fixed during the iteration (see Algorithm \ref{alg:DomDec}, lines \ref*{line:YMarginal}-\ref*{line:CompositePi}), i.e.,
	\begin{equation*}
	\sum_{i \in J} \iterl{\nu}_i = \proj_Y \iterl{\pi}_J = \iterl{\nu}_J = \proj_Y \iterlm{\pi}_J = \sum_{i \in J} \iterlm{\nu}_i
	\end{equation*}
	Hence, since $\iterl{v}_{i} = \iterl{v}_{J} = \iterl{v}_{j}$ for all $i, j \in J$ (see \eqref{eq:BasicScalings}), we have
	\begin{align*}
	\sum_{i \in J} \la \iterl{\nu}_i, \log \iterl{v}_i \ra =
	\sum_{i \in J} \la \iterlm{\nu}_i, \log \iterl{v}_i \ra.
	\end{align*}
	Thus, we can continue:
	\begin{align*}
	\eqref{eq:KLIncrementalProofA} & = 
	\sum_{J \in \iterl{\partGeneric}} \sum_{i \in J} \left[ \la {\mu}_i, \log\left(\tfrac{\iterlm{u}_i}{\iterl{u}_i}\right) \ra
	+\la \iterlm{\nu}_i, \log \left(\tfrac{\iterlm{v}_i}{\iterl{v}_i}\right) \ra
	-\|\iterlm{\pi}_i\| + \|\iterl{\pi}_i\| \right] \\
	& = \sum_{i \in I} \KL(\iterlm{\pi}_i|\iterl{\pi}_i) = \KL(\iterlm{\pi}|\iterl{\pi}).
	\end{align*}
	The proof of the second statement follows the same steps. It hinges on the fact that $\proj_X \iterlm{\pi}_i=\mu_i=\proj_X\pi^\ast_i$, all $v^\ast_{i}=v^\ast$ are identical, and $\sum_{i \in I} \proj_Y \iterlm{\pi}_i = \nu = \sum_{i \in I} \proj_Y \pi^\ast_i$ so that one has
	\begin{equation*}
	\sum_{i \in I} \la \proj_Y \pi^\ast_i, \log v^\ast_i \ra 
	=\sum_{i \in I} \la \proj_Y \pi^\ast_i, \log v^\ast \ra
	=\sum_{i \in I} \la \proj_Y \iterlm{\pi}_i, \log v^\ast \ra
	=\sum_{i \in I} \la \proj_Y \iterlm{\pi}_i, \log v^\ast_i \ra.
	\qedhere
	\end{equation*}
\end{proof}

\begin{lemma}[Primal-dual gap]
	\label{lem:PDGap}
	Let
	\begin{equation*}
		\tilde{\pi} = \sum_{i \in I} \tilde{\pi}_i, \quad
		\tilde{\pi}_i = (\tilde{u}_i \otimes \tilde{v}_i) \otimes \kernel_i, \quad
		\tilde{u}_i \in L^\infty_+(X,\mu), \quad
		\tilde{v}_i \in L^\infty_+(Y,\nu)
		\quad \text{for} \quad i \in I
	\end{equation*}
	and
	$\hat{\pi} = (\hat{u} \otimes \hat{v}) \cdot \kernel$ with $\hat{u} \in L^\infty_+(X,\mu)$, $\hat{v} \in L^\infty_+(Y,\nu)$.
	If $\tilde{\pi} \in \Pi(\mu, \nu)$, then
	\begin{equation*}
	\Delta(\tilde{\pi}) \leq \KL(\tilde{\pi}|\hat{\pi}).
	\end{equation*}
\end{lemma}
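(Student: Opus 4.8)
The plan is to combine the weak duality bound of Proposition~\ref{prop:Duality}~\eqref{item:Duality:Gap} with an exact algebraic identity that rewrites $\KL(\tilde{\pi}|\kernel)$ as the sum of the dual objective $J(\hat{u},\hat{v})$ and the divergence $\KL(\tilde{\pi}|\hat{\pi})$. Granting that identity, the conclusion is immediate: since $\pi^\ast \in \Pi(\mu,\nu)$, Proposition~\ref{prop:Duality}~\eqref{item:Duality:Gap} gives $J(\hat{u},\hat{v}) \le \KL(\pi^\ast|\kernel)$, hence
\[
\Delta(\tilde{\pi}) = \KL(\tilde{\pi}|\kernel) - \KL(\pi^\ast|\kernel) \le \KL(\tilde{\pi}|\kernel) - J(\hat{u},\hat{v}) = \KL(\tilde{\pi}|\hat{\pi}).
\]

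To set up the identity I would first dispose of the case $\KL(\tilde{\pi}|\hat{\pi}) = +\infty$, where there is nothing to prove, and otherwise note $\tilde{\pi} \ll \hat{\pi} \ll \kernel$. The decomposition hypothesis on $\tilde{\pi}$ is used to guarantee $\KL(\tilde{\pi}|\kernel) < \infty$: because $\tilde{\pi} \in \Pi(\mu,\nu)$ and each $\tilde{\pi}_i$ is supported on $X_i \times Y$, one has $\proj_X \tilde{\pi}_i = \mu_i$, so $\tilde{\pi}_i$ is the (unique) optimal entropic coupling between $\mu_i$ and $\tilde{\nu}_i \assign \proj_Y \tilde{\pi}_i$ by Proposition~\ref{prop:Duality}~\eqref{item:Duality:OptimalityCondition}; then Proposition~\ref{prop:EntropicOTBasic}~\eqref{item:EntropicOTBasic:Bounds} gives $\log \tilde{u}_i \in L^1(X,\mu_i)$ and $\log \tilde{v}_i \in L^1(Y,\tilde{\nu}_i)$, and Lemma~\ref{lem:ScalingKL} both evaluates $\KL(\tilde{\pi}|\kernel)$ and shows it is finite. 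This finiteness is what keeps the subsequent manipulations free of any $\infty-\infty$ ambiguity.

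The identity $\KL(\tilde{\pi}|\kernel) = J(\hat{u},\hat{v}) + \KL(\tilde{\pi}|\hat{\pi})$ itself is a short computation. Expanding $\KL(\tilde{\pi}|\hat{\pi}) = \la \tilde{\pi}, \log \tfrac{\diff\tilde{\pi}}{\diff\hat{\pi}} \ra - \|\tilde{\pi}\| + \|\hat{\pi}\|$, using $\tfrac{\diff\tilde{\pi}}{\diff\hat{\pi}} = (\hat{u} \otimes \hat{v})^{-1}\,\tfrac{\diff\tilde{\pi}}{\diff\kernel}$, splitting the logarithm, and integrating the two pieces against the marginals $\proj_X \tilde{\pi} = \mu$ and $\proj_Y \tilde{\pi} = \nu$, one recovers exactly $\KL(\tilde{\pi}|\kernel)$ together with the terms $\la \mu, \log \hat{u} \ra + \la \nu, \log \hat{v} \ra - \|\hat{\pi}\| + \|\kernel\| = J(\hat{u}, \hat{v})$ from \eqref{eq:EntropicOTDualObjective} (using $\la \kernel, \hat{u} \otimes \hat{v} \ra = \|\hat{\pi}\|$ and that in the present global setting $\hat{\mu} = \mu$, $\hat{\nu} = \nu$).

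I expect the only real obstacle to be the careful handling of integrability and of possibly infinite terms. Before splitting the logarithm one must know $\log \hat{u} \in L^1(X,\mu)$ and $\log \hat{v} \in L^1(Y,\nu)$; this follows because $\log \hat{u} \oplus \log \hat{v} = \log \tfrac{\diff\tilde{\pi}}{\diff\kernel} - \log \tfrac{\diff\tilde{\pi}}{\diff\hat{\pi}}$ is $\tilde{\pi}$-integrable (both divergences being finite) while each summand is bounded above (as $\hat{u}, \hat{v} \in L^\infty_+$), so in particular $\mu(\{\hat{u} = 0\}) = \nu(\{\hat{v} = 0\}) = 0$. One should also check the degenerate alternative: if, say, $\mu(\{\hat{u}=0\})>0$, then $J(\hat{u},\hat{v}) = -\infty$ and $\KL(\tilde{\pi}|\hat{\pi}) = +\infty$, so the asserted inequality holds trivially and is consistent with the identity in the extended-real sense. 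None of this is deep, but it is the part that must be written with some care.
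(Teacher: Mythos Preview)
Your proposal is correct and follows essentially the same route as the paper: both arguments establish the identity $\KL(\tilde{\pi}|\kernel)-J(\hat{u},\hat{v})=\KL(\tilde{\pi}|\hat{\pi})$ by direct expansion (the paper computes cell-by-cell via $\sum_i \KL(\tilde{\pi}_i|\hat{\pi}_i)$, you work globally) and then apply the weak-duality bound $J(\hat{u},\hat{v})\le \KL(\pi^\ast|\kernel)$. Your treatment of the integrability of $\log\hat{u}$ and $\log\hat{v}$ is in fact a bit more explicit than the paper's, which simply remarks that the scalings are essentially bounded; otherwise the two proofs are the same.
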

\begin{proof}
	If $\KL(\tilde{\pi}|\hat{\pi})=+\infty$ there is nothing to prove. Therefore, assume $\KL(\tilde{\pi}|\hat{\pi})<+\infty$.
	Abbreviating $\hat{\pi}_i\assign \hat{\pi} \restr (X_i \times Y)$ for $i \in I$, we find
	\begin{align*}
		\KL(\tilde{\pi}|\hat{\pi}) &= \sum_{i \in I} \KL(\tilde{\pi}_i|\hat{\pi}_i)
		= \sum_{i\in I} \int_{X \times Y} \varphi\left(\tfrac{\tilde{u}_i \otimes \tilde{v}_i}{\hat{u} \otimes \hat{v}}\right)\,\diff \hat{\pi}_i \\
		& = \sum_{i \in I} \left[
		\int_{X \times Y} \log\left(\tilde{u}_i \otimes \tilde{v}_i\right)\,\diff \tilde{\pi}_i
		- \int_{X \times Y} \log\left(\hat{u} \otimes \hat{v}\right)\,\diff \tilde{\pi}_i
		- \|\tilde{\pi}_i\| + \|\hat{\pi}_i\|\right] \\
		& = \KL(\tilde{\pi}|\kernel) - \int_X \log \hat{u}\,\diff \mu - \int_Y \log \hat{v}\,\diff \nu + \|\hat{\pi}\| - \|\kernel\| \\
		& = \KL(\tilde{\pi}|\kernel) - J(\hat{u},\hat{v}) \geq \Delta(\tilde{\pi}).
	\end{align*}
	Here the integral in the first line is finite since $\KL(\tilde{\pi}_i|\hat{\pi}_i)\leq \KL(\tilde{\pi}|\hat{\pi})<\infty$. The first integral in the second line is finite since $\log\left(\tilde{u}_i \otimes \tilde{v}_i\right) \in L^1(X \times Y,\tilde{\pi}_i)$, arguing as in Proposition \ref{prop:EntropicOTBasic} \eqref{item:EntropicOTBasic:Bounds}, and therefore so is the second. Since all scaling factors are essentially bounded, we can split the products within the logarithms into separate integrals in the third line where we also used $\tilde{\pi} \in \Pi(\mu,\nu)$.
	In the fourth line we use the definition of $J$, \eqref{eq:EntropicOTDualObjective}. The last inequality is due to Proposition \ref{prop:Duality}, since for the primal minimizer $\pi^\ast$ we find $\KL(\pi^\ast|\kernel) \geq J(\hat{u},\hat{v})$ and the primal-dual gap is thus a bound for the suboptimality \eqref{eq:Suboptimality}.
\end{proof}

\begin{proof}[Theorem \ref{thm:ThreeConvergence}]
	Assume first that $\ell>1$ is odd, and so the current composite partition is $\partA=\{\{1,2\},\{3\}\}$.
	On one hand, using Lemma \ref{lem:SubOptKL}, one finds
	\begin{align}
	\label{eq:ThreeConvergenceProofUpperBegin}
	\Delta(\iterlm{\pi})-\Delta(\iterl{\pi}) = \KL(\iterlm{\pi}|\iterl{\pi})
	\geq \KL(\iterlm{\pi}_2|\iterl{\pi}_2).
	\end{align}
	On the other hand, with Lemma \ref{lem:PDGap}, one can bound $\Delta(\iterl{\pi}) \leq \KL(\iterl{\pi}|\hat{\pi})$ for a suitable $\hat{\pi}$ of the form $\hat{\pi}=(\hat{u} \otimes \hat{v}) \cdot \kernel$. Using a piecewise definition on the partition cells for $\hat{u}$, we set here 
	\begin{equation}\label{eq:PiHat3Cells}
	\hat{u} \assign \iterl{u}_{\{1,2\}} + q \cdot \iterlm{u}_{\{3\}}
	\qquad \text{and} \qquad
	\hat{v} \assign \iterl{v}_{\{1,2\}}
	\end{equation}
	where the factor $q \in \R_{++}$ is to be determined later (observe that $\hat{u}$ and $\hat{v}$ are upper bounded, thanks to Lemma \ref{lem:ScalingLemma}). With this choice of $\hat{\pi}$ we find that $\iterl{\pi}_{\{1,2\}}=\hat{\pi}_{\{1,2\}}$ and thus
	\begin{align}	
	\label{eq:ThreeConvergenceProofLowerBegin}
	\Delta(\iterl{\pi}) \leq \KL(\iterl{\pi}|\hat{\pi}) = \KL(\iterl{\pi}_3|\hat{\pi}_3)\,.
	\end{align}
	To complete the proof we must now find a constant $C \in \R_+$ such that
	\begin{align*}
	\KL(\iterl{\pi}_3|\hat{\pi}_3) \leq C \cdot \KL(\iterlm{\pi}_2|\iterl{\pi}_2)\,.
	\end{align*}		
	To this end we write
	\begin{align}
	&\KL(\iterlm{\pi}_2|\iterl{\pi}_2) = \int_{X \times Y} \varphi\left(\RadNikD{\iterlm{\pi}_2}{\iterl{\pi}_2}\right)\,
	\diff \iterl{\pi}_2 \nonumber \\
	&= \int_{X \times Y} \varphi\left(\frac{\iterlm{u}_2(x)\,\iterlm{v}_2(y)}{\iterl{u}_2(x)\,\iterl{v}_2(y)}\right)\,
	\frac{\iterl{u}_2(x)}{\iterlm{u}_2(x)}\iterlm{u}_2(x)\,
	\iterl{v}_2(y)\,\sKernel(x,y)\,\diff \mu_2(x)\,\diff \nu(y).
	\label{eq:ThreeConvergenceProofKLStep}
	\end{align}
	Now apply \eqref{eq:ScalingRatioBound} at iterate $(\ell-1)$ and for $J = \{2,3\}$ to obtain that for $\mu$-a.e.~$x \in X_2, x_3 \in X_3$ and for $\nu$-a.e.~$y \in Y$ it holds
	\begin{align}
	\label{eq:ThreeConvergenceProofScalingBound}
	\iterlm{u}_2(x)\,\sKernel(x,y) & \geq \iterlm{u}_3(x_3)\,\sKernel(x_3,y)\,\cdot\exp\left(-2\|c\|/\veps\right).
	\end{align}
	Further, note that for $a \geq 0$ the map $\Phi : s \mapsto \big(\varphi\big(\tfrac{a}{s}\big) \cdot s\big)$ is convex on $\R_{++}$ and thus by Jensen's inequality one has for $\iterl{\nu}_2$-a.e.~$y \in Y$ (such that $\iterl{v}_2(y)>0$ almost surely) that
	\begin{align}
	\label{eq:ThreeConvergenceProofJensen}
	\int_{X} \varphi\left(\frac{\iterlm{u}_2(x)\,\iterlm{v}_2(y)}{\iterl{u}_2(x)\,\iterl{v}_2(y)}\right)\,
	\frac{\iterl{u}_2(x)}{\iterlm{u}_2(x)}\,\diff \mu_2(x)
	\geq
	\|\mu_2\| \cdot \varphi\left(\frac{\iterlm{v}_2(y)}{q\,\iterl{v}_2(y)}\right)\,q
	\end{align}
	where we now fix the value $q \assign \fint_X \frac{\iterl{u}_2}{\iterlm{u}_2}\,\diff \mu_2$ (which is finite because of the lower and upper boundedness of $\iterl{u}_2$ and $\iterlm{u}_2$, cf.~Lemma \ref{lem:ScalingLemma} (\ref{item:ScalingLemma:Bounds},\ref{item:ScalingLemma:Basic})). We recall the notation for the normalized integral from \eqref{eq:fint}.
	Plugging first \eqref{eq:ThreeConvergenceProofScalingBound} and then \eqref{eq:ThreeConvergenceProofJensen} into \eqref{eq:ThreeConvergenceProofKLStep} one obtains for $\mu$-almost all $x_3 \in X_3$
	\begin{equation}
	\KL(\iterlm{\pi}_2|\iterl{\pi}_2)
	\geq e^{-\frac{2\|c\|}{\veps}} \cdot \|\mu_2\|
	\cdot \int_{Y} \varphi\left(\frac{\iterlm{v}_2(y)}{q\,\iterl{v}_2(y)}\right)\,
	q\,\iterlm{u}_3(x_3)\,\iterl{v}_2(y)\,\sKernel(x_3,y)\,\diff \nu(y).
	\label{eq:ThreeConvergenceProofMiddle}
	\end{equation}
	Now we work backwards from \eqref{eq:ThreeConvergenceProofLowerBegin}. With the specified choices for $\hat{u}$ and $\hat{v}$ made in \eqref{eq:PiHat3Cells}, one finds for $\hat{\pi}$-a.e.~$(x,y) \in X_3 \times Y$
	\begin{align*}
	\RadNikD{\iterl{\pi}_3}{\hat{\pi}}(x,y) &
	= \frac{\iterl{u}_3(x) \cdot \iterl{v}_3(y)}{q \cdot \iterlm{u}_3(x) \cdot \iterl{v}_{\{1,2\}}(y)}
	= \frac{\iterlm{u}_3(x) \cdot \iterlm{v}_3(y)}{q \cdot \iterlm{u}_3(x) \cdot \iterl{v}_2(y)}
	= \frac{\iterlm{v}_2(y)}{q \cdot \iterl{v}_2(y)}
	\end{align*}
	where we have used that $\iterl{u}_3 \otimes \iterl{v}_3= \iterlm{u}_3 \otimes \iterlm{v}_3$ (since $\iterl{\pi}_3=\iterlm{\pi}_3$ due to $\{3\} \in \partA$) and $\iterlm{v}_3=\iterlm{v}_2$ (since $\{2,3\} \in \partB$).
	Consequently, with \eqref{eq:ThreeConvergenceProofMiddle}
	\begin{align*}
	\KL(\iterl{\pi}_3|\hat{\pi}_3) & = \int_{X \times Y} \varphi\left(\frac{\iterlm{v}_2(y)}{q \cdot \iterl{v}_2(y)}\right)
	q\,
	\iterlm{u}_3(x)\,\iterl{v}_2(y)\,\sKernel(x,y)\,\diff \mu_3(x)
	\diff \nu(y) \\
	& \leq e^{\frac{2\|c\|}{\veps}} \cdot \frac{1}{\|\mu_2\|} \int_{X}
	\diff \mu_3(x) \,\KL(\iterlm{\pi}_2|\iterl{\pi}_2)
	\end{align*}
	and with \eqref{eq:ThreeConvergenceProofUpperBegin} and \eqref{eq:ThreeConvergenceProofLowerBegin}
	\begin{align*}
	\Delta(\iterl{\pi}) & \leq \exp\left(\tfrac{2\|c\|}{\veps}\right) \cdot \frac{\|\mu_3\|}{\|\mu_2\|} \left(\Delta(\iterlm{\pi})-\Delta(\iterl{\pi})\right).
	\end{align*}
	For $\ell$ even one proceed analogously to obtain
	\begin{align*}
	\Delta(\iterl{\pi}) & \leq \exp\left(\tfrac{2\|c\|}{\veps}\right) \cdot \frac{\|\mu_1\|}{\|\mu_2\|} \left(\Delta(\iterlm{\pi})-\Delta(\iterl{\pi})\right).
	\end{align*}
	Hence, for any $\ell > 0$, using $\|\mu_{\{1,3\}}\| \geq \max\{\|\mu_1\|,\|\mu_3\|\}$, one obtains
	\begin{align*}
	\Delta(\iterl{\pi}) \leq \exp\left(\tfrac{2\|c\|}{\veps}\right) \tfrac{\|\mu_{\{1,3\}}\|}{\|\mu_2\|}
	\cdot \big(\Delta(\iterlm{\pi})-\Delta(\iterl{\pi})\big)
	\end{align*}
	which implies \eqref{eq:ThreeConvergenceKLDecrement}. Eventually, the sequence $(\iterl{\pi})_\ell$ is bounded and each element lies in $\Pi(\mu,\nu)$. As $\Pi(\mu,\nu)$ is weakly-$\ast$ closed, $(\iterl{\pi})_\ell$ must have a cluster point $\iter{\pi}{\infty} \in \Pi(\mu,\nu)$.
	By \eqref{eq:ThreeConvergenceKLDecrement} and since $\Delta$ is weakly-$\ast$ lower semi-continuous one has
	\begin{align*}
	\Delta(\iter{\pi}{\infty}) \leq \liminf_{\ell \to \infty} \Delta(\iterl{\pi})=0\,.
	\end{align*}
	Since $[\Delta(\pi)=\KL(\pi|\piOpt)=0]$ $\Leftrightarrow$ $[\pi=\piOpt]$ one has $\iter{\pi}{\infty} = \piOpt$.
	This also implies that \emph{every} cluster point of $(\iterl{\pi})_\ell$ must equal $\piOpt$ and that thus the sequence is converging.
\end{proof}

\subsection{General decompositions}
\label{sec:ConvergenceGeneral}
Now let $I$ be a basic partition of $(X,\mu)$ into $N$ basic cells and let $\partA$ and $\partB$ be two composite partitions of $I$.
In the special case of three cells discussed in the previous Section, convergence of the algorithm was driven by the overlap of the two partitions on the middle cell.
In the general case this overlap structure will be captured by the \emph{partition graph} and related auxiliary objects that we introduce now.

\begin{definition}[Partition graph, cell distance and shortest paths]\hfill
	\label{def:PartitionGraph}
	\begin{enumerate}[(i)]
	\item The partition graph is given by the vertex set $I$ (i.e.~each basic cell is represented by one vertex) and the edge set
	\begin{align*}
	E \assign \left\{ \vphantom{\sum} (i,j) \in I \,\middle|\, \exists\, J \in \partA \cup \partB \tn{ such that } \{i,j\} \subset J \right\},
	\end{align*}
	i.e.~there is an edge between two basic cells if they are part of the same composite cell in either of the two composite partitions $\partA$ or $\partB$.
	\label{item:PartitionGraph}
	\item We assume that no two basic cells are simultaneously contained in a composite cell of $\partA$ and $\partB$, i.e.~there exist no $i,j \in I$, $J_A \in \partA$, $J_B \in \partB$ such that $i,j \in J_A$ and $i,j \in J_B$ (otherwise $i$ and $j$ should be merged into one basic cell). This means that every edge in the partition graph is associated to precisely one of the two composite partitions.
	\item We assume that the partition graph is connected.
	\label{item:Connected}
	\item We denote by $\dist : I \times I \to \N_0$ the discrete metric on this graph induced by shortest paths, where each edge has length 1.
	\label{item:GraphMetric}
	\item Let $J_0 \in \partA$ be a selected composite cell. We define the \emph{cell distance}
	\begin{align*}
	D : I & \to \N_0,\quad  i \mapsto \dist(i,J_0) = \min\{ \dist(i,j) \,|\, j \in J_0\}
	\end{align*}
	and write $M \assign \max\{D(i) \,|\, i \in I\}$.
	\label{item:CellDistance}
	\item For each $i \in I$ we select one shortest path in the graph from $J_0$ to $i$, which we represent by a tuple $(n_{i,k})_{k=0}^{D(i)}$ of elements in $I$ with $n_{i,0} \in J_0$, $n_{i,D(i)}=i$, and $D(n_{i,k})=k$ for $k \in \{0,\ldots,D(i)\}$.
We refer to Fig.~\ref{fig:PartitionGraphD} for an illustration of such a construction. One can easily verify that, within any shortest path, the basic cells $n_{i,k}$ and $n_{i,k-1}$ belong to a common composite cell of partition $\partA$ when $k$ is even, and $\partB$ when $k$ is odd, or equivalently, of $\iter{\partGeneric}{\ell-k}$ for some odd $\ell> M$, see Fig.~\ref{fig:PartitionGraphIterations}.
	\label{item:ShortestPaths}
	\end{enumerate}
\end{definition}

Let us fix for the rest of this Section an odd iterate $\ell$ so that $\iterl{\partGeneric}=\partA$ (the same will then apply for $\ell$ even by swapping the roles of $\partA$ and $\partB$).

\begin{figure}[t]
	\centering
	\includegraphics[width=7cm]{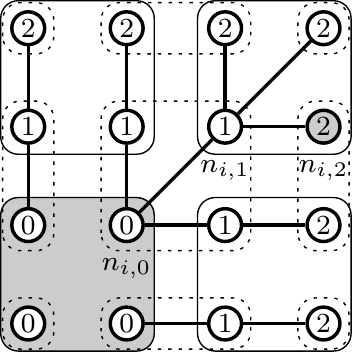}
	\caption{Illustration of partition graph and cell distance for a typical 2D setup. Vertices, the elements of $I$, are represented by circles, with cell distance $D(i)$ given as labels. Composite cells of $\partA$ are indicated by rectangles with solid lines, $J_0$ is highlighted with grey filling. Composite cells of $\partB$ are indicated by rectangles with dashed lines. Shortest paths from vertices to $J_0$ are indicated by black lines (these are often not unique).
		For the vertex highlighted in grey the indices $(n_{i,k})_{k=0}^{D(i)}$ are given below the corresponding nodes.}
	\label{fig:PartitionGraphD}
\end{figure}

\begin{figure}[t]
	\centering
	\includegraphics[height=6cm]{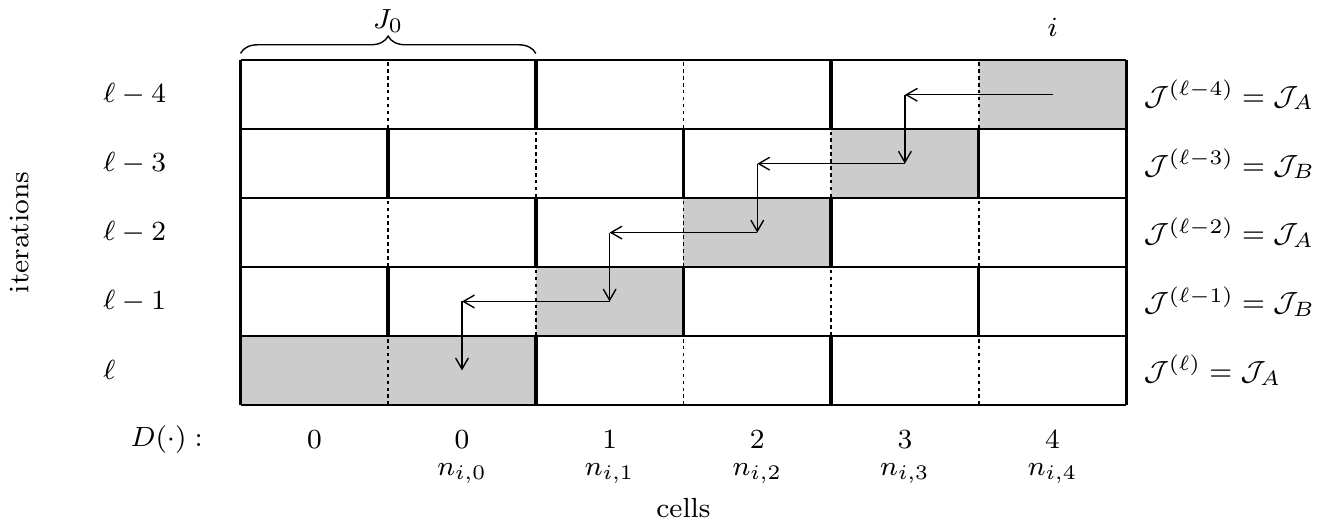}
	\caption{Illustration of proof strategies for Lemma \ref{lem:PrimalCandidate} and Theorem \ref{thm:NCellConvergence}. %
		The horizontal axis represents basic cells along the shortest path $(n_{i,k})_{k=0}^{D(i)}$ from  $J_0 \in \partA$ (left side) to some $i \in I$ (right side), vertical axis represents iterations for some odd $\ell>M$ (i.e.~$\iterl{\partGeneric}=\partA$). Solid vertical lines represent boundaries of composite cells at given iteration. %
		Grey shading of boxes illustrates the construction of $\tilde{\pi}$ in \eqref{eq:PiCandidate}. %
		Vertical arrows represents factors $q_{i,k}$, cf.~\eqref{eq:NCellProofUHat}. %
		Horizontal arrows represent edges in the partition graph and the equalities $\iter{v_{n_{i,k}}}{\ell-k}=\iter{v_{n_{i,k-1}}}{\ell-k}$ used in \eqref{eq:NCellProofDensityDecomposition}.}
	\label{fig:PartitionGraphIterations}
\end{figure}

\begin{theorem}[Linear convergence]
	\label{thm:NCellConvergence}
	Let $\ell > M$, with $M$ defined in Def.~\ref{def:PartitionGraph} \eqref{item:CellDistance}. Then, one finds
	\begin{align}
	\label{eq:NCellConvergenceKLDecrement}
	\Delta(\iterl{\pi}) \leq \frac{2MN\exp((6M+7)\,\|c\|/\veps)}{\muMin^{2M+1}+2MN\exp((6M+7)\,\|c\|/\veps)}
	\cdot \Delta(\iter{\pi}{\ell-M}).
	\end{align}
	where $\muMin \assign \min\{\|\mu_i\| | i \in I\}$. In particular, the domain decomposition algorithm converges to the optimal solution.
\end{theorem}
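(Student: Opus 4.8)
The plan is to follow the strategy of Remark~\ref{rem:ProofStrategyThree}, running the three-cell argument of Theorem~\ref{thm:ThreeConvergence} but with the partition graph organizing the bookkeeping. Fix an odd $\ell>M$ (the case $\ell$ even follows by symmetry, swapping $\partA\leftrightarrow\partB$ and choosing the root cell $J_0\in\partB$). Two estimates are produced and then chained. The \emph{lower} bound on the $M$-step decrement is essentially free: telescoping over $\ell-M+1,\dots,\ell$ together with Lemma~\ref{lem:SubOptKL} (applied at shifted iteration indices) and the cell decomposition of $\KL(\cdot|\cdot)$ established inside its proof gives $\Delta(\iter{\pi}{\ell-M})-\Delta(\iterl{\pi})=\sum_{k=1}^{M}\KL(\iter{\pi}{\ell-k}|\iter{\pi}{\ell-k+1})=\sum_{k=1}^{M}\sum_{i\in I}\KL(\iter{\pi}{\ell-k}_{i}|\iter{\pi}{\ell-k+1}_{i})$, so that each single elementary term $\KL(\iter{\pi}{\ell-k}_{i}|\iter{\pi}{\ell-k+1}_{i})$ with $1\le k\le M$ is dominated by $\Delta(\iter{\pi}{\ell-M})-\Delta(\iterl{\pi})$.

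The \emph{upper} bound is where the partition graph enters. As noted in Remark~\ref{rem:ProofStrategyThree}, Algorithm~\ref{alg:DomDec} supplies dual scalings only per composite cell, and these must be glued into a single global candidate $\tilde{\pi}=(\tilde{u}\otimes\tilde{v})\cdot\kernel$ to be fed into Lemma~\ref{lem:PDGap}. I would build it along the chosen shortest paths of Definition~\ref{def:PartitionGraph}: set $\tilde{v}:=\iter{v}{\ell}_{J_0}$ globally, and for a basic cell $i$ with $D(i)=d$ follow its path $n_{i,0}\in J_0,\dots,n_{i,d}=i$; since consecutive cells $n_{i,k-1},n_{i,k}$ lie in a common composite cell of $\iter{\partGeneric}{\ell-k}$ (Definition~\ref{def:PartitionGraph}, item \eqref{item:ShortestPaths}) they share the $Y$-scaling $\iter{v}{\ell-k}_{n_{i,k-1}}=\iter{v}{\ell-k}_{n_{i,k}}$, and this lets the scalings propagate outward provided one inserts at step $k$ a scalar correction $q_{i,k}$ equal to an averaged ratio of consecutive $u$-scalings along the path -- the exact analogue of the single factor $q$ in \eqref{eq:PiHat3Cells}, with $\tilde{u}\restr X_i$ ending up proportional to $\iter{u}{\ell-d}_i$. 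Lemma~\ref{lem:ScalingLemma} (items \eqref{item:ScalingLemma:Bounds}--\eqref{item:ScalingLemma:Basic}) guarantees these averages are finite and that $\tilde{u},\tilde{v}$ lie in $L^\infty_+$, and by construction $\tilde{\pi}$ coincides with $\iterl{\pi}$ on $X_{J_0}\times Y$, so Lemma~\ref{lem:PDGap} gives $\Delta(\iterl{\pi})\le\KL(\iterl{\pi}|\tilde{\pi})=\sum_{i:\,D(i)\ge1}\KL(\iterl{\pi}_i|\tilde{\pi}_i)$. (This construction and the density identities behind it are what I would isolate as the auxiliary Lemma~\ref{lem:PrimalCandidate}.)

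The core is then the per-cell estimate $\KL(\iterl{\pi}_i|\tilde{\pi}_i)\le C\sum_{k=1}^{D(i)}\KL(\iter{\pi}{\ell-k}_{n_{i,k-1}}|\iter{\pi}{\ell-k+1}_{n_{i,k-1}})$, obtained by unrolling the path exactly as in the three-cell computation \eqref{eq:ThreeConvergenceProofKLStep}--\eqref{eq:ThreeConvergenceProofMiddle}: one applies Jensen's inequality for the convex map $s\mapsto\varphi(a/s)\,s$ once per path edge (as in \eqref{eq:ThreeConvergenceProofJensen}) to convert the $x$-integration into the averaged ratios $q_{i,k}$, and uses the ratio bound \eqref{eq:ScalingRatioBound} together with the density bounds of Lemma~\ref{lem:ScalingLemma} (items \eqref{item:DensityAbsBound:Abs}--\eqref{item:DensityAbsBound:Rel}) to discharge the remaining spatial dependence and the kernel factors. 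Each of the up to $M$ edges contributes one factor $\exp(O(1)\|c\|/\veps)$ and one inverse power of $\muMin$, so $C$ grows like $\exp(O(M)\|c\|/\veps)\,\muMin^{-O(M)}$; carrying the precise exponents through the recursion produces $2\exp((6M+7)\|c\|/\veps)\,\muMin^{-(2M+1)}$. I expect this to be the main obstacle: conceptually it is just a chained version of the three-cell estimate, but controlling every scaling-factor ratio and normalization constant through $M$ successive gluings is delicate, and both the power $2M+1$ of $\muMin$ and the coefficient of $M$ in the exponent come out of this accounting.

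To finish, sum the per-cell estimate over the at most $MN$ pairs $(i,k)$ appearing in $\sum_{i:\,D(i)\ge1}\sum_{k=1}^{D(i)}$ and bound each elementary term by $\Delta(\iter{\pi}{\ell-M})-\Delta(\iterl{\pi})$ via the first paragraph, obtaining $\Delta(\iterl{\pi})\le 2MN\exp((6M+7)\|c\|/\veps)\,\muMin^{-(2M+1)}\big(\Delta(\iter{\pi}{\ell-M})-\Delta(\iterl{\pi})\big)$, which rearranges to \eqref{eq:NCellConvergenceKLDecrement}. Iterating this contraction separately along the even- and odd-indexed subsequences forces $\Delta(\iterl{\pi})\to0$; since $\Delta$ is weakly-$\ast$ lower semicontinuous and $(\iterl{\pi})_\ell$ is bounded with values in the weakly-$\ast$ closed set $\Pi(\mu,\nu)$, every cluster point $\pi$ satisfies $\Delta(\pi)=0$, i.e.\ $\pi=\pi^\ast$, so $\iterl{\pi}\to\pi^\ast$ -- exactly as at the end of the proof of Theorem~\ref{thm:ThreeConvergence}.
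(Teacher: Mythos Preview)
Your overall architecture is right (glue dual scalings along shortest paths, one Jensen step per edge, telescope the decrement), but there is a genuine missing step. You apply Lemma~\ref{lem:PDGap} directly to the current iterate $\iterl{\pi}$ against your glued candidate (call it $\hat{\pi}$, since the paper reserves $\tilde{\pi}$ for something else), and then claim a per-cell bound on $\KL(\iterl{\pi}_i|\hat{\pi}_i)$. But with $\hat{u}\restr X_i\propto\iter{u}{\ell-D(i)}_i$ as you correctly describe, the density on a cell $i$ with $d:=D(i)>0$ is
\[
\RadNikD{\iterl{\pi}_i}{\hat{\pi}_i}(x,y)=\frac{\iterl{u}_i(x)}{\iter{u}{\ell-d}_i(x)}\cdot\frac{\iterl{v}_i(y)}{\big(\textstyle\prod_k q_{i,k}\big)\,\iterl{v}_{J_0}(y)}.
\]
In the three-cell proof the first factor equals $1$ \emph{only} because $\iterl{\pi}_3=\iterlm{\pi}_3$ (the singleton $\{3\}\in\partA$ forces it); for a general basic cell $\iterl{\pi}_i\neq\iter{\pi}{\ell-d}_i$, and the ratio $\iterl{u}_i/\iter{u}{\ell-d}_i$ of $u$-scalings taken $d$ iterations apart is not controlled by the path-$\KL$ terms you invoke. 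Your unrolling handles the second ($y$-only) factor correctly, but this extra $x$-dependent factor blocks the per-cell estimate as stated.

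This is precisely what Lemma~\ref{lem:PrimalCandidate} is for, and you have misread its role: it is \emph{not} the dual-gluing construction. It builds a separate \emph{primal} candidate $\tilde{\pi}:=\sum_{i\in I}\iter{\pi}{\ell-D(i)}_i$, verifies $\tilde{\pi}\in\Pi(\mu,\nu)$ and $\KL(\tilde{\pi}|\pi^\ast)\ge\KL(\iterl{\pi}|\pi^\ast)$, so that one may bound $\Delta(\iterl{\pi})\le\KL(\tilde{\pi}|\hat{\pi})$ instead. Now the $u$-factors in $\RadNik{\tilde{\pi}_i}{\hat{\pi}_i}$ are both $\iter{u}{\ell-d}_i$ and cancel, leaving a pure $y$-ratio that telescopes along the path, cf.~\eqref{eq:NCellProofDensityDecomposition}. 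One then also needs Lemma~\ref{lem:PhiProductDecomposition} (an approximate triangle inequality for $\varphi$) to split $\varphi$ of this $d$-fold product into a sum of single-edge $\varphi$'s, an ingredient beyond the single Jensen step \eqref{eq:ThreeConvergenceProofJensen} that your sketch does not mention. With these two pieces inserted, the remainder of your outline (per-edge estimates via \eqref{eq:ScalingRatioBound} and Lemmas~\ref{lem:ScalingLemma}, \ref{lem:DensityMarginalBound}, summation over at most $MN$ pairs, rearrangement, and the weak-$\ast$ endgame) matches the paper.
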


\begin{remark}[Proof strategy]
For the three-cell case the proof relied on the fact that $\iterl{\pi}_3$ remained unchanged in an $A$-iteration, since $\{3\} \in \partA$ was a composite cell containing only basic cell $3$. This is no longer true for general decompositions. To overcome this, we need to replace parts of the primal iterate $\iterl{\pi}$ by partial transport plans from previous iterations (Lemma \ref{lem:PrimalCandidate}).
When bounding the sub-optimality by the decrement (cf.~Remark \ref{rem:ProofStrategyThree}) we also need an approximate triangle inequality for the $\KL$-divergence (Lemma \ref{lem:PhiProductDecomposition}).
\end{remark}

\begin{lemma}[Construction of primal upper bound candidate]
	\label{lem:PrimalCandidate}
	For $\ell > M$, $\ell$ odd, the measure
	\begin{align}
	\label{eq:PiCandidate}
	\tilde{\pi} & = \sum_{i \in I} \iter{\pi_i}{\ell-D(i)}
	\end{align}
	satisfies $\tilde{\pi} \in \Pi(\mu,\nu)$ and $\KL(\tilde{\pi}|\pi^\ast) \geq \KL(\iterl{\pi}|\pi^\ast)$.
\end{lemma}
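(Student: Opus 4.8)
The plan is to prove both assertions at once via a chain of intermediate couplings $\rho_0 = \iterl{\pi}, \rho_1, \dots, \rho_M = \tilde{\pi}$ that ``rolls back'' the basic cells one distance layer at a time, starting from the centre $J_0$. The single nontrivial input is a combinatorial statement about the partition graph, which I expect to be the main obstacle: \emph{for odd $\ell > M$ and $1 \leq t \leq M$, the set $A_t \assign \{ i \in I \mid D(i) \geq t \}$ is a union of composite cells of $\iter{\partGeneric}{\ell-t+1}$.} I would prove it by contradiction: if a composite cell $J \in \iter{\partGeneric}{\ell-t+1}$ contained both some $i$ with $D(i) \leq t-1$ and some $j$ with $D(j) \geq t$, then since $(i,j)$ is an edge of the partition graph we get $D(i) = t-1$, $D(j) = t$, and appending this edge to a shortest path $J_0 \to i$ yields a shortest path $J_0 \to j$ whose $t$-th (final) edge is $(i,j)$. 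By Definition \ref{def:PartitionGraph} \eqref{item:ShortestPaths} the $t$-th edge of any shortest path lies in a composite cell of $\partA$ if $t$ is even and of $\partB$ if $t$ is odd, while membership in $\iter{\partGeneric}{\ell-t+1}$ means (for odd $\ell$) lying in a composite cell of $\partA$ if $t$ is odd and of $\partB$ if $t$ is even — contradicting that every edge belongs to exactly one of the two composite partitions.

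With that in hand, I would set $\rho_t \assign \sum_{i \in I : D(i) \geq t} \iter{\pi_i}{\ell-t} + \sum_{i \in I : D(i) < t} \iter{\pi_i}{\ell-D(i)}$, so that $\rho_0 = \iterl{\pi}$ and $\rho_M = \tilde{\pi}$ by inspection. The transition $\rho_t \to \rho_{t-1}$ only affects the restriction to $X_{A_t} \times Y$, which by the combinatorial fact decomposes into composite cells $J \subseteq A_t$ of $\iter{\partGeneric}{\ell-t+1}$; on each such $J$ one has $\rho_t \restr (X_J \times Y) = \iter{\pi_J}{\ell-t}$ and $\rho_{t-1} \restr (X_J \times Y) = \iter{\pi_J}{\ell-t+1}$, two couplings with the same $X$-marginal $\mu_J$ and the same $Y$-marginal (because in iteration $\ell-t+1$ the $Y$-marginal on $X_J \times Y$ is held fixed, Algorithm \ref{alg:DomDec} lines \ref{line:YMarginal}--\ref{line:CompositePi}, so $\iter{\nu_J}{\ell-t+1} = \iter{\nu_J}{\ell-t}$). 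Hence all $\rho_t$ share the same marginals; since $\rho_0 = \iterl{\pi} \in \Pi(\mu,\nu)$, this yields $\rho_t \in \Pi(\mu,\nu)$ for every $t$ by induction, and in particular $\tilde{\pi} = \rho_M \in \Pi(\mu,\nu)$, which is the first claim.

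For the second claim I would note that, on each such $J$, $\iter{\pi_J}{\ell-t+1}$ is by construction the unique $\KL(\cdot|\kernel_J)$-minimiser over $\Pi(\mu_J, \iter{\nu_J}{\ell-t+1})$ (Algorithm \ref{alg:DomDec} line \ref{line:CompositePi}), whereas $\iter{\pi_J}{\ell-t}$ is merely a feasible competitor there; summing this inequality over $J \subseteq A_t$, using that $\rho_{t-1}$ and $\rho_t$ agree off $X_{A_t} \times Y$, and using additivity of $\KL(\cdot|\kernel)$ over the partition $\{X_i \times Y\}_{i\in I}$ (as in Lemma \ref{lem:ScalingKL}), gives $\KL(\rho_{t-1}|\kernel) \leq \KL(\rho_t|\kernel)$. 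Telescoping yields $\KL(\iterl{\pi}|\kernel) \leq \KL(\tilde{\pi}|\kernel)$, a finite quantity since $\tilde{\pi}$ is a sum of finitely many partial optimal entropic couplings. Finally, arguing exactly as in the proof of Lemma \ref{lem:SubOptKL}, for any $\sigma \in \Pi(\mu,\nu)$ with $\KL(\sigma|\kernel) < \infty$ one has $\KL(\sigma|\pi^\ast) = \KL(\sigma|\kernel) - \KL(\pi^\ast|\kernel)$ — here $\pi^\ast = u^\ast \otimes v^\ast \cdot \kernel$ with $u^\ast, v^\ast$ a.e.\ strictly positive by Proposition \ref{prop:EntropicOTBasic} \eqref{item:EntropicOTBasic:Bounds}, so $\sigma \ll \pi^\ast$ and $\int \log(u^\ast \otimes v^\ast)\,\diff\sigma$ depends only on the fixed marginals $\mu,\nu$. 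Applying this to $\sigma = \tilde{\pi}$ and to $\sigma = \iterl{\pi}$ and combining with the $\KL$-inequality above gives $\KL(\tilde{\pi}|\pi^\ast) \geq \KL(\iterl{\pi}|\pi^\ast)$. The bulk of the work is really the combinatorial layering statement; everything else is the familiar block-coordinate-descent monotonicity together with the ``Pythagorean'' identity for $\KL$ against the optimal coupling.
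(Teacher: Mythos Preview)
Your proof is correct and follows essentially the same approach as the paper: your intermediate couplings $\rho_t$ coincide with the paper's $\tilde{\pi}^k = \sum_{i \in I} \iter{\pi_i}{\ell-\min\{D(i),k\}}$, and both arguments hinge on the same combinatorial fact that $\{i : D(i) \geq t\}$ is a union of cells of $\iter{\partGeneric}{\ell-t+1}$ (which you prove carefully, while the paper merely asserts it). The only cosmetic difference is that for the $\KL$ inequality the paper works directly with $\KL(\cdot|\pi^\ast)$ on each composite cell, whereas you first establish monotonicity of $\KL(\cdot|\kernel)$ and then convert via the global Pythagorean identity $\KL(\sigma|\pi^\ast)=\KL(\sigma|\kernel)-\KL(\pi^\ast|\kernel)$; these are equivalent since the correction term depends only on the marginals.
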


\begin{proof}
	In the proof we assume that $\ell$ is odd (i.e.~an $A$-iteration was just completed). The proof for $\ell$ even works completely analogous by swapping the roles of $J_A$ and $J_B$.
	
	We start with the claim that $\tilde{\pi} \in \Pi(\mu,\nu)$. Since $\tilde{\pi}$ is a sum of non-negative measures it is non-negative. By construction of the algorithm one always has $\proj_X \iter{\pi_i}{m}=\mu_i$ for all iterations $m \in \N_0$ and thus
	\begin{align*}
	\proj_X \tilde{\pi} = \sum_{i \in I} \proj_X  \iter{\pi_i}{\ell-D(i)} = \sum_{i \in I} \mu_i=\mu\,.
	\end{align*}
	For $k \in \{0,\ldots,M\}$ we introduce the auxiliary measures
	\begin{align*}
	\tilde{\pi}^k & = \sum_{i \in I} \iter{\pi_i}{\ell - \min\{D(i),k\}}\,.
	\end{align*}
	Note that $\tilde{\pi}^0=\iterl{\pi}$ and $\tilde{\pi}^M=\tilde{\pi}$.
	Further, one obtains for $k \in \{1,\ldots,M\}$
	\begin{align*}
	\tilde{\pi}_i^k - \tilde{\pi}_i^{k-1} & = \begin{cases}
	\iter{\pi_i}{\ell-k}-\iter{\pi_i}{\ell-k+1} & \tn{if } D(i) \geq k,\\
	0 & \tn{else.}
	\end{cases} \\
	\tilde{\pi}^k-\tilde{\pi}^{k-1} & = \sum_{\substack{i \in I:\\D(i)\geq k}}
	\left(\iter{\pi_i}{\ell-k}-\iter{\pi_i}{\ell-k+1}\right)
	\end{align*}
	Assume now that $k$ is odd. Recalling that $\ell$ is assumed to be odd, then the set $J^k \assign \{i \in I\,:\,D(i) \geq k\}$ is a union of $A$-cells and the iteration step from $\iter{\pi}{\ell-k}$ to $\iter{\pi}{\ell-k+1}$ is an iteration on $A$-cells (since $\ell-k+1$ is odd, recall also Definition \ref{def:PartitionGraph}), i.e.,~for any $J \in \partA$ one has
	\begin{align*}
	\proj_Y \iter{\pi_J}{\ell-k} = \proj_Y \iter{\pi_J}{\ell-k+1}.
	\end{align*}
	Since $J^k$ is a union of $A$-cells we find therefore
	\begin{align*}
	\proj_Y \sum_{i \in J^k} \iter{\pi_i}{\ell-k} =
	\proj_Y \sum_{i \in J^k} \iter{\pi_i}{\ell-k+1}\,.
	\end{align*}
	Using this, we find that $\proj_Y \tilde{\pi}^k = \proj_Y \tilde{\pi}^{k-1}$ for $k \in \{1,\ldots,M\}$ and $k$ odd, and we can argue in complete analogy for even $k$ via $B$-cells. Consequently,
	\begin{align*}
	\proj_Y \tilde{\pi} = \proj_Y \tilde{\pi}^M = \proj_Y \tilde{\pi}^{M-1} = \ldots = \proj_Y \tilde{\pi}^0 = \proj_Y \iterl{\pi} = \nu
	\end{align*}
	and thus $\tilde{\pi} \in \Pi(\mu,\nu)$.
	
	Now we establish the $\KL$ bound. Again, assume that $k$ is odd, so that $J^k$ is a union of $A$-cells and the iteration step from $\iter{\pi}{\ell-k}$ to $\iter{\pi}{\ell-k+1}$ is an iteration on $A$-cells. Then for any $J \in \partA$ one finds
	\begin{align*}
	\KL(\iter{\pi_J}{\ell-k}|\pi^\ast) \geq \KL(\iter{\pi_J}{\ell-k+1}|\pi^\ast)
	\end{align*}
	and since $J^k$ is a union of $A$-cells one gets that
	\begin{align*}
	\KL(\tilde{\pi}^k|\pi^\ast)-\KL(\tilde{\pi}^{k-1}|\pi^\ast)
	&= \sum_{i \in I} \left[ \KL(\tilde{\pi}_i^k|\pi_i^\ast)-\KL(\tilde{\pi}_i^{k-1}|\pi_i^\ast) \right] \\
	&= \sum_{i \in J^k} \left[ \KL(\iter{\pi_i}{\ell-k}|\pi_i^\ast)-\KL(\iter{\pi_i}{\ell-k+1}|\pi_i^\ast) \right]
	\geq 0.
	\end{align*}
	Again, an analogous argument holds when $k$ is even. Consequently one has
	\begin{align*}
	\KL(\tilde{\pi}|\pi^\ast) = \KL(\tilde{\pi}^M|\pi^\ast)
	\geq \KL(\tilde{\pi}^{M-1}|\pi^\ast) \geq \ldots
	\geq \KL(\tilde{\pi}^0|\pi^\ast) = \KL(\iterl{\pi}|\pi^\ast).
	& \qedhere
	\end{align*}
\end{proof}

\begin{lemma}[Global density bound]
	\label{lem:DensityMarginalBound}
	Let $\ell > M$. Then for $\nu$-a.e.~$y \in Y$ and all $i \in I$ one has
	\begin{equation}\label{eq:RadNikLowerBound}
	\RadNik{\iterl{\nu}_i}{\nu}(y) \geq \frac{\muMin^{M+1}}{N}\exp\left(-2\,(M+1)\,\|c\|/\veps\right).
	\end{equation}
\end{lemma}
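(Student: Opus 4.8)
We want a uniform lower bound, independent of $y$ and $i$, on the density $\RadNik{\iterl{\nu}_i}{\nu}(y)$ for $\ell > M$. The plan is to exploit the one-step density propagation inequality \eqref{eq:RadNikLowerBound1Step} from Lemma \ref{lem:ScalingLemma}\eqref{item:DensityAbsBound:Rel} and to chain it along a shortest path in the partition graph back to the reference composite cell $J_0 \in \partA$.

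**Step 1: anchor on $J_0$.** First I would establish a lower bound for the basic cells $j \in J_0$. Fix an odd iteration index $\ell > M$ (so $\iterl{\partGeneric} = \partA$ and $J_0$ was just re-solved); the even case follows by symmetry. For such $\ell$, the cell $J_0$ is (a union of basic cells forming) a composite $A$-cell that was solved at iteration $\ell$, so the $Y$-marginal of $\iterl{\pi}_{J_0}$ is $\iterl{\nu}_{J_0} = \sum_{j \in J_0} \iterl{\nu}_j$, which is one of the partial marginals carried over from iteration $\ell-1$. Summing over $j \in J_0$ gives $\sum_{j \in J_0}\iterl{\nu}_j(Y) = \mu(X_{J_0}) \ge \muMin$ (if $|J_0| \ge 1$), hence $\sum_{j\in J_0}\RadNik{\iterl{\nu}_j}{\nu}(y)\,\diff\nu(y)$ integrates to at least $\muMin$. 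I then use \eqref{eq:RadNikLowerBound1Step} one step backward (or the absolute lower bound in \eqref{eq:DensityAbsBound} together with $\|\iterl{\nu}_j\|>0$) to turn an integral bound into a pointwise bound — more cleanly, I would apply \eqref{eq:RadNikLowerBound1Step} relating $\RadNik{\iterl{\nu}_j}{\nu}$ to $\RadNik{\iter{\nu_{j'}}{\ell-1}}{\nu}$ for $j'$ in the same $B$-cell and iterate down to iteration $0$ where $\proj_Y\iter{\pi}{0}=\nu$. Actually the cleanest route is: for $j\in J_0$, since $J_0$ was re-solved at $\ell$, and $\RadNik{\iterl{\nu}_{J_0}}{\nu}$ equals some marginal produced at step $\ell-1$ which in turn, by connectedness and repeated application of \eqref{eq:RadNikLowerBound1Step}, dominates a constant multiple of $\RadNik{\nu}{\nu}\equiv 1$. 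This yields $\RadNik{\iterl{\nu}_j}{\nu}(y) \ge c_0 := \muMin^{M+1}N^{-1}\exp(-2(M+1)\|c\|/\veps) / (\text{something})$ for $j\in J_0$; I would calibrate the exponents so the bound for $J_0$-cells already satisfies \eqref{eq:RadNikLowerBound}.

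**Step 2: propagate along shortest paths.** For a general $i \in I$, take the selected shortest path $(n_{i,k})_{k=0}^{D(i)}$ from $J_0$ to $i$ from Definition \ref{def:PartitionGraph}\eqref{item:ShortestPaths}, with $n_{i,0} \in J_0$ and $n_{i,D(i)} = i$. By the observation in that definition, $n_{i,k}$ and $n_{i,k-1}$ share a composite cell of $\iter{\partGeneric}{\ell-k}$, so I can apply \eqref{eq:RadNikLowerBound1Step} at iteration $\ell - k + 1$ with the pair $(n_{i,k}, n_{i,k-1})$ to get
\[
\RadNik{\iter{\nu}{\ell-k+1}_{n_{i,k}}}{\nu}(y) \ge \exp(-2\|c\|/\veps)\,\|\mu_{n_{i,k-1}}\| \cdot \RadNik{\iter{\nu}{\ell-k}_{n_{i,k-1}}}{\nu}(y).
\]
Wait — I need the indices to line up so the chain telescopes; the correct bookkeeping is to relate $\RadNik{\iter{\nu_{n_{i,D(i)}}}{\ell}}{\nu}$ down through $\RadNik{\iter{\nu_{n_{i,D(i)-1}}}{\ell-1}}{\nu}$, etc., ultimately reaching $\RadNik{\iter{\nu_{n_{i,0}}}{\ell-D(i)}}{\nu}$. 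Iterating $D(i) \le M$ times multiplies a factor $\exp(-2\|c\|/\veps)\,\muMin$ each step, giving a factor $\ge (\muMin \exp(-2\|c\|/\veps))^{D(i)} \ge (\muMin\exp(-2\|c\|/\veps))^M$, and then the anchor bound for $n_{i,0}\in J_0$ (at iteration $\ell - D(i) > 0$, still odd-or-even as needed) contributes another factor of roughly $\muMin\,\exp(-2\|c\|/\veps)/N$. Multiplying these gives exactly the stated $\muMin^{M+1}N^{-1}\exp(-2(M+1)\|c\|/\veps)$.

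**Main obstacle.** The delicate points are, first, the index/iteration bookkeeping: one must check that at each step of the chain the relevant pair of basic cells really does lie in a common composite cell of the composite partition active at that iteration, which is precisely why Definition \ref{def:PartitionGraph}\eqref{item:ShortestPaths} is phrased in terms of $\iter{\partGeneric}{\ell-k}$ and why the hypothesis $\ell > M$ is needed (so that all iteration indices $\ell - k$ for $k \le M$ are nonnegative and the algorithm has genuinely been run there). Second, obtaining the anchor bound for $J_0$: one needs that a composite cell that has just been re-solved has $Y$-marginal bounded below, and the simplest argument is that $\RadNik{\iterl{\nu}_{J_0}}{\nu}$ is a carried-over marginal whose total mass is $\ge \muMin$, combined with the ratio/boundedness estimates of Lemma \ref{lem:ScalingLemma} to pass from total mass to a pointwise bound — or, alternatively, running the one-step bound \eqref{eq:RadNikLowerBound1Step} all the way back to $\iter{\pi}{0}$ where $\proj_Y\iter{\pi}{0}=\nu$ gives $\RadNik{}{\nu}\ge$ (product of one-step constants) directly. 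Everything else is routine multiplication of the constants and tracking that $D(i)\le M$.
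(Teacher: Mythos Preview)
Your propagation idea in Step~2 is on the right track, but Step~1 (the anchor) is a genuine gap, and none of your three suggested routes to a pointwise lower bound on $\RadNik{\iter{\nu}{\ell'}_{n_{i,0}}}{\nu}(y)$ actually closes. Invoking ``connectedness and repeated application of \eqref{eq:RadNikLowerBound1Step}'' to dominate a constant multiple of $1$ is circular: that is exactly the lemma you are proving. Passing from the integral information $\|\iterl{\nu}_{J_0}\|=\|\mu_{J_0}\|$ to a pointwise bound would require a ratio estimate on $y\mapsto \RadNik{\iterl{\nu}_{J_0}}{\nu}(y)$, which Lemma~\ref{lem:ScalingLemma} does \emph{not} provide (the ratio bound \eqref{eq:ScalingRatioBound} is for $u$, and the lower bound on $v$ in part~\eqref{item:ScalingLemma:Bounds} is proportional to $\RadNik{\iterl{\nu}_J}{\nu}$ itself, hence useless here). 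Finally, iterating \eqref{eq:RadNikLowerBound1Step} all the way back to $\iterz{\pi}$ gives a constant that degrades like $(\muMin e^{-2\|c\|/\veps})^{\ell}$, not uniform in $\ell$, and still needs a pointwise lower bound on some $\RadNik{\iter{\nu}{0}_j}{\nu}$, which is unavailable for a general initialization $\iterz{\pi}\in\Pi(\mu,\nu)$.

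The paper sidesteps the anchor problem by a different mechanism. Rather than following a single shortest path to a fixed cell, it keeps the \emph{maximum} over all neighbours at each step: \eqref{eq:RadNikLowerBound1Step} gives
\[
\RadNik{\iterl{\nu}_i}{\nu}(y)\ \ge\ \muMin\, e^{-2\|c\|/\veps}\,\max_{j\in J}\,\RadNik{\iterlm{\nu}_j}{\nu}(y),
\]
with $J\in\iterl{\partGeneric}$ the composite cell containing $i$. Iterating, the set over which the maximum is taken grows (one can always stay put, so reachability is monotone) until it covers all of $I$. The decisive step is then pigeonhole: since $\sum_{j\in I}\RadNik{\iter{\nu}{m}_j}{\nu}(y)=1$ for every iterate $m$ and $\nu$-a.e.~$y$, one has $\max_{j\in I}\RadNik{\iter{\nu}{m}_j}{\nu}(y)\ge 1/N$. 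This averaging argument is the missing idea in your proposal; it replaces your anchor step entirely and yields a bound uniform in $\ell$ and $y$ without ever needing a pointwise estimate on any single designated cell.
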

\begin{proof}
	Let $J \in \iterl{\partGeneric}$ such that $i \in J$. We apply \eqref{eq:RadNikLowerBound1Step} for each $j \in J$ to obtain
	\begin{align*}
	\RadNik{\iterl{\nu}_i}{\nu}(y) \geq \exp(-2\|c\|/\veps)\,\muMin \cdot \max_{j \in J} \RadNik{\iterlm{\nu}_{j}}{\nu}(y)\,.
	\end{align*}
	for $\nu$-a.e.~$y \in Y$. Applying this bound recursively, the $\max$ runs over increasingly many cells and after $M+1$ applications of the bound we find:
	\begin{align*}
	\RadNik{\iterl{\nu}_i}{\nu}(y) \geq \exp(-2\,(M+1)\,\|c\|/\veps)\,\muMin^{M+1} \cdot \max_{j \in I} \RadNik{\iter{\nu}{\ell-M-1}_{j}}{\nu}(y)\,.
	\end{align*}
	Observe now that $\sum_{j \in I} \RadNik{\iter{\nu}{\ell-M-1}_{j}}{\nu}(y)=1$ and thus the maximum must be bounded from below by $1/N$.
\end{proof}

\begin{proof}[Theorem \ref{thm:NCellConvergence}]
	Let us set
	\begin{align}
	\label{eq:NCellDualCandidate}
	\hat{\pi} \assign (\hat{u} \otimes \hat{v}) \cdot \kernel \quad \text{with} \quad
	\hat{u}_i \assign \left(\prod_{k=0}^{D(i)-1} q_{i,k}\right) \cdot \iter{u_i}{\ell-D(i)} \quad \text{and} \quad \hat{v} & \assign \iterl{v_{J_0}}
	\end{align}
	where the factors $q_{i,k}$ are defined as
	\begin{align}
	\label{eq:NCellProofUHat}
	q_{i,k} \assign \fint_X \frac{\iter{u}{\ell-k}_{n_{i,k}}}{\iter{u}{\ell-k-1}_{n_{i,k}}} \,\diff \mu_{n_{i,k}}
	\end{align}
	and are finite thanks to the lower and upper boundedness of $u$-scalings, cf.~Lemma \ref{lem:ScalingLemma}, (\ref{item:ScalingLemma:Bounds},\ref{item:ScalingLemma:Basic}).
	Thus, with Lemmas \ref{lem:SubOptKL}, \ref{lem:PDGap} and $\tilde{\pi}$ from Lemma \ref{lem:PrimalCandidate}, one obtains
	\begin{align}
	\Delta(\iterl{\pi}) = \KL(\iterl{\pi}|\pi^\ast) & \leq \KL(\tilde{\pi}|\pi^\ast) \leq \KL(\tilde{\pi}|\hat{\pi})
	= \sum_{i \in I} \KL(\tilde{\pi}_i|\hat{\pi}_i)
	\label{eq:NCellProofSubObtBasic}
	\end{align}
	where one has
	\begin{align*}
	\KL(\tilde{\pi}_i|\hat{\pi}_i) & = \int_{X_i \times Y} \varphi\left(\RadNik{\tilde{\pi}_i}{\hat{\pi}_i}\right)\,\diff \hat{\pi}_i\,.
	\end{align*}
	For $i \in J_0$ one finds $\hat{\pi}_i$ almost everywhere that
	\begin{align*}
	\RadNikD{\tilde{\pi}_i}{\hat{\pi}_i} = \frac{\tilde{u}_i \otimes \tilde{v}_i}{\hat{u}_i \otimes \hat{v}}
	= \frac{\iter{u_i}{\ell} \otimes \iter{v_i}{\ell}}
	{\iter{u_i}{\ell} \otimes \iterl{v}_{J_0}}=1
	\qquad \tn{ and thus } \qquad
	\KL(\tilde{\pi}_i|\hat{\pi}_i)=0
	\end{align*}
	where we have used that $\hat{u}_i=\iterl{u}_i$ and $\iterl{v}_i=\iterl{v}_{J_0}$ when $i \in J_0$ (which implies $D(i)=0$).
	
	To control $\KL(\tilde{\pi}_i|\hat{\pi}_i)$ for $i \in I$ with $D(i)>0$ we will use the following bound for $k \in \{0,\ldots,D(i)-1\}$:
	\begin{align}
	& \KL(\iter{\pi}{\ell-k-1}_{n_{i,k}}|\iter{\pi}{\ell-k}_{n_{i,k}}) \nonumber \\
	= {} & \int_{X \times Y} \varphi\left(
	\tfrac{\iter{u}{\ell-k-1}_{n_{i,k}}(x) \cdot \iter{v}{\ell-k-1}_{n_{i,k}}(y)}
	{\iter{u}{\ell-k}_{n_{i,k}}(x) \cdot \iter{v}{\ell-k}_{n_{i,k}}(y)}
	\right)
	\iter{u}{\ell-k}_{n_{i,k}}(x) \cdot \iter{v}{\ell-k}_{n_{i,k}}(y) \, \sKernel(x,y) \, \diff \mu_{n_{i,k}}(x) \diff \nu(y) \nonumber \\
	\geq {} & e^{-\frac{2\|c\|}{\veps}} \cdot \int_{X \times Y} \varphi\left(
	\tfrac{\iter{u}{\ell-k-1}_{n_{i,k}}(x) \cdot \iter{v}{\ell-k-1}_{n_{i,k}}(y)}
	{\iter{u}{\ell-k}_{n_{i,k}}(x) \cdot \iter{v}{\ell-k}_{n_{i,k}}(y)}
	\right)
	\tfrac{\iter{u}{\ell-k}_{n_{i,k}}(x)}{\iter{u}{\ell-k-1}_{n_{i,k}}(x)}\iter{u}{\ell-k-1}_{n_{i,k}}(\hat{x}) \cdot \iter{v}{\ell-k}_{n_{i,k}}(y) \, \diff \mu_{n_{i,k}}(x) \diff \nu(y) \nonumber \\
	= {} & e^{-\frac{2\|c\|}{\veps}} \cdot \int_{Y} \left[ \int_X \varphi\left(
	\tfrac{\iter{u}{\ell-k-1}_{n_{i,k}}(x) \cdot \iter{v}{\ell-k-1}_{n_{i,k}}(y)}
	{\iter{u}{\ell-k}_{n_{i,k}}(x) \cdot \iter{v}{\ell-k}_{n_{i,k}}(y)}
	\right)
	\tfrac{\iter{u}{\ell-k}_{n_{i,k}}(x)}{\iter{u}{\ell-k-1}_{n_{i,k}}(x)} \, \diff \mu_{n_{i,k}}(x)\right]\iter{u}{\ell-k-1}_{n_{i,k}}(\hat{x}) \cdot \iter{v}{\ell-k}_{n_{i,k}}(y) \,\diff \nu(y) \nonumber \\
	\geq {} & e^{-\frac{2\|c\|}{\veps}}\,\|\mu_{n_{i,k}}\|\int_Y \varphi\left(
	\tfrac{\iter{v}{\ell-k-1}_{n_{i,k}}(y)}{q_{i,k}\, \iter{v}{\ell-k}_{n_{i,k}}(y)}
	\right) q_{i,k}\,\iter{u}{\ell-k-1}_{n_{i,k}}(\hat{x})\, \iter{v}{\ell-k}_{n_{i,k}}(y) \, \diff \nu(y)
	\label{eq:NCellProofKLUpper}
	\end{align}
	for $\mu_{n_{i,k}}$-a.e.~$\hat{x}\in X_{n_{i,k}}$, where in the first inequality we have used \eqref{eq:ScalingRatioBound} and the boundedness of $c$, and in the second inequality we have used Jensen's inequality as in \eqref{eq:ThreeConvergenceProofJensen} in the proof of Theorem \ref{thm:ThreeConvergence}.
	
	Now note that one has $\iter{v_{n_{i,k}}}{\ell-k}=\iter{v_{n_{i,k-1}}}{\ell-k}$ for $k \in \{1,\ldots,D(i)\}$, since $\{n_{i,k},n_{i,k-1}\} \in J$ for some $J \in \iter{\partGeneric}{\ell-k}$, cf.~Def.~\ref{def:PartitionGraph} \eqref{item:ShortestPaths} and Fig.~\ref{fig:PartitionGraphIterations}.
	Therefore, for $i$ with $D(i)>0$ and recalling $n_{i,D(i)} = i$, one obtains $\hat{\pi}_i$-almost everywhere that
	\begin{align}
	\RadNikD{\tilde{\pi}_i}{\hat{\pi}_i} & = \frac{\tilde{u}_i \otimes \tilde{v}_i}{\hat{u}_i \otimes \hat{v}_i}
	= \frac{\iter{u_i}{\ell-D(i)} \otimes \iter{v_i}{\ell-D(i)}}{
		\left(\prod_{k=0}^{D(i)-1} q_{i,k}\right)
		\cdot \iter{u_i}{\ell-D(i)}
		\otimes \iterl{v}_{J_0}}
	= \frac{\iter{v_{n_{i,D(i)}}}{\ell-D(i)}}{
		\left(\prod_{k=0}^{D(i)-1} q_{i,k}\right)
		\cdot \iterl{v}_{n_{i,0}}} \nonumber \\
	& = \left(\prod_{k=0}^{D(i)-1} q_{i,k}\right)^{-1} \cdot \frac{\iter{v_{n_{i,D(i)-1}}}{\ell-D(i)}}
	{\iterl{v}_{n_{i,0}}}
	\cdot \underbrace{\left( \prod_{k=1}^{D(i)-1} \frac{\iter{v_{n_{i,k-1}}}{\ell-k}}{\iter{v_{n_{i,k}}}{\ell-k}} \right)}_{=1}
	= \prod_{k=0}^{D(i)-1} \frac{\iter{v_{n_{i,k}}}{\ell-k-1}}
	{q_{i,k}\,\iter{v_{n_{i,k}}}{\ell-k}}\,.
	\label{eq:NCellProofDensityDecomposition}
	\end{align}
	So continuing the suboptimality bound from \eqref{eq:NCellProofSubObtBasic} we obtain
	\begin{align*}
	\Delta(\iterl{\pi}) & \leq \sum_{\substack{i \in I:\\D(i)>0}} \int_{X \times Y} \varphi\left(
	\prod_{k=0}^{D(i)-1} \frac{\iter{v_{n_{i,k}}}{\ell-k-1}}{q_{i,k}\,\iter{v_{n_{i,k}}}{\ell-k}}
	\right)\,\diff \hat{\pi}_i.
	\end{align*}
	In the following we now transform the `$\varphi$ of product' into a `sum of $\varphi$' via Lemma \ref{lem:PhiProductDecomposition}. 
	For each term we then use several auxiliary results to bound $\hat{\pi}_i$ from below by the appropriate $\iter{\pi}{\ell-k}_i$.
	Using the definition of $q_{i,k}$ in \eqref{eq:NCellProofUHat} and \eqref{eq:DensityBound} we find that
	\begin{align*}
	\frac{\iter{v_{n_{i,k}}}{\ell-k-1}}{q_{i,k}\,\iter{v_{n_{i,k}}}{\ell-k}} \leq  \frac{\exp(3\|c\|/\veps)}{\muMin}
	\assignRe L \geq 1
	\end{align*}
	and so by using Lemma \ref{lem:PhiProductDecomposition} one obtains for
	\begin{align*}
	C_1 & \assign 2\,D(i) \cdot L^{D(i)-1} \geq D(i) \cdot \max\{2,L^{D(i)-1}\}
	\end{align*}
	that
	\begin{align}
	\Delta(\iterl{\pi}) &\leq C_1 \cdot \sum_{\substack{i \in I:\\D(i)>0}} \sum_{k=0}^{D(i)-1}
	\int_{X \times Y} \varphi\left(
	\tfrac{\iter{v_{n_{i,k}}}{\ell-k-1}}{q_{i,k}\,\iter{v_{n_{i,k}}}{\ell-k}}
	\right) \,\diff \hat{\pi}_i.
	\label{eq:NCellProofDeltaAikSum}
	\end{align}
	Further, since $i = n_{i,D(i)}$ and $\{n_{i,D(i)}, n_{i,D(i)-1}\} \subset J$ for some $J \in \iter{\partGeneric}{\ell-D(i)}$ (cf.~Def.~\ref{def:PartitionGraph} \eqref{item:ShortestPaths}), by using \eqref{eq:ScalingRatioBound} one finds that for $\mu_i$-a.e.~$x \in X_i$
	\begin{align*}
	q_{i,D(i)-1}\cdot \iter{u}{\ell-D(i)}_i(x) &= \fint_X \frac{\iter{u_{n_{i,D(i)}}}{\ell-D(i)}(x) \cdot \iter{u}{\ell-(D(i)-1)}_{n_{i,D(i)-1}}(\hat{x})}{\iter{u}{\ell-D(i)}_{n_{i,D(i)-1}}(\hat{x})} \,\diff \mu_{n_{i,D(i)-1}}(\hat{x}) \\
	&\leq \exp(\|c\|/\veps) \fint_X \iter{u}{\ell-(D(i)-1)}_{n_{i,D(i)-1}}(\hat{x}) \,\diff \mu_{n_{i,D(i)-1}}(\hat{x})
	\end{align*}
	and therefore, applying iteratively this bound along the chain connecting $X_{i}$ to $X_{n_{i,0}}$, we obtain for $\mu_i$-a.e.~$x \in X_i$ and $\mu_{n_{i,0}}$-a.e.~$x' \in X_{n_{i,0}}$
	\begin{align}
	\hat{u}_i(x) & 
	= \left[\prod_{k=0}^{D(i)-1} q_{i,k}\right]\cdot \iter{u_i}{\ell-D(i)}(x)
	\leq e^{\frac{\|c\|}{\veps}} \cdot \fint_X \left[\prod_{k=0}^{D(i)-2} q_{i,k} \right] \iter{u}{\ell-D(i)+1}_{n_{i,D(i)-1}}(\hat{x}) \,\diff \mu_{n_{i,D(i)-1}}(\hat{x})
	\nonumber \\
	& \leq \dots \leq \exp(D(i)\,\|c\|/\veps) \cdot \fint_{X} \iterl{u}_{n_{i,0}}(\hat{x})\,\diff \mu_{n_{i,0}}(\hat{x}) \leq \exp((D(i)+1)\,\|c\|/\veps) \cdot \iterl{u}_{n_{i,0}}(x').
	\label{eq:NCellProofUVHatBoundA}
	\end{align}
	Combining successively the upper bound in \eqref{eq:DensityAbsBound}, \eqref{eq:RadNikLowerBound} and the lower bound in \eqref{eq:DensityAbsBound} one deduces for $({\mu}_{n_{i,0}} \otimes {\mu}_{n_{i,k}})$-a.e.~$(x, x') \in X_{n_{i,0}} \times X_{n_{i,k}}$ and $\nu$-a.e.~$y \in Y$ that
	\begin{align}
	\iterl{u}_{n_{i,0}}(x) \cdot \iterl{v}_{n_{i,0}}(y) &\leq \tfrac{\exp\big(2 \|c\|/\veps\big)}{\muMin} \leq \tfrac{N\exp\big((2M+4) \|c\|/\veps\big)}{\muMin^{M+2}}	\RadNik{\iter{\nu}{\ell-k}_{n_{i,k}}}{\nu}(y)
	\nonumber \\
	& \leq \tfrac{N\exp\big((2M+5) \|c\|/\veps\big)}{\muMin^{M+2}}\,\cdot\,\|\mu_{n_{i,k}}\| \cdot
	\iter{u}{\ell-k}_{n_{i,k}}(x') \cdot \iter{v}{\ell-k}_{n_{i,k}}(y)
	\label{eq:NCellProofUVHatBoundB}
	\end{align}
	and we set
	\begin{equation*}
	C_2 \assign \tfrac{N\exp\big((2M+5) \|c\|/\veps\big)}{\muMin^{M+2}}.
	\end{equation*}
	Eventually, applying again \eqref{eq:ScalingRatioBound}, we can easily see that
	\begin{equation}\label{eq:step3}
	\iter{u}{\ell-k}_{n_{i,k}}(x) \leq e^{\frac{2\,\|c\|}{\veps}} \cdot q_{i,k} \cdot \iter{u}{\ell-k-1}_{n_{i,k}}(x') \qquad \text{for $\mu_{n_{i,k}}$-a.e.~$x, x' \in X_{n_{i,k}}$}
	\end{equation}
	
	We now have all the ingredients to go back to \eqref{eq:NCellProofDeltaAikSum}. For every fixed $i \in I$ with $\hat{\pi}_i = \hat{u}_i \otimes \hat{v}_i \cdot \kernel_i$ and $0 \leq k \leq D(i)-1$, and for $\mu_{n_{i,0}}$-a.e.~$x' \in X_{n_{i,0}}$ and $\mu_{n_{i,k}}$-a.e.~$x'' \in X_{n_{i,k}}$, we compute
	\begin{align*}
	& \int_{X \times Y} \varphi\left(
	\tfrac{\iter{v_{n_{i,k}}}{\ell-k-1}}{q_{i,k}\,\iter{v_{n_{i,k}}}{\ell-k}}
	\right) \,\diff \hat{\pi}_i = \int_{X \times Y} \varphi\left(
	\tfrac{\iter{v_{n_{i,k}}}{\ell-k-1}(y)}{q_{i,k}\,\iter{v_{n_{i,k}}}{\ell-k}(y)}
	\right) \hat{u}_i(x)\hat{v}_i(y)\underbrace{\sKernel(x,y)}_{\leq1} \,\diff {\mu}_i(x) \diff\nu(y) \\
	& {}\stackrel{\eqref{eq:NCellProofUVHatBoundA}}{\leq} \|\mu_i\|e^{\frac{(M+1)\,\|c\|}{\veps}}\int_{Y} \varphi\left(
	\tfrac{\iter{v_{n_{i,k}}}{\ell-k-1}(y)}{q_{i,k}\,\iter{v_{n_{i,k}}}{\ell-k}(y)}
	\right) \cdot \iterl{u}_{n_{i,0}}(x') \cdot \iterl{v}_{n_{i,0}}(y) \, \diff\nu(y) \\
	&\stackrel{\eqref{eq:NCellProofUVHatBoundB}}{\leq} C_2 \|\mu_i\|\cdot \|\mu_{n_{i,k}}\|e^{\frac{(M+1)\,\|c\|}{\veps}}\int_{Y} \varphi\left(
	\tfrac{\iter{v_{n_{i,k}}}{\ell-k-1}(y)}{q_{i,k}\,\iter{v_{n_{i,k}}}{\ell-k}(y)}
	\right) \iter{u}{\ell-k}_{n_{i,k}}(x'') \cdot \iter{v}{\ell-k}_{n_{i,k}}(y) \, \diff\nu(y) \\
	&\stackrel{\eqref{eq:step3}}{\leq} C_2 \|\mu_i\|\cdot \|\mu_{n_{i,k}}\|e^{\frac{(M+3)\,\|c\|}{\veps}}\int_{Y} \varphi\left(
	\tfrac{\iter{v_{n_{i,k}}}{\ell-k-1}(y)}{q_{i,k}\,\iter{v_{n_{i,k}}}{\ell-k}(y)}
	\right) q_{i,k} \cdot \iter{u}{\ell-k-1}_{n_{i,k}}(x'') \cdot \iter{v}{\ell-k}_{n_{i,k}}(y) \, \diff\nu(y) \\
	&\stackrel{\eqref{eq:NCellProofKLUpper}}{\leq} C_2 \|\mu_i\|e^{\frac{(M+5)\,\|c\|}{\veps}} \KL(\iter{\pi}{\ell-k-1}_{n_{i,k}}|\iter{\pi}{\ell-k}_{n_{i,k}})
	\end{align*}
	This eventually yields
	\begin{align*}
	\Delta(\iterl{\pi}) & \leq C_1 \cdot C_2 \cdot e^{\frac{(M+5)\,\|c\|}{\veps}} \sum_{\substack{i \in I:\\D(i)>0}} \sum_{k=0}^{D(i)-1}
	\|\mu_i\| \KL\left(\iter{\pi}{\ell-k-1}_{n_{i,k}}|\iter{\pi}{\ell-k}_{n_{i,k}}\right) \nonumber \\
	& \leq C_1 \cdot C_2 \cdot e^{\frac{(M+5)\,\|c\|}{\veps}} \sum_{\substack{j \in I:\\D(j)<M}} \sum_{i \in I}
	\|\mu_i\| \KL\left(\iter{\pi}{\ell-D(j)-1}_{j}|\iter{\pi}{\ell-D(j)}_{j}\right) \nonumber \\
	& \leq C_1 \cdot C_2 \cdot e^{\frac{(M+5)\,\|c\|}{\veps}} \sum_{\substack{j \in I}} \sum_{k = 0}^{M-1} \KL\left(\iter{\pi}{\ell-k-1}_{j}|\iter{\pi}{\ell-k}_{j}\right) \nonumber \\
	& = C_1 \cdot C_2 \cdot e^{\frac{(M+5)\,\|c\|}{\veps}} \cdot
	\left(
	\Delta\big(\iter{\pi}{\ell-M}\big)-\Delta\big(\iter{\pi}{\ell}\big)
	\right)
	\end{align*}
	where we used in the second line that $D(j)=k$ if $j=n_{i,k}$ for $j \in I$ (since $n_{i,k}$ is at the $k$-th step of the shortest path to cell $i$).
	This yields the sought-after bound:
	\begin{equation*}
	\Delta(\iterl{\pi}) \leq \frac{2MNe^{\frac{(6M+7)\,\|c\|}{\veps}}}{\muMin^{2M+1}} \cdot \left(
	\Delta\big(\iter{\pi}{\ell-M}\big)-\Delta\big(\iter{\pi}{\ell}\big)
	\right).
	\end{equation*}
	Convergence of the iterates to the optimal solution follows as in Theorem \ref{thm:ThreeConvergence}.
\end{proof}

\subsection{Numerical worst-case examples}
\label{sec:NumericsWorstCase}
In the preceding Sections we derived bounds on the convergence rate based on various worst-case estimates. But convergence might actually be much faster. In this Section we provide numerical examples that demonstrate that, while the pre-factors in our bounds might not be tight, in the three-cell-case the qualitative dependency of the convergence rate on the parameters $\veps$ and the mass in the middle cell $\mu(X_2)$ are accurate; and that convergence slows down in the general case as the maximum distance $M$ from the root cell $J_0$ in the partition graph increases.

\begin{figure}[t]
\centering
\begin{tikzpicture}[x=1cm,y=1cm]
	{\small
	\begin{scope}[x={(0.7,0)},y={(0,0.7)}]

	\foreach \x in {1,2} {
		\draw[black, line width=0.5pt] (0,\x) -- ++ (3,0);
		\draw[black, line width=0.5pt] (\x,0) -- ++ (0,3);
	}
	\draw[black, line width=1pt] (0,0) rectangle (3,3);
	\foreach \x/\y/\c in {0/0/0,1/0/10,2/0/1,%
0/1/10,1/1/0,2/1/10,%
0/2/1,1/2/10,2/2/0} {
		\node at (\x+0.5,\y+0.5) []{\c};
	}
	\foreach \x in {1,2,3} {
	\node at (\x-0.5,0) [anchor=north]{$x_\x$};
	\node at (3,\x-0.5) [anchor=west]{$y_\x$};
	}
	\node at (0,1.5) [anchor=east]{$c=$};
	
	\begin{scope}[shift={(6,0)}]
	\foreach \x in {1,2} {
		\draw[black, line width=0.5pt] (0,\x) -- ++ (3,0);
		\draw[black, line width=0.5pt] (\x,0) -- ++ (0,3);
	}
	\draw[black, line width=1pt] (0,0) rectangle (3,3);
	\foreach \x/\y/\c in {%
0/0/0,1/0/0,2/0/p,%
0/1/0,1/1/q,2/1/0,%
0/2/p,1/2/0,2/2/0} {
		\node at (\x+0.5,\y+0.5) []{$\c$};
	}
	\node at (0,1.5) [anchor=east]{$\iterz{\pi}=$};
	\end{scope}
	
	\begin{scope}[shift={(12,0)}]
		\foreach \i/\x/\y in {2/0/1.5,1/4/0.5,3/4/2.5} {
			\coordinate (i\i) at (\x,\y);
		}
		\foreach \i/\j/\l/\a in {2/1/10/below,2/3/10/above,1/3/1/right} {
			\draw[black,line width=0.75pt] (i\i) -- (i\j) node[midway,\a]{\l};
		}
		\foreach \i/\a in {2/east,1/north,3/south} {
			\draw[black,line width=1pt,fill=black!50!white] (i\i) circle[radius=2pt];
			\node at (i\i) [anchor=\a,inner sep=5pt]{\i};
		}
	\end{scope}
	
	\draw[black,line width=0.5pt,dash pattern=on 2pt off 2pt] (-2,-1) -- ++ (20,0);
	\begin{scope}[shift={(2,-6.5)}]
	\foreach \x in {1,2,3,4} {
		\draw[black, line width=0.5pt] (0,\x) -- ++ (5,0);
		\draw[black, line width=0.5pt] (\x,0) -- ++ (0,5);
	}
	\draw[black, line width=1pt] (0,0) rectangle (5,5);
	\foreach \x in {0,1,2,3,4} {
	\foreach \y in {0,1,2,3,4} {
		\node at (\x+0.5,\y+0.5) []{\ifthenelse{\x=\y}{0}{%
		\ifthenelse{\x\y=04}{1}{%
		\ifthenelse{\x\y=40}{1}{10}}}};
	}
	}
	\node at (0,2.5) [anchor=east]{$c=$};		
	\end{scope}
	\begin{scope}[shift={(10,-6.5)}]
	\foreach \x in {1,2,3,4} {
		\draw[black, line width=0.5pt] (0,\x) -- ++ (5,0);
		\draw[black, line width=0.5pt] (\x,0) -- ++ (0,5);
	}
	\draw[black, line width=1pt] (0,0) rectangle (5,5);
	\foreach \x in {0,1,2,3,4} {
	\foreach \y in {0,1,2,3,4} {
		\node at (\x+0.5,\y+0.5) []{\ifthenelse{\x=\y}{%
		\ifthenelse{\x=0}{0}{%
		\ifthenelse{\x=4}{0}{1}}}{%
		\ifthenelse{\x\y=04}{1}{%
		\ifthenelse{\x\y=40}{1}{0}}}};
	}
	}
	\node at (0,2.5) [anchor=east]{$\iterz{\pi}=\tfrac{1}{5} \cdot$};		
	\end{scope}

	\end{scope}
	}
\end{tikzpicture}
\caption{Setup for worst-case examples. Top row: three cells, Example \ref{ex:WorstCaseThree}, cost $c$, initial coupling $\iterz{\pi}$ and embedding of $X=Y=\{1,2,3\}$ into $\R^2$ such that $c$ becomes the distance function. The domain decomposition algorithm must exchange the masses of the first and third column of $\iterz{\pi}$.\newline
Bottom row: cost $c$ and initial coupling $\iterz{\pi}$ for chain setup, Example \ref{ex:WorstCaseChain}, for $N=5$.}
\label{fig:WorstCaseSetup}
\end{figure}

\begin{figure}[t]
\centering
\includegraphics[]{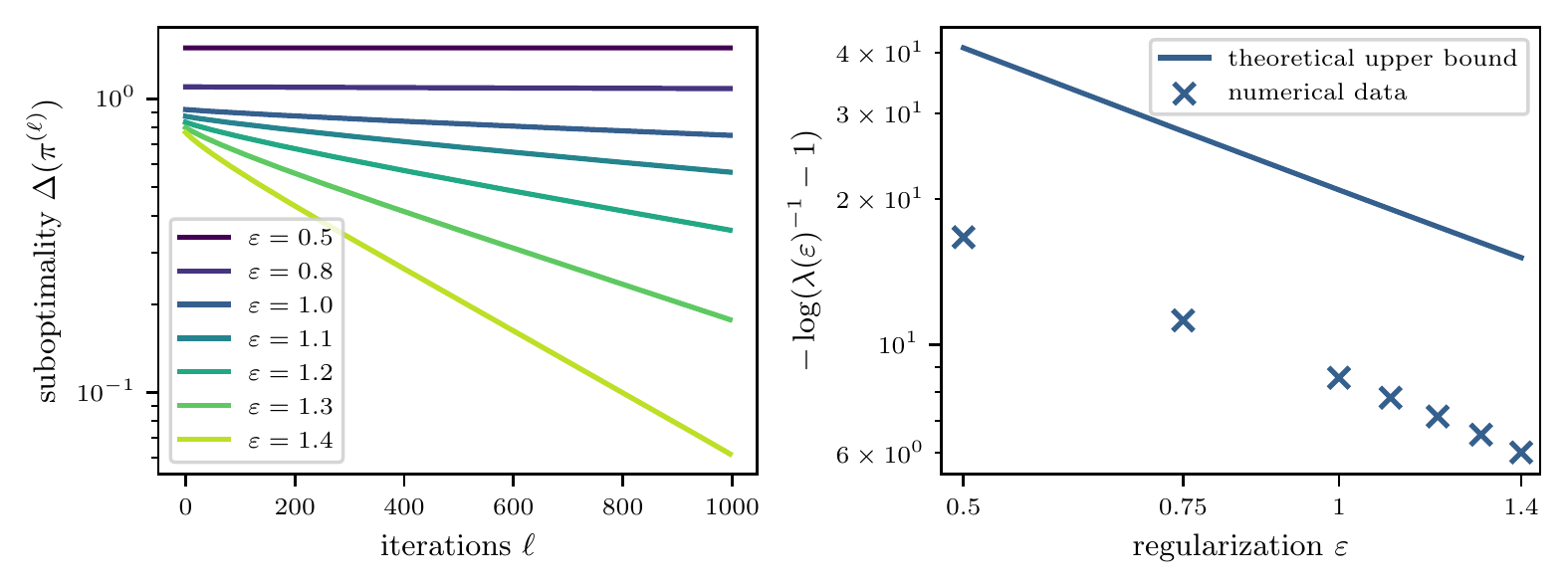}
\caption{Results for $\veps$-dependency in three-cell worst-case example, see Example \ref{ex:WorstCaseThree}. Left: Sub-optimality $\Delta(\iterl{\pi})$ over iterations for various values of $\veps$. The values converge to zero linearly. %
	Right: We extract the linear contraction factor $\lambda(\veps)$ from the left plot and compare it to the theoretical bound from Theorem \ref{thm:ThreeConvergence}. This is visualized best in the form $-\log(\lambda(\veps)^{-1}-1)$ for which the bound yields $\tfrac{2\|c\|}{\veps}-\log(q/(1-q))$, see \eqref{eq:LambdaBound}. In particular, the $1/\veps$-behaviour is accurate and the pre-factor appears to be off only by a factor of $\approx 2$.}
\label{fig:WorstCaseThreeEps}
\end{figure}

\begin{example}[Three cells, dependency on $\veps$]
\label{ex:WorstCaseThree}
Let $X=Y=\{1,2,3\}$ with cost $c$ as given in Figure \ref{fig:WorstCaseSetup}. This configuration can be obtained with embeddings of $X$ and $Y$ into $\R^2$. Set $\mu=\nu=p \cdot \delta_{1} + q \cdot \delta_2 + p \cdot \delta_3$ for $q \in (0,1)$, $p=\tfrac{1-q}{2}$. We choose $q=0.3$.
Basic and composite partitions are chosen as in the previous three-cell examples (Example \ref{ex:ThreeCells}): $X_i=\{i\}$ for $i \in I=\{1,2,3\}$, $\partA=\{\{1,2\},\{3\}\}$, $\partB=\{\{1\},\{2,3\}\}$.

The unique optimal coupling in the unregularized case is given by $\pi^\ast=p \cdot \delta_{(1,1)} + q \cdot \delta_{(2,2)} + p \cdot \delta_{(3,3)}$. As initial coupling for the algorithm we pick $\iterz{\pi} \assign p \cdot \delta_{(1,3)} + q \cdot \delta_{(2,2)} + p \cdot \delta_{(3,1)}$. So the mass in cell $X_1$ must be moved into cell $X_3$ and vice versa.

In the unregularized case $\iterz{\pi}$ would be a fixed-point of the iterations, since $\iterz{\pi}$ is partially optimal on $X_{\{1,2\}}$ and $X_{\{2,3\}}$. This provides a simple example that the unregularized domain decomposition algorithm requires additional assumptions for convergence to the global solution.

In the regularized setting, some mass will also be put onto the points $(1,2)$, $(2,1)$, $(2,3)$, and $(3,2)$ despite the high cost, thus leading to the possibility to move mass between $X_1$ to $X_3$ via $X_2$ and thus eventually solving the problem. But this mass tends to zero exponentially as $\veps\to 0$ thus resulting in an exponential number of iterations.

Numerical results are summarized in Figure \ref{fig:WorstCaseThreeEps}.
To estimate $\Delta(\iterl{\pi})$ we obtain a high precision approximate solution $\pi^\ast$ by applying a single Sinkhorn algorithm to the problem. Since this can directly exchange mass between cells 1 and 3, for the single solver this problem is not particularly difficult (the same applies to the other examples in this Section).
Let $\lambda(\veps)$ be the empirical contraction factor for the sub-optimality. Theorem \ref{thm:ThreeConvergence} yields the bound
\begin{align}
	\lambda(\veps) \leq \left(1+\exp(-2\|c\|/\veps) \cdot \tfrac{q}{1-q} \right)^{-1}
	\,\, \Leftrightarrow \,\,
	-\log(\lambda(\veps)^{-1}-1) \leq \tfrac{2\|c\|}{\veps} - \log(q/(1-q)).
	\label{eq:LambdaBound}
\end{align}
As indicated in Figure \ref{fig:WorstCaseThreeEps}, the $1/\veps$-dependency accurately describes the convergence behaviour.
\end{example}

\begin{figure}[t]
\centering
\includegraphics[]{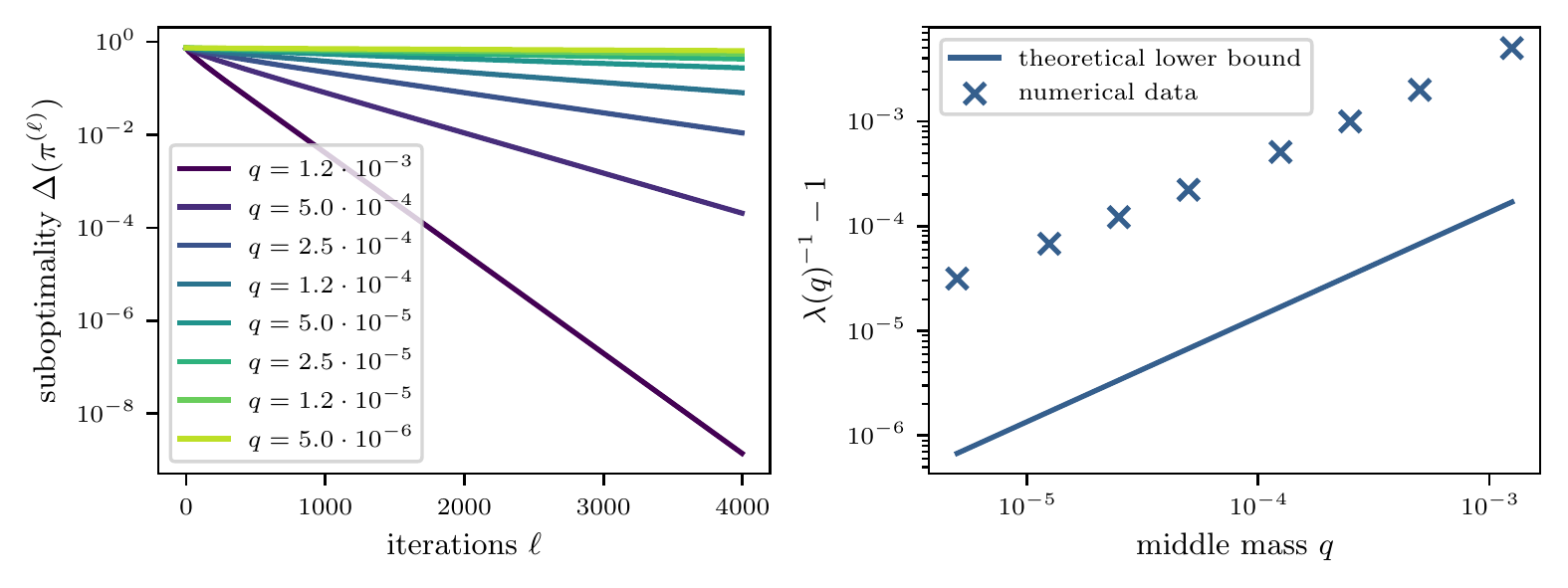}
\caption{Results for $q$-dependency in three-cell worst-case example, see Example \ref{ex:WorstCaseThreeQ}. Left: Sub-optimality $\Delta(\iterl{\pi})$ over iterations for various values of $q$. The values converge to zero linearly. %
	Right: We extract the linear contraction factor $\lambda(q)$ from the left plot and compare it to the theoretical bound from Theorem \ref{thm:ThreeConvergence}. This is visualized best in the form $\lambda(q)^{-1}-1$ for which the bound yields $\exp(-2\|c\|/\veps) \cdot \frac{q}{1-q}$, see \eqref{eq:LambdaBoundQ}. In particular, the proportionality to $q$ (for small values) is accurate. The pre-factor appears to be off by a factor of $\approx 50$.}
\label{fig:WorstCaseThreeQ}
\end{figure}

\begin{example}[Three cells, dependency on $q$]
	\label{ex:WorstCaseThreeQ}
	We revisit the previous Example \ref{ex:WorstCaseThree}, but now we fix the regularization parameter $\veps=10$ and vary the mass $q$ in the middle cell.
	Intuitively, far from the optimal solution, the mass exchanged between the cells in each iteration should be proportional to $q$ and thus for small $q$ the number of required iterations should be approximately proportional to $q^{-1}$. This would imply that for small $q$ the contraction ratio $\lambda(q)$ scales as $\exp(-C\,q) \approx 1-C\,q$ for some constant $C$.
	For small $q$ this matches the behaviour of the bound of Theorem \ref{thm:ThreeConvergence}, which implies, cf.~\eqref{eq:LambdaBound},
	\begin{align}
		\label{eq:LambdaBoundQ}
		\lambda(q)^{-1}-1 \geq \exp(-2\|c\|/\veps) \cdot \frac{q}{1-q}.
	\end{align}
	Numerical results are summarized in Figure \ref{fig:WorstCaseThreeQ}. In our setting the proportionality between $\lambda(q)^{-1}-1$ and $q$ is confirmed numerically.
	Note that we chose $\veps=10$ relatively high. For smaller values we observed that the mass exchanged between the cells in each iterations was no longer proportional to $q$ and thus the interplay between the parameters $\veps$ and $q$ became more complicated.
\end{example}

\begin{figure}[t]
\centering
\includegraphics[]{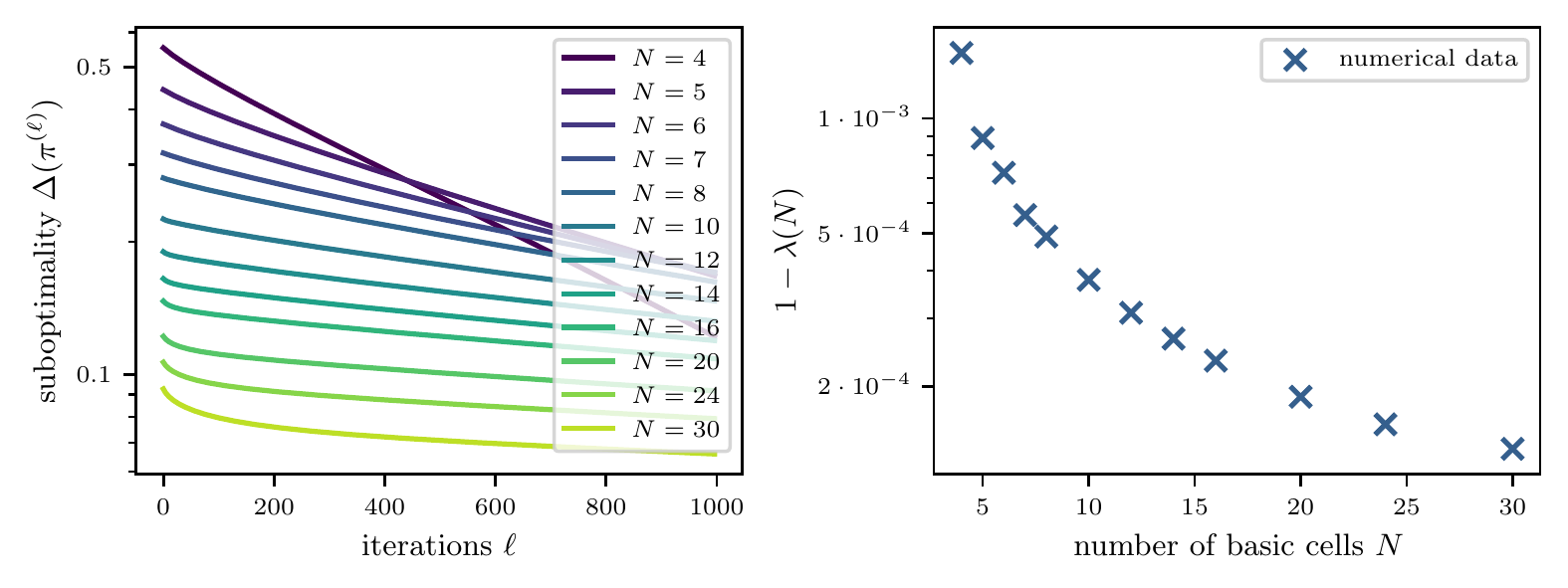}
\caption{Results for $N$-dependency in chain graph worst-case example, see Example \ref{ex:WorstCaseChain}. Left: Sub-optimality $\Delta(\iterl{\pi})$ over iterations for various values of $N$. The values converge to zero linearly. %
	Right: Linear contraction factors $\lambda(N)$ extracted from the left plot. Since they approach 1 very quickly, they are visualized best in the form $1-\lambda(N)$ in log-scale.}
\label{fig:WorstCaseN}
\end{figure}

\begin{example}[Chain graph, dependency on graph size]
\label{ex:WorstCaseChain}
The dependency of the contraction ratio bound of Theorem \ref{thm:NCellConvergence} on the graph structure is more complex. The graph structure determines the numbers $M$, $N$ and implicitly $\muMin$ (since mass must be distributed over increasingly many cells). Due to the variety of worst-case estimates that entered into the proof of \eqref{eq:NCellConvergenceKLDecrement} we do not expect it to be particularly tight. Nevertheless, it is easy to confirm numerically that convergence slows down when the graph size increases.

We extend Example \ref{ex:WorstCaseThree}. For $N \geq 3$ let $X\assign Y\assign I\assign\{1,\ldots,N\}$, $X_i\assign\{i\}$ for $i \in I$. Let $\mu\assign \nu \assign \tfrac{1}{N} \sum_{i \in I} \delta_i$. Set
$$c(i,j) \assign \begin{cases}
	0 & \tn{if } i=j, \\
	1 & \tn{if } (i,j) \in \{(1,N),(N,1)\}, \\
	10 & \tn{else.}
	\end{cases}
$$
The unique optimal coupling is given by $\pi^\ast=\tfrac{1}{N} \sum_{i=1}^N \delta_{(i,i)}$, we choose $\iterz{\pi} = \tfrac{1}{N} \big(\sum_{i=2}^{N-1} \delta_{(i,i)} + \delta_{(1,N)} + \delta_{(N,1)}\big)$. $c$ and $\iterz{\pi}$ are illustrated in Figure \ref{fig:WorstCaseSetup}.
For $N$ even, we set
$$\partA\assign\{\{1,2\},\{3,4\},\ldots,\{N-1,N\}\},\quad
\partB\assign\{\{1\},\{2,3\},\ldots,\{N-2,N-1\},\{N\}\},$$
for $N$ odd we adapt this in the obvious fashion. Finally, let $\veps=1.4$ (but the qualitative results do not depend on $\veps$).
Similar to Example \ref{ex:WorstCaseThree}, mass from the first and last cell must be exchanged, but this time via an increasing number of intermediate cells.
Numerical results for this setting are shown in Figure \ref{fig:WorstCaseN}. We find that $\lambda(N)$ approaches 1 as $N$ increases.
\end{example}

\section{Relation to parallel sorting}
\label{sec:ConvergenceSorting}

The linear rates obtained in Theorems \ref{thm:ThreeConvergence} and \ref{thm:NCellConvergence} become very slow for small $\veps$. The number of required iterations to achieve prescribed sub-optimality scales like $\exp(C/\veps)$ as $\veps$ approaches zero.
This rate is comparable to the result by Franklin and Lorenz \cite{FranklinLorenz-Scaling-1989} on the convergence of the discrete Sinkhorn algorithm.
They show that the intermediate marginal $\iterl{\mu} = P_X \iterl{\pi}$ in the Sinkhorn algorithm satisfies $d(\iterl{\mu},\mu) \leq q^\ell \cdot d(\iterz{\mu},\mu)$ where $d$ denotes Hilbert's projective metric and $q \approx 1-4\exp(-\|c\|/\veps)$ for $\veps \ll \|c\|$.

While we do not claim that the bounds in Theorems \ref{thm:ThreeConvergence} and \ref{thm:NCellConvergence} are tight, they at least qualitatively capture the right scaling behaviour with respect to $\veps$, the mass contained in the basic cells, and the (approximate) graph diameter $M$, as demonstrated with some worst-case examples in Section \ref{sec:NumericsWorstCase}.

We emphasize however that in Section \ref{sec:Convergence} we make virtually no assumptions on the cost function (beyond bounded) and on the structure of the basic and composite partitions (beyond basic cells carrying non-zero mass and a connected partition graph).
This freedom is essential for the construction of the examples in Section \ref{sec:NumericsWorstCase}.
Conversely, in practice we are usually not dealing with such restive problems, but often need to solve transport problems on low-dimensional domains, with highly structured cost functions, and it is possible to choose highly structured basic and composite partitions.
Based on the intuition from Figure \ref{fig:IntroIllustration} we expect that the algorithm will perform much better in such settings, which is confirmed by numerical experiments in Section \ref{sec:NumericsLargeScale}.

Unfortunately, a convergence analysis that leverages the additional geometric structure to obtain better rates is much more challenging than the worst-case bounds from Section \ref{sec:Convergence} and thus beyond the scope of this article.

To provide at least some insight we discuss in this Section an example of discrete, one-dimensional, unregularized optimal transport where the domain decomposition algorithm becomes equivalent to the parallel odd-even transposition sorting algorithm.
Let
\begin{align*}
	X & =\{x_1,\ldots,x_{N}\}, &
	Y & =\{y_1,\ldots,y_{N}\},
\end{align*}
for some $N \in \N$.
Let $\mu$ and $\nu$ be the normalized counting measures on $X$ and $Y$, which can be written as
\begin{align*}
	\mu &= \frac{1}{N} \sum_{i=1}^{N} \delta_{x_i}, &
	\nu &= \frac{1}{N} \sum_{i=1}^{N} \delta_{y_i}.
\end{align*}
and let $c : X \times Y \to \R_+$ satisfy the strict Monge property \cite{ReviewMongeMatrix-96}, i.e.~
\begin{equation}
\label{eq:MongeCost}
c(x_i,y_j)+c(x_r,y_s) < c(x_i,y_s) + c(x_r,y_s) \qquad \tn{for} \qquad 1 \leq i < r \leq N,\,1 \leq j < s \leq N.
\end{equation}
A simple example for such a setting would be when $X$ and $Y$ are (sorted) sets of points in $\R$, $x_i < x_j$, $y_i < y_j$ for $1 \leq i < j \leq N$, and $c(x_i,y_j)=h(x_i-y_j)$ for a strictly convex function $h$. This covers the Wasserstein distances for $p>1$.

For this setting we are now interested in the unregularized optimal transport problem
\begin{equation}
	\min \left\{ \int_{X \times Y} c\,\diff \pi \middle| \pi \in \Pi(\mu,\nu) \right\}.
\end{equation}
It is easy to see that this is equivalent to solving the linear assignment problem between the sets $X$ and $Y$ with respect to the cost $c$ and that the unique optimal solution is given by $\pi=\tfrac{1}{N} \sum_{i=1}^N \delta_{(x_i,y_i)}$ (uniqueness is implied by strict inequality in \eqref{eq:MongeCost}).

Let now a basic partition of $X$ be given by the partition into singletons, i.e.~$X_i=\{x_i\}$ for $i \in I = \{1,\ldots,N\}$ and the two composite partitions $\partA$ and $\partB$ will be set to
\begin{align}
	\partA & = (\{1,2\},\{3,4\},\ldots,\{N-1,N\}), &
	\partB & = (\{1\},\{2,3\},\ldots,\{N-2,N-1\},\{N\})
\end{align}
where for simplicity of exposition we assume that $N$ is even.
Further, let $\iterz{\pi}=\tfrac{1}{N} \sum_{i=1}^N \delta_{(x_i,y_{\sigma(i)})}$ where $\sigma : \{1,\ldots,N\} \to \{1,\ldots,N\}$ is some permutation.
We can now conceive running an unregularized version of Algorithm \ref{alg:DomDec} where we replace the regularized transport solver in line \ref{line:CompositePi} by the unregularized counterpart.
It can quickly be seen that
(due to the strict Monge property) the solution of the (now unregularized) sub-problems in line \ref{line:CompositePi} is always unique
and at each iteration $\iterl{\pi}$ corresponds to a permutation $\iterl{\sigma}$.
Each composite cell (with the exception of the cells $\{1\}$ and $\{N\}$ in $\partB$) consists of two consecutive elements of $X$ and in line \ref{line:CompositePi} it is tested whether the two corresponding assigned points in $Y$ should be kept in the current order or flipped.
Therefore, in this special case the domain decomposition algorithm reduces to the odd-even transposition parallel sorting algorithm \cite[Exercise 37]{KnuthArtProgramming1998-3} which is known to converge in $O(N)$ iterations. It is not hard to see that the analysis for odd-even transposition sort could be adapted to the continuous one-dimensional strictly-Monge case to establish convergence in $O(1/\muMin)$ steps.

Unfortunately, the techniques do not generalize to multiple dimensions and an accurate analysis of the convergence speed of the domain decomposition algorithm in that setting is therefore still an open problem.
Nevertheless, we are optimistic that the expected performance for `structured problems' in higher dimensions will be much better than the worst-case bounds from Section \ref{sec:Convergence}, which is confirmed numerically in Section \ref{sec:NumericsLargeScale}.

\section{Practical large-scale implementation}
\label{sec:Implementation}

\newcommand{\cell}{\tn{cell}}
\newcommand{\myitem}{\textbullet\,}
\newcommand{\ForAllLine}[1]{\algorithmicforall\ #1 \algorithmicdo:}
\begin{algorithmfloat}[hbt]
	\noindent
	\textbf{Input}:\\
	\myitem initial feasible basic $Y$-marginals $(\nu_i^{0})_{i \in I}$ (i.e.~satisfying $\|\nu_i^0\|=\|\mu_i\|$ for $i \in I$ and $\sum_{i \in I} \nu^0_i=\nu$)\\
	\myitem scaling factor $u^0 \in L^\infty_+(X,\mu)$\\
	\noindent
	\textbf{Output}:\\
	\myitem final feasible basic $Y$-marginals $(\nu_i)_{i \in I}$\\
	\myitem final partial scaling factors $(u_J)_{J \in \partA}$, $(u_J)_{J \in \partB}$ on both composite partitions
	\smallskip
	\begin{algorithmic}[1]
		\State \ForAllLine{$i \in I$} $\nu_i \leftarrow \nu_i^0$
		\State \ForAllLine{$J \in \partA$} $u_{A,J} \leftarrow u \restr X_J$
		\State \ForAllLine{$J \in \partB$} $u_{B,J} \leftarrow u \restr X_J$
		\State $\ell \leftarrow 0$
		\Repeat
		\State $\ell \leftarrow \ell+1$
		\State \algorithmicif\ ($\ell$ is odd)\ \algorithmicthen\ $C \leftarrow A$\ \algorithmicelse\ $C \leftarrow B$
		\Comment{select the partition}
		\ForAll{$J \in \partC$} \label{line:CompositeFor}
		\Comment{solve over each composite cell}
		\State $\nu_{\cell} \leftarrow \sum_{i \in J} \nu_i$
			\Comment{compute $Y$-marginal on cell}
		\State $\pi_\cell,u_{C,J} \leftarrow \Call{Sinkhorn}{\mu_J,\nu_\cell,u_{C,J}}$ \Comment{approx.~sol.~w.~Sinkhorn algorithm}
		\label{line:Sinkhorn}
		\State \ForAllLine{$i \in J$} $\nu_i \leftarrow \proj_Y (\pi_\cell \restr (X_i \times Y))$ \Comment{extract new basic partial marginals} \label{line:BasicPartialMarginals}
		\State $\Call{BalanceMeasures}{(\nu_i)_{i \in J})}$
		\State \ForAllLine{$i \in J$} $\Call{Truncate}{\nu_i}$
		\EndFor \label{line:EndCompositeFor}
		\Until stopping criterion
		\State \Return $(\nu_i)_{i \in I}$, $(u_{A,J})_{J \in \partA}$, $(u_{B,J})_{J \in \partB}$
	\end{algorithmic}	

	\noindent \textbf{Comment:} The functions \textsc{Sinkhorn} and \textsc{BalanceMeasures} are described in Section \ref{sec:ComputBalanceMeasures}. The function \textsc{Truncate} refers to truncation up to a small threshold to obtain sparse vectors, as described in Section \ref{sec:ComputOverview}, item \eqref{item:Truncation}. \\
	The variable $\pi_\cell$ is a local variable that is not stored beyond the for-loop over $\partC$.
\caption{Practical implementation of entropic domain decomposition}
\label{alg:DomDecComput}
\end{algorithmfloat}

\subsection{Computational adaptations}
\label{sec:ComputOverview}
Our strongest motivation for studying the entropic domain decomposition algorithm is to develop an efficient numerical method.
The statement given in Algorithm \ref{alg:DomDec} is mathematically abstract and served as preparation for theoretical analysis.
In this Section we describe a more concrete computational implementation, formalized in Algorithm \ref{alg:DomDecComput}. We summarize the main modifications:
\begin{enumerate}[(i)]
	\item Instead of storing the full coupling $\iterl{\pi}$, we only store the $Y$-marginal $\iterl{\nu}_i \assign \proj_Y \iterl{\pi}_i$ of each basic cell $i \in I$.
	This suffices to compute the $Y$-marginals of $\iterl{\pi}$ on each composite cell, as done in Algorithm \ref{alg:DomDec}, line \ref*{line:YMarginal}.
	The partial couplings $\iterl{\pi}_i$ can quickly be recovered with the scaling factors $u$, see \eqref{item:ComputAlgScaling} below.
	\item To further reduce the memory footprint, we truncate $\iterl{\nu}_i$ after each iteration, i.e.~we only keep a sparse representation of the entries above a small threshold, which we set to $10^{-15}$.
	In theory this invalidates the global density bound \eqref{eq:RadNikLowerBound} in Lemma \ref{lem:DensityMarginalBound} and thus jeopardizes convergence of the method.
	In practice our threshold is low enough such that $\iterl{\nu}_i$ on neighbouring basic cells have strong overlap and thus, based on the intuition from Section \ref{sec:ConvergenceSorting}, on geometric problems we still expect convergence to an approximate solution. This is can be confirmed numerically via the primal-dual gap (Section \ref{sec:NumericsQuantitative} and Table \ref{tab:Summary}).
	\label{item:Truncation}
	\item We replace `abstract exact solution' of the entropic transport problems on the composite cells by the Sinkhorn algorithm (compare Algorithm \ref{alg:DomDec}, line \ref*{line:CompositePi} and Algorithm \ref{alg:DomDecComput}, line \ref*{line:Sinkhorn}).
	This means, we need to be able to handle approximate solutions for the cell problems. We describe how this is done in Section \ref{sec:ComputBalanceMeasures}.
	\item We explicitly keep track of the partial scaling factors $u$ on the composite cells of $\partA$ and $\partB$.
	This allows accurate initialization of the Sinkhorn algorithm on each composite cell, in particular after the first few iterations.
	Further, it enables us to quickly reconstruct the partial couplings on composite and basic cells.
	Finally, we can use them to estimate the sub-optimality via the primal-dual gap, see Section \ref{sec:ComputEvaluatePDGap}.
	\label{item:ComputAlgScaling}
	\item We combine Algorithm \ref{alg:DomDecComput} with a multi-scale scheme and the $\veps$-scaling heuristic described in \cite{SchmitzerScaling2019}. This is critical for computational efficiency, as it allows to obtain a good (and sparse) initial coupling $\pi^0$ (or its partial $Y$-marginals on the basic cells) and drastically reduces the number of required iterations.
	Some more details are given in Section \ref{sec:ComputMultiScale}.
\end{enumerate}

\newcommand{\err}{\tn{Err}}
\subsection{Sinkhorn solver and handling approximate cell solutions}
\label{sec:ComputBalanceMeasures}
The Sinkhorn solver at line \ref*{line:Sinkhorn} in Algorithm \ref{alg:DomDecComput} only returns an approximate solution. This requires some additional post-processing.
Note that for efficiency, we initialize the solver with the scaling factor $u_{C,J}$ from the previous iteration, i.e.~we start the solver with a $Y$-iteration (Remark \ref{rem:Sinkhorn}).

As stopping criterion of the Sinkhorn solver we use the $L^1$-marginal error of the $X$-marginal after the $Y$-iteration.
We require that this error is smaller than $\|\mu_J\| \cdot \err$ where $\err$ is some global error threshold.
In this way, the summed $L^1$-error of all $X$-marginals over all composite cells is bounded by $\err$ after each iteration.

We terminate the algorithm after a $Y$-iteration, i.e.~the $Y$-marginals are satisfied exactly, $\proj_Y \pi_\cell=\nu_\cell$, and after the for-loop in line \ref*{line:EndCompositeFor} one has $\sum_{i \in I} \nu_i = \nu$, which is crucial for the validity of the domain decomposition scheme.
Since $\mu_J = \mu \restr X_J$ is constant throughout the algorithm it is easier to handle a deviation between $\proj_X \pi_\cell$ and $\mu_J$.

However, when $\pi_\cell$ is not contained in $\Pi(\mu_J,\nu_\cell)$, after computing the basic cell marginals in line \ref*{line:BasicPartialMarginals} it may occur that $\|\nu_i\| \neq \|\mu_i\|$. This means, that while $\sum_{i \in J} \nu_i = \nu_\cell$, the masses of the $\nu_i$ may not exactly match the masses in their basic cells. This would cause problems in subsequent iterations. Therefore, the function $\textsc{BalanceMeasures}$ determines for each basic cell $i \in J$ the deviation $\|\nu_i\| - \|\mu_i\|$ and then moves mass between the different $(\nu_i)_{i \in J}$ to compensate the deviations, while preserving their non-negativity and their sum. This can be done efficiently directly in the sparse data structure and without adding new support points.

\subsection{Estimating the primal-dual gap}
\label{sec:ComputEvaluatePDGap}
Explicitly evaluating the primal-dual gap between \eqref{eq:EntropicOTPrimal} and \eqref{eq:EntropicOTDual} (see Proposition \ref{prop:Duality} \eqref{item:Duality:Gap}) is a useful tool to verify that a high-quality approximate solution was found.
As primal candidate we can use the current primal iterate. We can sum up the contribution of each $\pi_\cell$ to \eqref{eq:EntropicOTPrimal} in the for-loop starting at line \ref*{line:CompositeFor}.
The partial replacement with older iterates, as in Lemma \ref{lem:PrimalCandidate}, was merely required for the proof strategy, for numerical evaluation it is not necessary.

As a dual candidate we could use $\hat{u}$ and $\hat{v}$ as constructed in (\ref{eq:NCellDualCandidate},\ref{eq:NCellProofUHat}). It is not hard to see that upon perfect convergence of the domain decomposition algorithm, the functions $\iterl{u}_{J_A}$ and $\iterlm{u}_{J_B}$ on two overlapping composite cells $J_A \in \partA$, $J_B \in \partB$ would only differ by a constant factor on their overlap $J_A \cap J_B \neq \emptyset$.
Consequently, this `glued' together $(\hat{u},\hat{v})$ would be a dual maximizer (cf.~proof strategy of Proposition \ref{prop:SimpleConvergence}). In practice, before perfect convergence, we find however that this construction yields unnecessarily bad dual estimates. We now sketch a slightly more sophisticated construction (along with an explanation why the above construction does not work well numerically).

Inspired by the partition graph, Definition \ref{def:PartitionGraph}, and the construction (\ref{eq:NCellDualCandidate},\ref{eq:NCellProofUHat}) we now define another weighted graph.
The vertex set is given by the composite cells $\partA$. Between two vertices $J_1, J_2 \in \partA$ there will be an edge if there exists some $J_B \in \partB$ such that $J_1 \cap J_B \neq \emptyset$ and $J_2 \cap J_B \neq \emptyset$.
The corresponding edge-weight will be set to
\begin{align*}
	q_{(J_1,J_2)} \assign \fint_{J_1 \cap J_B} \frac{u_{A,J_1}}{u_{B,J_B}}\,\diff \mu
		\cdot \fint_{J_2 \cap J_B} \frac{u_{B,J_B}}{u_{A,J_2}}\,\diff \mu.
\end{align*}
Note that the weight depends on the orientation of the edge and that there may be multiple edges between the same two vertices.
Similar to (\ref{eq:NCellDualCandidate},\ref{eq:NCellProofUHat}) we could now fix a root cell $J_0 \in \partA$ and for any $J \in \partA$ we fix some path $(J_0,J_1,\ldots,J_n=J)$ and set
\begin{align*}
	\hat{u}_{J} \assign \prod_{k=0}^{n-1} q_{(J_k,J_{k+1})} \cdot u_{A,J}.
\end{align*}
Again, after perfect convergence, the $\hat{u}$ obtained by gluing the $\hat{u}_J$ would be a dual maximal $X$-scaling.
The value of $\hat{u}_J$ would not depend on the choice of the path from $J_0$ to $J$ and when going around a cycle in the graph, the product of the corresponding weight factors will be equal to $1$.
This means that $\log q_{(J_1,J_2)}$ is a discrete gradient on the graph, i.e.~summing $\log q_{(J_1,J_2)}$ around a cycle yields zero.

But before perfect convergence, the weights are inconsistent in the sense that $\log q_{(J_1,J_2)}$ is not a gradient and by picking some arbitrary path from $J_0$ to $J$ we distribute this inconsistency in an arbitrary way over $\hat{u}_J$. In practice this leads to poor candidates $\hat{u}$, in particular far from the root cell.

We weaken this effect by doing a discrete Helmholtz decomposition of $\log q_{(J_1,J_2)}$. More concretely, we look for a minimizer of
\begin{align*}
	\min \left\{ \sum_{\substack{J_1,J_2 \in \partA:\\(J_1,J_2) \tn{ is graph edge}}} \left( (V(J_1)-V(J_2)) - \log q_{(J_1,J_2)}  \right)^2
	\middle|
	V : \partA \to \R
	\right\}.
\end{align*}
The gradient of the minimizing potential $V$ will be an optimal approximation of the edge weights in a mean-square sense.
This problem can be solved quickly via a linear system, also when there are multiple edges between two vertices.
We then set $\hat{u}_J \assign \exp(V_J) \cdot u_{A,J}$. We observe that this distributes the inconsistencies in the weights more evenly over the graph and yields better dual estimates.

Since we truncate the partial $Y$-marginals (see Section \ref{sec:ComputOverview} \eqref{item:Truncation}), unlike in the theoretical analysis the factors $v_J$ are not defined on all of $Y$. But once the factors $V$ are available, the $v_J$ can be `glued' together similar to $\hat{u}$.

In practice this method yields small primal-dual gaps (cf.~Section \ref{sec:NumericsQuantitative}), confirming that the entropic domain decomposition works well.
\subsection[Multi-scale scheme and epsilon-scaling]{Multi-scale scheme and $\veps$-scaling}
\label{sec:ComputMultiScale}
In \cite{SchmitzerScaling2019} it is described in detail how a combination of a multi-scale scheme and $\veps$-scaling increases the computational efficiency of the Sinkhorn algorithm and how to implement them numerically.
Similarly, the two techniques are crucial for computational efficiency of the domain decomposition scheme.
For instance, to initialize Algorithms \ref{alg:DomDec} and \ref{alg:DomDecComput} one requires a feasible coupling $\pi^0 \in \Pi(\mu,\nu)$ (or its partial $Y$-marginals on basic cells). Without further prior knowledge, the only canonical candidate would be the product measure $\mu \otimes \nu$. But this is almost certainly far from the optimal coupling, hence requiring many iterations, and it is dense, i.e.~requiring a lot of memory in the initial iterations. Instead, as usual, we start by solving the problem on a coarse scale and then refine the (approximate) solutions to subsequent finer scales for initialization.

\newcommand{\pa}{\tn{pa}}
\newcommand{\Pa}{\tn{Pa}}
We now provide a few more details on how this is done.
Assume $X$ and $Y$ are discrete, finite sets. Let $\hat{X}$ and $\hat{Y}$ be coarse approximations of $X$ and $Y$, let $\pa: X \sqcup Y \to \hat{X} \sqcup \hat{Y}$ be a function that assigns to every element of $X$ and $Y$ their \emph{parent} element in $\hat{X}$ and $\hat{Y}$. Coarse approximations of $\mu$ and $\nu$ are then given by $\hat{\mu} \assign \pa_\sharp \mu$ and $\hat{\nu} \assign \pa_\sharp \nu$ where $\pa_\sharp$ denotes the push-forward of measures.
Let $\{\hat{X}_i\}_{i \in \hat{I}}$ be a basic partition of $(\hat{X},\hat{\mu})$ and let $\{X_i\}_{i \in I}$ be a basic partition of $(X,\mu)$. We choose them in such a way that the fine basic partition can be interpreted as a refinement of the coarse basic partition, i.e.~for each $i \in I$ there is some unique $j \in \hat{I}$ such that $\pa(x) \in \hat{X}_{j}$ for all $x \in X_i$.
We denote the function $I \to \hat{I}$, taking $i$ to $j$ by $\Pa$.

Let now $(\hat{\nu}_j)_{j \in \hat{I}}$ be the tuple of partial marginals as returned by Algorithm \ref{alg:DomDecComput} on the coarse scale. We use as initialization on the subsequent finer scale the measures defined via
\begin{align}
	\label{eq:FineBasicMarginals}
	\nu_i^0(y) \assign \nu(y) \cdot \frac{\hat{\nu}_{\Pa(i)}(\pa(y))}{\hat{\nu}(\pa(y))} \cdot \frac{\mu(X_i)}{\hat{\mu}(\hat{X}_{\Pa(i)})}
\end{align}
where by a slight abuse of notation, we write $\nu(y)$ to mean $\nu(\{y\})$ for singletons.
\begin{proposition}
	\label{prop:FineBasicMarginals}
	\eqref{eq:FineBasicMarginals} provides a valid initialization for Algorithm \ref{alg:DomDecComput}.
\end{proposition}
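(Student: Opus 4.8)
The plan is to check directly the two requirements that Algorithm~\ref{alg:DomDecComput} imposes on its input $(\nu_i^0)_{i\in I}$: that each $\nu_i^0$ is a non-negative measure on $Y$ with $\|\nu_i^0\|=\|\mu_i\|$, and that $\sum_{i\in I}\nu_i^0=\nu$. (Any $u^0\in L^\infty_+(X,\mu)$ is an admissible scaling-factor input --- e.g.\ the constant function $1$ or the refinement of the coarse-scale scalings --- so nothing is to be verified there.) Throughout I would use the defining relations $\hat\mu=\pa_\sharp\mu$, $\hat\nu=\pa_\sharp\nu$, the fact that a basic partition cell carries positive mass, and the feasibility invariant satisfied by the coarse-scale output of Algorithm~\ref{alg:DomDecComput}, namely $\|\hat\nu_j\|=\|\hat\mu_j\|=\hat\mu(\hat X_j)$ for all $j\in\hat I$ and $\sum_{j\in\hat I}\hat\nu_j=\hat\nu$ (and, in particular, $0\le\hat\nu_j\le\hat\nu$).

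Non-negativity is immediate from \eqref{eq:FineBasicMarginals}: for $y$ with $\nu(y)>0$ one has $\hat\nu(\pa(y))\ge\nu(y)>0$, while $\hat\nu_{\Pa(i)}(\pa(y))\ge0$, $\mu(X_i)>0$ and $\hat\mu(\hat X_{\Pa(i)})>0$, hence $\nu_i^0(y)\ge0$; whenever $\hat\nu(\pa(y))=0$ one necessarily has $\nu(y)=0$ and reads $\nu_i^0(y)=0$. For the mass I would fix $i$, sum \eqref{eq:FineBasicMarginals} over $y\in Y$ and regroup according to parents, using that $\hat\nu_{\Pa(i)}(\pa(y))$ and $\hat\nu(\pa(y))$ depend on $y$ only through $\pa(y)$:
\[
\|\nu_i^0\|
= \frac{\mu(X_i)}{\hat\mu(\hat X_{\Pa(i)})}\sum_{\hat y\in\hat Y}\frac{\hat\nu_{\Pa(i)}(\hat y)}{\hat\nu(\hat y)}\sum_{y:\,\pa(y)=\hat y}\nu(y)
= \frac{\mu(X_i)}{\hat\mu(\hat X_{\Pa(i)})}\sum_{\hat y\in\hat Y}\hat\nu_{\Pa(i)}(\hat y)
= \mu(X_i),
\]
where the second equality uses $\sum_{y:\,\pa(y)=\hat y}\nu(y)=\hat\nu(\hat y)$ (i.e.\ $\hat\nu=\pa_\sharp\nu$) and the third uses $\|\hat\nu_{\Pa(i)}\|=\hat\mu(\hat X_{\Pa(i)})$; thus $\|\nu_i^0\|=\|\mu_i\|$.

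For $\sum_{i\in I}\nu_i^0=\nu$ I would instead fix $y$ and regroup the sum over $i\in I$ by the value $j=\Pa(i)$. The combinatorial input here is that the fine basic partition refines the coarse one: each $X_i$ lies in $\pa^{-1}(\hat X_{\Pa(i)})$, and since $\{X_i\}_{i\in I}$ and $\{\pa^{-1}(\hat X_j)\}_{j\in\hat I}$ are both partitions of $X$, one gets $\bigcup_{i:\,\Pa(i)=j}X_i=\pa^{-1}(\hat X_j)$ and hence $\sum_{i:\,\Pa(i)=j}\mu(X_i)=\mu(\pa^{-1}(\hat X_j))=\hat\mu(\hat X_j)$. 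Therefore
\[
\sum_{i\in I}\nu_i^0(y)
= \sum_{j\in\hat I}\frac{\nu(y)\,\hat\nu_j(\pa(y))}{\hat\nu(\pa(y))}\cdot\frac{1}{\hat\mu(\hat X_j)}\sum_{i:\,\Pa(i)=j}\mu(X_i)
= \frac{\nu(y)}{\hat\nu(\pa(y))}\sum_{j\in\hat I}\hat\nu_j(\pa(y))
= \nu(y),
\]
the last equality by $\sum_{j\in\hat I}\hat\nu_j=\hat\nu$. This completes the verification. No genuine obstacle arises: the only points needing care are the two regroupings (by $\pa$ on the $Y$-side, by $\Pa$ on the $X$-side) together with the refinement identity $\bigcup_{i:\,\Pa(i)=j}X_i=\pa^{-1}(\hat X_j)$, which is exactly where the compatibility of the coarse and fine basic partitions enters, plus the bookkeeping of the degenerate cases $\hat\nu(\hat y)=0$.
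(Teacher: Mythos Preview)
Your proof is correct and follows essentially the same approach as the paper: the two required identities are verified by the same regroupings (over $\hat y\in\hat Y$ via $\pa$ for the mass condition, over $j\in\hat I$ via $\Pa$ for the summation condition), invoking $\hat\nu=\pa_\sharp\nu$, the refinement identity $\bigcup_{i:\Pa(i)=j}X_i=\pa^{-1}(\hat X_j)$, and the coarse-scale feasibility $\|\hat\nu_j\|=\hat\mu(\hat X_j)$, $\sum_j\hat\nu_j=\hat\nu$. Your additional explicit treatment of non-negativity and of the degenerate case $\hat\nu(\pa(y))=0$ is a welcome bit of care that the paper leaves implicit.
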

\begin{proof}
	We need to show that $\sum_{i \in I} \nu^0_i=\nu$ and $\|\nu^0_i\|=\|\mu_i\|$ for all $i \in I$.
	For the former, for $y \in Y$ we find
	\begin{align*}
		\sum_{i \in I} \nu_i^0(y) & = \sum_{j \in \hat{I}}
			\sum_{\substack{i \in I:\\\Pa(i)=j}} \nu_i^0(y)
		= \nu(y) \sum_{j \in \hat{I}} \frac{\hat{\nu}_j(\pa(y))}{\hat{\nu}(\pa(y))}
			\sum_{\substack{i \in I:\\\Pa(i)=j}} \frac{\mu(X_i)}{\hat{\mu}(\hat{X}_j)}=\nu(y)
	\end{align*}
	where in the third expression we first see that the third factor equals 1, since $\pa^{-1}(\hat{X}_j)=\cup_{i \in I:\Pa(i)=j} X_i$, and subsequently the second factor equals 1, since $\sum_{j \in \hat{I}} \hat{\nu}_j=\hat{\nu}$, which is implied by the domain decomposition algorithm on the coarse scale.
	
	Similarly, for the latter, for $i \in I$,
	\begin{align*}
		\|\nu_i^0\| & =\sum_{\hat{y} \in \hat{Y}} \sum_{\substack{y \in Y:\\\pa(y)=\hat{y}}} \nu_i^0(y)
		=  \frac{\mu(X_i)}{\hat{\mu}(\hat{X}_{\Pa(i)})}
			\sum_{\hat{y} \in \hat{Y}} \frac{\hat{\nu}_{\Pa(i)}(\hat{y})}{\hat{\nu}(\hat{y})}
			\underbrace{\sum_{\substack{y \in Y:\\\pa(y)=\hat{y}}} \nu(y)}_{=\hat{\nu}(\hat{y})} \\
		& = \frac{\mu(X_i)}{\hat{\mu}(\hat{X}_{\Pa(i)})}
			\sum_{\hat{y} \in \hat{Y}} \hat{\nu}_{\Pa(i)}(\hat{y})
			= \frac{\mu(X_i)}{\hat{\mu}(\hat{X}_{\Pa(i)})}
			\hat{\nu}_{\Pa(i)}(\hat{Y})=\mu(X_i)
	\end{align*}
	where we have used the consistency condition $\|\hat{\nu}_j\|=\|\hat{\mu}_j\|=\hat{\mu}(\hat{X}_j)$ for $j \in \hat{I}$ in the coarse domain decomposition algorithm.
\end{proof}

	Of course this scheme can be repeated over multiple successive levels of approximations.
	
	In addition, we refine the scaling factors $u$ from the coarse to the fine level to obtain good initializations. For this, we first `glue' together the final $u$ at the coarse level, as described in Section \ref{sec:ComputEvaluatePDGap}.
	Then we do a linear interpolation onto the fine points (which we perform in $\log$-space).
	
	Finally, we remark that when changing the parameter $\veps$ during $\veps$-scaling from $\veps_{\tn{old}}$ to $\veps_{\tn{new}}$, the scalings $(u,v)$ should not be left constant, but instead $(\veps \cdot \log(u), \veps \cdot \log(v))$ should be kept constant, so we set $u_{\tn{new}} \assign \exp(\tfrac{\veps_{\tn{old}}}{\veps_{\tn{new}}} \cdot \log u_{\tn{old}})$, and similarly for $v_\tn{new}$, cf.~\cite[Section 3.2]{SchmitzerScaling2019}.
\section{Numerical geometric large scale examples}
\label{sec:NumericsLargeScale}
We now demonstrate the efficiency of Algorithm \ref{alg:DomDecComput} on large-scale geometric problems and show that it compares favorably to the single Sinkhorn algorithm.
\subsection{Preliminaries}
\label{sec:NumericsPreliminaries}
\paragraph{Hardware and implementation.}
All numerical experiments were performed on a standard desktop computer with an Intel Core i7-8700 CPU with 6 physical cores and 32 GB of RAM, running a standard Ubuntu-based distribution. We developed an experimental implementation for Python 3.7.
Parallelization was implemented with MPI, via the module mpi4py.\footnote{\url{https://mpi4py.readthedocs.io/}}
We used a simple master/worker setup with one thread (master) keeping track of the overall problem and submitting the sub-tasks to the other threads (workers).
Parallelization was used for solving of the composite cell problems, the \textsc{BalanceMeasures} function and data refinement from coarse to fine layers.

\paragraph{Stopping criterion and measure truncation.}
We set the stopping criterion for the internal Sinkhorn solver to $\err=10^{-4}$ (see Section \ref{sec:ComputBalanceMeasures}). This bounds the $L^1$-marginal error of the primal iterates.
Partial marginals $(\nu_i)_{i \in I}$ are truncated at $10^{-15}$ and stored as sparse vectors.

\paragraph{Test data.}
We focus on solving the Wasserstein-2 optimal transport problem, i.e.~we set $c(x,y)=\|x-y\|^2$ but the scheme can be applied to arbitrary costs and we expect efficient performance for any cost of the form $c(x,y)=h(x-y)$ for strictly convex $h$, such that (in the continuous, unregularized setting) the unique optimal plan is concentrated on the graph of a Monge map, see \cite{McCannGangboOTGeometry1996}.

\begin{figure}[t]
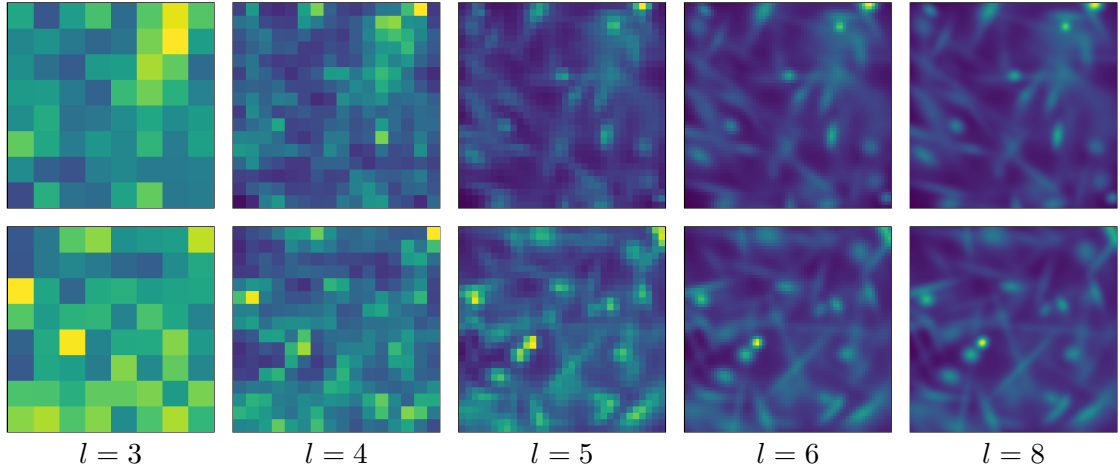

	\centering
	{\def\imgw{2.7cm}
	\begin{tikzpicture}[x=\imgw,y=\imgw,img/.style={inner sep=0pt,draw=black,line width=1pt,anchor=north west}]
	\foreach \x/\y/\l in {0/0/3,1/0/4,2/0/5,3/0/6,4/0/8} {
		\node[img] at (1.1*\x,-1.1*\y){\includegraphics[width=\imgw]{fig/atomic_cells/muX_l\l.png}};
	}
	\begin{scope}[shift={(0,-1.1)}]
	\foreach \x/\y/\l in {0/0/3,1/0/4,2/0/5,3/0/6,4/0/8} {
		\node[img] at (1.1*\x,-1.1*\y)[label=below:{$l=\l$}]{\includegraphics[width=\imgw]{fig/atomic_cells/muY_l\l.png}};
	}
	\end{scope}
	\end{tikzpicture}
	}
	\caption{Two example images for size $256 \times 256$, at different layers, $l=8$ being the original. Yellow represents high, blue low mass density. The combination of strong local mass concentrations, smooth regions and areas with very little mass leads to challenging test problems with non-trivial transport maps.}
	\label{fig:ExampleImages}
\end{figure}
As test data we use 2D images with dimensions $2^n \times 2^n$ for $n=6$ to $n=11$, i.e.~from $64 \times 64$ up to $2048 \times 2048$, the number of pixels per image ranging from 4.1E3 to 4.2E6.
Similar to \cite{SchmitzerShortCuts2015,SchmitzerScaling2019} the images were randomly generated, consisting of mixtures of Gaussians with random variances and magnitudes.
This represents challenging problem data, leading to non-trivial optimal couplings, involving strong compression, expansion and anisotropies, as visualized in Figures \ref{fig:ExampleImages} and \ref{fig:BasicCellsLayers}.
For each problem size we generated 10 test images, i.e.~45 pairwise non-trivial transport problems, to get some estimate of the average performance.

\paragraph{Multi-scale and $\veps$-scaling.}
Each image is then represented by a hierarchy of increasingly finer images of size $2^l \times 2^l$ where $l$ ranges from $3$ to $n$.
We refer to $l$ as the \emph{layer}. This induces a simple quad-tree structure over the images where each pixel at layer $l$ is the parent of four pixels at layer $l+1$, see Section \ref{sec:ComputMultiScale} and \cite{SchmitzerShortCuts2015,SchmitzerScaling2019} for more details.

We embed the pixels into $\R^2$ as a Cartesian grid of edge length $\Delta x_n \assign 1$ between two neighbouring pixels at layer $l=n$ and with edge length $\Delta x_l \assign 2^{n-l}$ on coarser layers.
At each layer $l$ we start solving with $\veps=2 \cdot \Delta x_l^2$, for which we apply four iterations (two on $\partA$ and two on $\partB$). Then we decrease $\veps$ by a factor of two and apply two more iterations (one on $\partA$ and one on $\partB$). We repeat this (so that $\veps=0.5 \cdot \Delta x_l^2$) and then refine the image.
This leaves the scale of $c(\cdot,\cdot)/\veps$ approximately invariant throughout the layers (cf.~\cite{SchmitzerScaling2019}).
At the finest layer, we perform two additional iterations at $\veps=0.25$, which implies that the influence of entropic regularization in our final solution is rather weak (cf.~Figure \ref{fig:BasicCellsEps}).

This means, that for two images of size $2^n \times 2^n$ we only perform $(n-2) \cdot 8 + 2$ iterations of the domain decomposition algorithm, even though there are $2^{2n-4}$ basic cells in the $X$-image (see below) and the graph diameter (Definition \ref{def:PartitionGraph}) is on the order $O(2^{n-2})$. We observe that this logarithmic number of iterations is fully sufficient (cf.~Table \ref{tab:Summary}).
This is only possible due to the geometric problem structure (Section \ref{sec:ConvergenceSorting}) and the multi-scale scheme with $\veps$ scaling.

\paragraph{Basic and composite partitions.}
As basic partition we divide each image (at a given layer $l$) into blocks of $s \times s$ pixels where the cell size $s$ is a suitable divisor of $2^l$.
The composite partition $\partA$ is then generated by summarizing the basic cells into $2 \times 2$ blocks, the partition $\partB$ is generated in the same way but with an offset of one basic cell in each direction, i.e.~with single basic cells in the corners and $2 \times 1$ blocks along the image boundaries.
This is visualized in Figure \ref{fig:PartitionGraphD}.
Each basic cell at layer $l$ is therefore split into four basic cells at layer $l+1$, thus satisfying the assumption from Section \ref{sec:ComputMultiScale}.

The cell size $s$ is an important parameter.
For small $s$ the composite cell problems are small (and thus easier), but one must solve a larger number of composite problems, implying more communication overhead in parallelization and requiring more domain decomposition iterations.
For large $s$ the number of composite problems is smaller, thus inducing less communication overhead and requiring less iterations, but solving the composite problems becomes more challenging.
We found that $s=4$ yielded the best results for our code.
Studying the influence of this parameter in more detail and reducing the computational overhead in the implementation are a relevant direction for future work.

\newcommand{\col}{\tn{col}}
\subsection{Visualization of primal iterates}
\label{sec:Visualization}
To get an impression of the algorithm's behaviour we visualize the evolution of the primal iterates $\iterl{\pi}$.
This is difficult since in our examples the iterates are measures on $\R^4$. But the partition structure provides some assistance.
We assign colors to the basic cells in an alternating pattern (e.g.~a checkerboard tiling). Denote by $\col_i$ the color assigned to basic cell $i \in I$ ($\col_i$ could just be a RGB vector).
We can then approximately visualize a coupling $\pi \in \Pi(\mu,\nu)$ with partial marginals $\nu_i \assign \proj_Y \pi_i$ via the color image on $Y$ generated by $\sum_{i \in I} \col_i \cdot \RadNik{\nu_i}{\nu}$. When $\pi$ is a deterministic coupling, e.g.~introduced by a Monge map $T : X \to Y$, then region $T(X_i)$ will be colored with $\col_i$, i.e.~the image would look like a Cartesian grid that was morphed by the map $T$.
When $\pi$ is non-deterministic, different $\nu_i$ can overlap and in these regions the resulting color will be a linear interpolation of the colors $\col_i$, weighted by the $\nu_i$.

\begin{figure}[t]
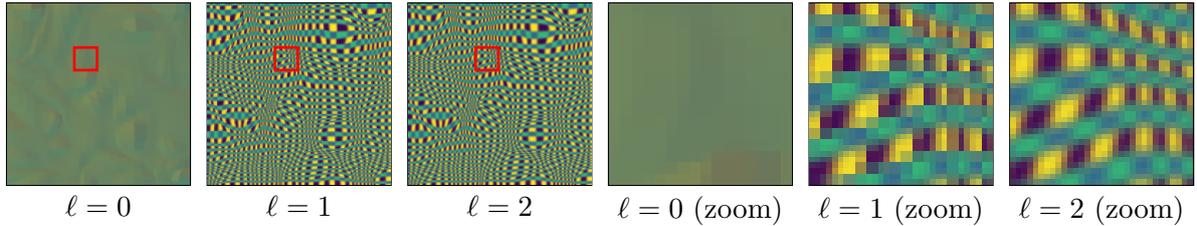

	\centering
	{\def\imgw{2.4cm}
	\begin{tikzpicture}[x=\imgw,y=\imgw,img/.style={inner sep=0pt,draw=black,line width=1pt,anchor=north west}]
	\foreach \x/\y/\l in {0/0/0,1/0/1,2/0/2} {
		\node[img] at (1.1*\x,-1.1*\y)[label=below:{$\ell=\l$}]{\includegraphics[width=\imgw]{fig/atomic_cells/iter_\l_col.png}};
		\begin{scope}[shift={(1.1*\x,-1.1*\y)},x={(1./256,0)},y={(0,-1./256)}]
			\draw[red,line width=1pt] (96,64) rectangle ++(32,32);
		\end{scope}
	}
	\foreach \x/\y/\l in {3/0/0,4/0/1,5/0/2} {
		\node[img] at (1.1*\x,-1.1*\y)[label=below:{$\ell=\l$ (zoom)}]{\includegraphics[width=\imgw]{fig/atomic_cells/iter_zoom_\l_col.png}};
	}
	\end{tikzpicture}
	}
	\caption{Primal iterates for the images from Figure \ref{fig:ExampleImages}, visualized as described in Section \ref{sec:Visualization}, on layer $l=8$, $\veps=2$, for $\ell=0,1,2$. The initial coupling is blurry due to the refinement construction via \eqref{eq:FineBasicMarginals}. For iterates after $\ell=2$ it is hard to spot further changes.}
	\label{fig:BasicCellsIter}
\end{figure}

\begin{figure}[t]
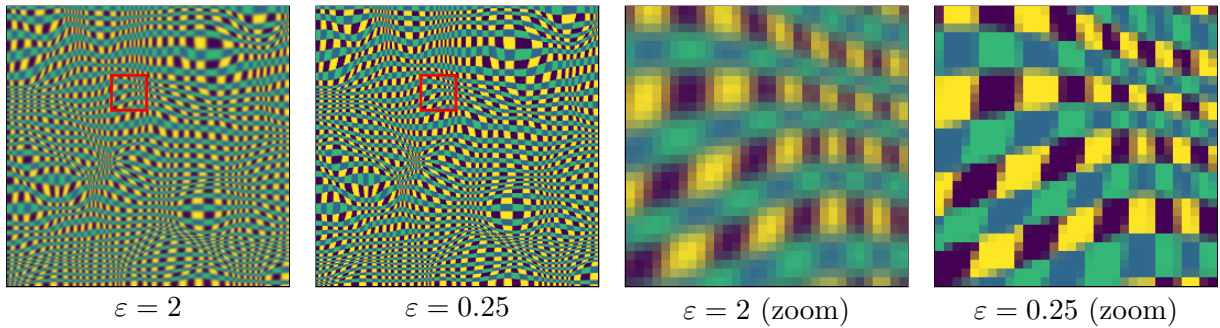

	\centering
	{\def\imgw{3.7cm}
	\begin{tikzpicture}[x=\imgw,y=\imgw,img/.style={inner sep=0pt,draw=black,line width=1pt,anchor=north west}]
	\foreach \x/\y/\l/\e in {0/0/0/2,1/0/3/0.25} {
		\node[img] at (1.1*\x,-1.1*\y)[label=below:{$\veps=\e$}]{\includegraphics[width=\imgw]{fig/atomic_cells/eps_nEps\l_col.png}};
		\begin{scope}[shift={(1.1*\x,-1.1*\y)},x={(1./256,0)},y={(0,-1./256)}]
			\draw[red,line width=1pt] (96,64) rectangle ++(32,32);
		\end{scope}
	}
	\foreach \x/\y/\l/\e in {2/0/0/2,3/0/3/0.25} {
		\node[img] at (1.1*\x,-1.1*\y)[label=below:{$\veps=\e$ (zoom)}]{\includegraphics[width=\imgw]{fig/atomic_cells/eps_zoom_nEps\l_col.png}};
	}
	\end{tikzpicture}
	}
	\caption{Primal iterates for the images from Figure \ref{fig:ExampleImages}, visualized as described in Section \ref{sec:Visualization}, on layer $l=8$, for $\veps=2$ ($\ell=4$ on that layer) and $\veps=0.25$ ($\ell=10$), see Section \ref{sec:NumericsPreliminaries} for a description of $\veps$-scaling and iteration schedule.
	For $\veps=2$ some blur between neighbouring cells is visible, for $\veps=0.25$ it is virtually gone (up to some inevitable overlap due to the discretization).}
	\label{fig:BasicCellsEps}
\end{figure}

\begin{figure}[t]
	\centering
	{\def\imgw{4.8cm}
	\begin{tikzpicture}[x=\imgw,y=\imgw,img/.style={inner sep=0pt,draw=black,line width=1pt}]
	\foreach \x/\y/\l in {0/0/3,1/0/4,2/0/5,0/1/6,1/1/7,2/1/8} {
		\node[img] at (1.1*\x,-1.15*\y)[label=below:{$l=\l$}]{\includegraphics[width=\imgw]{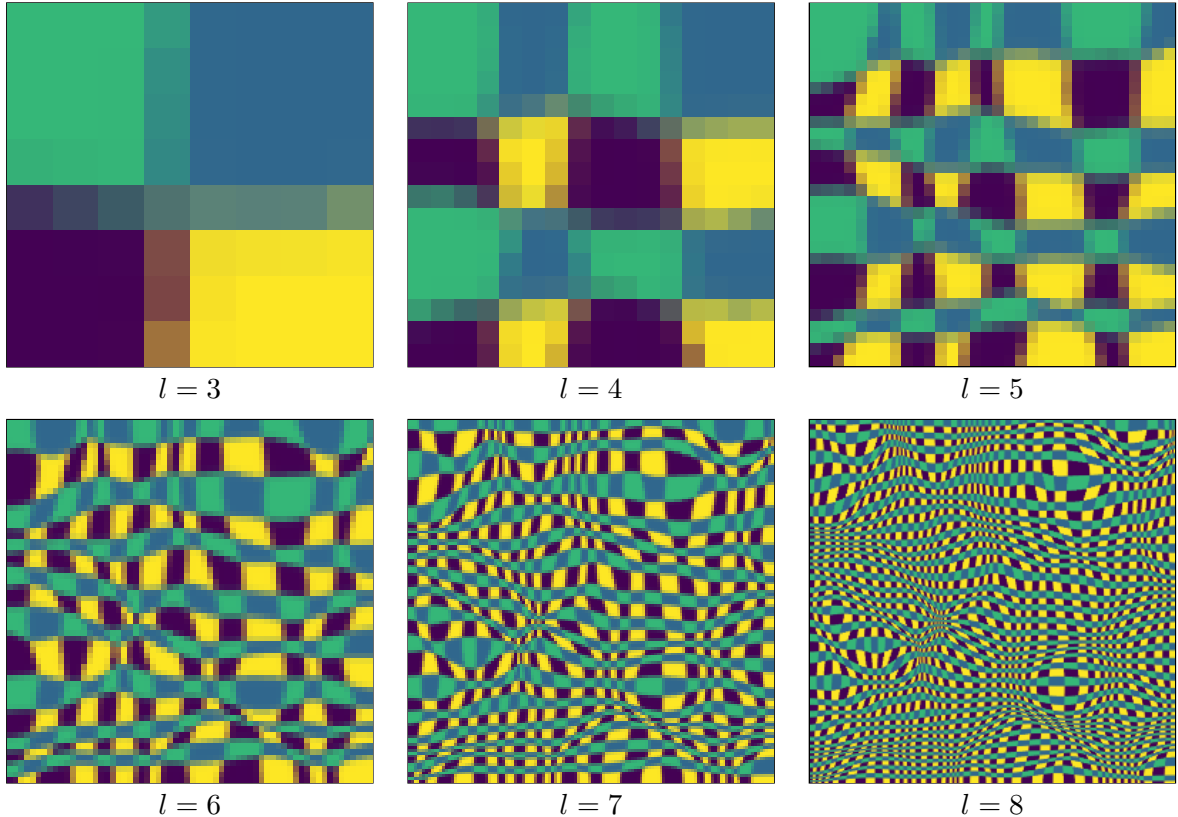}};
	}
	\end{tikzpicture}
	}
	\caption{Primal iterates for the images from Figure \ref{fig:ExampleImages}, visualized as described in Section \ref{sec:Visualization}, on layers $l=3$ to $l=8$, for $\veps=0.5 \cdot \Delta x_l$, where $\Delta x_l=2^{l-8}$ is the distance between adjacent grid points on a given layer, and $\ell=8$ on each layer, except on the finest layer, where $\veps=0.25$  and $\ell=10$.
	See Section \ref{sec:NumericsPreliminaries} for a description of $\veps$-scaling and iteration schedule.
	During each refinement a basic cell is split into four smaller basic cells and additional details in the optimal coupling become discernible.}
	\label{fig:BasicCellsLayers}
\end{figure}

Consider the optimal transport problem between the two images from Figure \ref{fig:ExampleImages} with size $256 \times 256$.
Various aspects of this problem are illustrated in Figures \ref{fig:BasicCellsIter}, \ref{fig:BasicCellsEps} and \ref{fig:BasicCellsLayers}.

Figure \ref{fig:BasicCellsIter} shows the effect of the first two iterations on the finest layer $l=8$.
The initial image is essentially uniform, since the initial marginals $\nu_i^0$, as generated with \eqref{eq:FineBasicMarginals} on basic cells $i \in I$ that were generated by sub-dividing the same coarse basic cell $j \in \hat{I}$ are, up to a constant factor, identical.
After the first iteration (on $\partA$) a grid pattern reemerges, after the second iteration (on $\partB$), it is hard to spot further changes (cf.~Figure \ref{fig:BasicCellsEps}, where for $\veps=2$ the iterate after four iterations is shown).

The effect of $\veps$-scaling within a given layer is visualized in Figure \ref{fig:BasicCellsEps}. For $\veps=2$ the checkerboard pattern is blurred between neighbouring cells. For $\veps=0.25$, neighbouring cells overlap by at most one pixel, which would also be true in the discrete unregularized setting (recall that our pixels do not all carry the same mass, so the discrete unregularized optimal coupling is usually not deterministic).
This demonstrates that the algorithm can be run until very low regularization parameters.

Finally, Figure \ref{fig:BasicCellsLayers} demonstrates the evolution of the multi-scale scheme.
It shows the final coupling on each layer for the smallest $\veps$ used on that layer, just before refinement (except for the finest layer). One can trace how increasingly finer structures of the optimal coupling emerge as the resolution improves.

\subsection{Quantitative evaluation}
\label{sec:NumericsQuantitative}
\begin{table}[t]
\centering
\scriptsize
\begin{tabular}{|@{\,}l@{\,}|*{6}{@{\,}r@{\,\,}}|}
\hline
image size & $64 \times 64$ & $128 \times 128$ & $256 \times 256$ & $512 \times 512$ & $1024 \times 1024$ & $2048 \times 2048$\\
\hline
\multicolumn{7}{|@{\,}l@{\,}|}{\textbf{domain decomposition} (stopping criterion: $\textnormal{$L^1$-error} \leq 10^{-4}$)} \\
\hline
\multicolumn{7}{|@{\,}l@{\,}|}{total runtime in seconds} \\
\hline
time (1 worker) & $(3.7\!\pm\!0.0)\tn{E{0}}$ & $(1.5\!\pm\!0.0)\tn{E{1}}$ & $(5.9\!\pm\!0.0)\tn{E{1}}$ & $(2.4\!\pm\!0.0)\tn{E{2}}$ & $(1.1\!\pm\!0.0)\tn{E{3}}$ & $(4.8\!\pm\!0.0)\tn{E{3}}$ \\
time (2 workers) & $(2.1\!\pm\!0.0)\tn{E{0}}$ & $(7.9\!\pm\!0.1)\tn{E{0}}$ & $(3.1\!\pm\!0.0)\tn{E{1}}$ & $(1.2\!\pm\!0.0)\tn{E{2}}$ & $(5.7\!\pm\!0.1)\tn{E{2}}$ & $(2.5\!\pm\!0.0)\tn{E{3}}$ \\
time (3 workers) & $(1.7\!\pm\!0.0)\tn{E{0}}$ & $(5.7\!\pm\!0.1)\tn{E{0}}$ & $(2.1\!\pm\!0.0)\tn{E{1}}$ & $(8.5\!\pm\!0.1)\tn{E{1}}$ & $(3.8\!\pm\!0.0)\tn{E{2}}$ & $(1.7\!\pm\!0.0)\tn{E{3}}$ \\
time (4 workers) & $(1.5\!\pm\!0.0)\tn{E{0}}$ & $(4.6\!\pm\!0.1)\tn{E{0}}$ & $(1.7\!\pm\!0.0)\tn{E{1}}$ & $(6.5\!\pm\!0.0)\tn{E{1}}$ & $(2.9\!\pm\!0.0)\tn{E{2}}$ & $(1.3\!\pm\!0.0)\tn{E{3}}$ \\
time (5 workers) & $(1.5\!\pm\!0.0)\tn{E{0}}$ & $(4.0\!\pm\!0.1)\tn{E{0}}$ & $(1.4\!\pm\!0.0)\tn{E{1}}$ & $(5.4\!\pm\!0.0)\tn{E{1}}$ & $(2.4\!\pm\!0.0)\tn{E{2}}$ & $(1.1\!\pm\!0.0)\tn{E{3}}$ \\
time final layer (5 w.) & $(8.7\!\pm\!0.4)\tn{E{-1}}$ & $(2.6\!\pm\!0.0)\tn{E{0}}$ & $(1.0\!\pm\!0.0)\tn{E{1}}$ & $(4.1\!\pm\!0.0)\tn{E{1}}$ & $(2.0\!\pm\!0.0)\tn{E{2}}$ & $(8.5\!\pm\!0.0)\tn{E{2}}$ \\
\hline
\multicolumn{7}{|@{\,}l@{\,}|}{times for specific functions in seconds (5 workers)} \\
\hline
\textsc{Sinkhorn} & $(1.3\!\pm\!0.0)\tn{E{0}}$ & $(3.6\!\pm\!0.1)\tn{E{0}}$ & $(1.2\!\pm\!0.0)\tn{E{1}}$ & $(4.9\!\pm\!0.0)\tn{E{1}}$ & $(2.3\!\pm\!0.0)\tn{E{2}}$ & $(1.0\!\pm\!0.0)\tn{E{3}}$ \\\textsc{BalanceMeasures} & $(1.3\!\pm\!0.0)\tn{E{-1}}$ & $(2.6\!\pm\!0.1)\tn{E{-1}}$ & $(7.3\!\pm\!0.3)\tn{E{-1}}$ & $(2.2\!\pm\!0.1)\tn{E{0}}$ & $(7.0\!\pm\!0.1)\tn{E{0}}$ & $(2.6\!\pm\!0.0)\tn{E{1}}$ \\\textsc{Truncate} & $(1.4\!\pm\!0.1)\tn{E{-2}}$ & $(5.7\!\pm\!0.5)\tn{E{-2}}$ & $(2.1\!\pm\!0.1)\tn{E{-1}}$ & $(8.1\!\pm\!0.2)\tn{E{-1}}$ & $(3.2\!\pm\!0.1)\tn{E{0}}$ & $(1.3\!\pm\!0.0)\tn{E{1}}$ \\refinement & $(3.3\!\pm\!0.2)\tn{E{-2}}$ & $(1.0\!\pm\!0.1)\tn{E{-1}}$ & $(3.2\!\pm\!0.1)\tn{E{-1}}$ & $(1.2\!\pm\!0.0)\tn{E{0}}$ & $(5.0\!\pm\!0.0)\tn{E{0}}$ & $(2.1\!\pm\!0.0)\tn{E{1}}$ \\
\hline
\multicolumn{7}{|@{\,}l@{\,}|}{total number of entries in all $(\nu_i)_{i \in I}$} \\
\hline
maximum & $(8.4\!\pm\!0.2)\tn{E{4}}$ & $(3.5\!\pm\!0.0)\tn{E{5}}$ & $(1.4\!\pm\!0.0)\tn{E{6}}$ & $(5.5\!\pm\!0.0)\tn{E{6}}$ & $(2.1\!\pm\!0.0)\tn{E{7}}$ & $(8.3\!\pm\!0.0)\tn{E{7}}$ \\final & $(1.7\!\pm\!0.0)\tn{E{4}}$ & $(6.7\!\pm\!0.1)\tn{E{4}}$ & $(2.6\!\pm\!0.0)\tn{E{5}}$ & $(1.0\!\pm\!0.0)\tn{E{6}}$ & $(3.9\!\pm\!0.0)\tn{E{6}}$ & $(1.6\!\pm\!0.0)\tn{E{7}}$ \\
\hline
\multicolumn{7}{|@{\,}l@{\,}|}{solution quality} \\
\hline
relative PD gap & $(1.9\!\pm\!1.6)\tn{E{-4}}$ & $(1.0\!\pm\!0.6)\tn{E{-4}}$ & $(1.0\!\pm\!0.7)\tn{E{-4}}$ & $(4.3\!\pm\!3.2)\tn{E{-5}}$ & $(1.0\!\pm\!1.3)\tn{E{-4}}$ & $(6.2\!\pm\!3.0)\tn{E{-5}}$ \\$L^1$ marginal error $X$ & $(5.1\!\pm\!0.3)\tn{E{-5}}$ & $(5.0\!\pm\!0.2)\tn{E{-5}}$ & $(4.9\!\pm\!0.1)\tn{E{-5}}$ & $(4.7\!\pm\!0.1)\tn{E{-5}}$ & $(4.8\!\pm\!0.1)\tn{E{-5}}$ & $(4.4\!\pm\!0.1)\tn{E{-5}}$ \\$L^1$ marginal error $Y$ & $(7.8\!\pm\!25.8)\tn{E{-7}}$ & $(1.2\!\pm\!1.3)\tn{E{-8}}$ & $(3.7\!\pm\!0.2)\tn{E{-8}}$ & $(1.5\!\pm\!0.1)\tn{E{-7}}$ & $(4.4\!\pm\!0.1)\tn{E{-7}}$ & $(1.1\!\pm\!0.0)\tn{E{-6}}$ \\
\hline
\multicolumn{7}{|@{\,}l@{\,}|}{\textbf{single sparse multi-scale Sinkhorn} (stopping criterion: $\textnormal{$L^\infty$-error} \leq 10^{-7}$)} \\
\hline
time & $(3.6\!\pm\!0.6)\tn{E{0}}$ & $(2.7\!\pm\!0.7)\tn{E{1}}$ & $(1.3\!\pm\!0.6)\tn{E{2}}$ & $(2.8\!\pm\!1.2)\tn{E{2}}$ & -  & -  \\\# kernel entries, max & $(6.1\!\pm\!0.4)\tn{E{5}}$ & $(2.8\!\pm\!0.1)\tn{E{6}}$ & $(1.2\!\pm\!0.0)\tn{E{7}}$ & $(5.1\!\pm\!0.0)\tn{E{7}}$ & -  & -  \\\# kernel entries, final & $(9.3\!\pm\!0.3)\tn{E{4}}$ & $(4.0\!\pm\!0.1)\tn{E{5}}$ & $(1.7\!\pm\!0.0)\tn{E{6}}$ & $(6.9\!\pm\!0.1)\tn{E{6}}$ & -  & -  \\$L^1$ marginal error $X$ & $(1.1\!\pm\!0.3)\tn{E{-4}}$ & $(3.1\!\pm\!0.9)\tn{E{-4}}$ & $(7.5\!\pm\!2.1)\tn{E{-4}}$ & $(2.6\!\pm\!0.7)\tn{E{-3}}$ & -  & -  \\relative dual score & $(-1.8\!\pm\!2.3)\tn{E{-4}}$ & $(-1.4\!\pm\!0.5)\tn{E{-3}}$ & $(-4.9\!\pm\!1.7)\tn{E{-3}}$ & $(-6.9\!\pm\!2.9)\tn{E{-3}}$ & -  & -  \\
\hline
\end{tabular}

\caption{Summary of average performance (with standard deviation) of the domain decomposition algorithm and a single sparse Sinkhorn algorithm \cite{SchmitzerScaling2019} for comparison. For each image size results are averaged over 45 problem instances (Section \ref{sec:NumericsPreliminaries}). %
The relative PD gap and the relative dual score are defined in \eqref{eq:RelPDGap} and \eqref{eq:RelDualScore}.}
\label{tab:Summary}
\end{table}

\begin{figure}[t]
	\centering
	\includegraphics[]{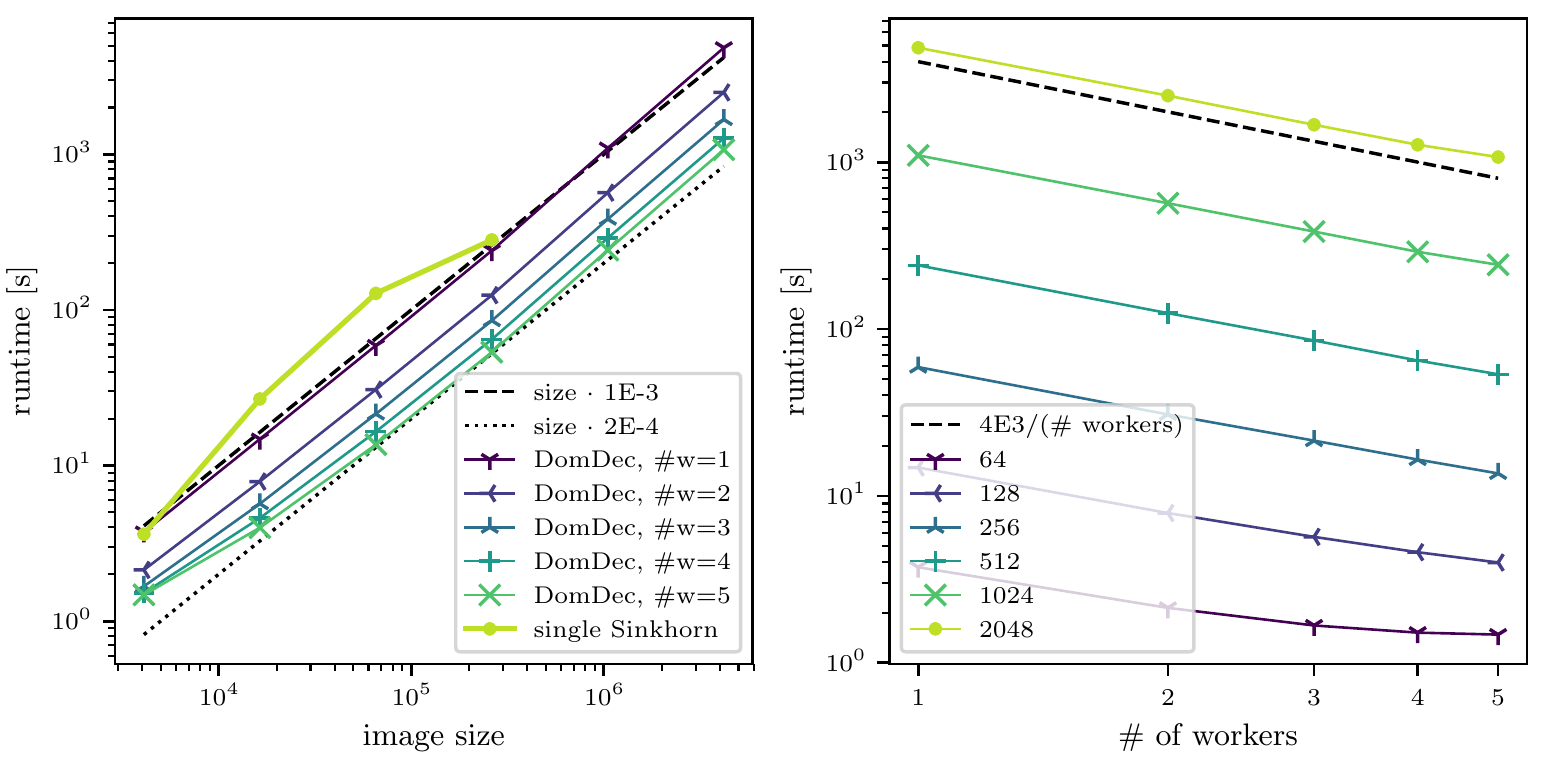}
	\caption{Average runtimes of domain decomposition algorithm. %
	The runtime increases approximately linearly (or slightly super-linearly) with the size of the marginal images, allowing the solution of large problems. The runtime of a single Sinkhorn algorithm (with different stopping criterion) is shown for comparison (see Section \ref{sec:NumericsSingle} for details).
	Runtime decreases approximately in inverse proportionality with the number of worker threads (up to five workers), indicating efficient parallelization (except for small images where overhead is more significant and thus the effect of parallelization saturates earlier).}
	\label{fig:Runtimes}
\end{figure}

\begin{figure}[t]
	\centering
	\includegraphics[]{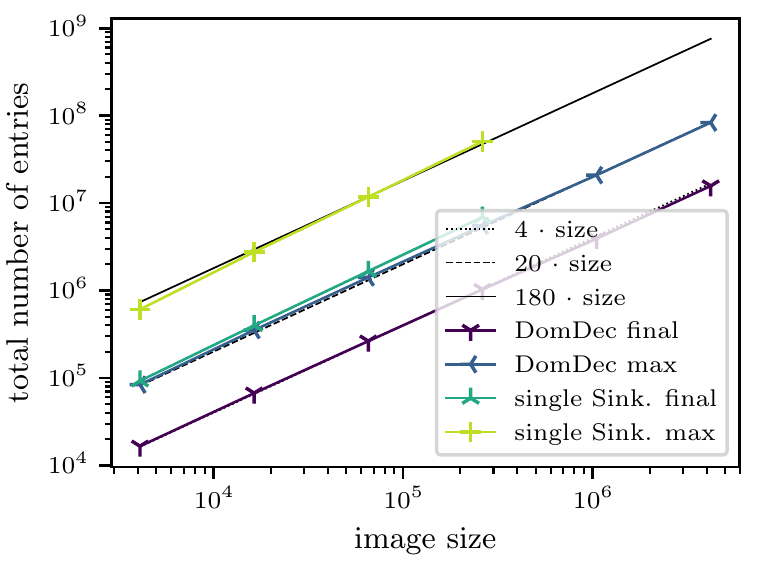}
	\caption{Number of non-zero entries to store the partial marginal list $(\nu_i)_{i \in I}$ at the finest layer. Maximal number (for largest value of $\veps$) and final number are shown. %
	For comparison the number of non-zero entries in the truncated (stabilized) kernel matrix of the single Sinkhorn algorithm is shown (again, maximal and final number on finest layer). %
	All numbers scale linearly in the image size, indicating that the truncation schemes efficiently reduce memory demand. For domain decomposition the final number of entries per pixel is approximately four, i.e.~the coupling is rather concentrated and the effect of entropic blurring is weak. Maximal and final entry numbers are significantly lower with domain decomposition, the maximal number (which represents the memory bottleneck) is almost reduced by a factor 10.}
	\label{fig:Sparsity}
\end{figure}

Having convinced ourselves by means of visualizations that the entropic domain decomposition algorithm seems to work as intended, we now turn to a more quantitative analysis of its performance.
A comparison to the performance of a single sparse multi-scale Sinkhorn algorithm \cite{SchmitzerScaling2019} is provided in Section \ref{sec:NumericsSingle}.

\paragraph{Runtime.} We measure the runtime of the algorithm on different problem sizes, with the number of worker threads ranging from 1 to 5. For simplicity, for a single worker thread, the implementation still uses two threads (one master, one worker), but the performance cannot be distinguished from a true single-thread implementation, since the master thread is essentially idle in this case.
In addition we are interested in the time required for solving the final (i.e.~finest) layer, and for the times required by the main sub-routines, i.e.~the Sinkhorn solver, the measure balancing, truncation and refinement.
These are listed in Table \ref{tab:Summary} and the total runtimes are visualized in Figure \ref{fig:Runtimes}.

We find that the runtime scales approximately linearly (or at most slightly super-linear) with the size of the marginals which is crucial for solving large problems.
The time required for solving the final layer comprises approximately 80\% of the total time, indicating that even with a good initialization, solving the finest layer is a challenging problem.
The time spent on the \textsc{Sinkhorn} sub-solver for the composite cell problems accounts for approximately 96\% of the runtime for large problems, which means that other functions cause relatively little overhead.
On small problems these ratios are a bit lower, since the overhead of the multi-scale scheme is more significant.

For large problems the runtime is approximately proportional to the inverse number of workers (up to five workers), indicating that parallelization is computationally efficient. Again, for small problems, computational overhead becomes more significant, such that the benefit of parallelization saturates earlier.
For large problems we estimate that MPI communication accounts for approximately 3\% of time in a domain decomposition iteration.

\paragraph{Sparsity.}
In addition to runtime the memory footprint of the algorithm is an important criterion for practicability.
It is dominated by storing the list $(\nu_i)_{i \in I}$ (after truncation at $10^{-15}$, see Section \ref{sec:ComputOverview}), since only a few small sub-problems are solved simultaneously at any given time.
We are therefore interested in the number of non-zero entries in this list throughout the algorithm. 
This number is usually maximal on the finest layer (most pixels) and for the highest value of $\veps$ on that layer (with strongest entropic blur), which therefore constitutes the memory bottleneck of the algorithm.

This maximal number and the final number of entries are listed in Table \ref{tab:Summary} and visualized in Figure \ref{fig:Sparsity}.
We find that both scale approximately linear with marginal size. Again, this is a prerequisite for solving large problems.
Maximally the algorithm requires approximately 20 entries per pixel in the first image, and approximately 4 per pixel upon convergence. The latter indicates that entropic blur is indeed strongly reduced in the final iterate.

\paragraph{Solution quality.}
The data reported in Table \ref{tab:Summary} also confirms that the algorithm provides high-quality approximate solutions of the problem.
The relative primal-dual (PD) gap is defined as (see \eqref{eq:EntropicOTDualObjective} for the definition of $J$)
\begin{align}
	\label{eq:RelPDGap}
	\frac{\KL(\pi|\kernel)-J(u,v)}{\KL(\pi|\kernel)-\|\kernel\|},
\end{align}
i.e.~we divide the primal-dual gap by the primal score (where, for simplicity we subtract the constant term $\|\kernel\|$). The dual variables $(u,v)$ are obtained from the composite duals by gluing, as discussed in Section \ref{sec:ComputEvaluatePDGap}.
We find that this is on the order of $10^{-4}$, i.e.~a high relative accuracy.

The $L^1$-error of the $X$-marginal of the primal iterate is consistently even smaller than the prescribed bound $\err=10^{-4}$ (cf.~Section \ref{sec:ComputBalanceMeasures}). This happens since we do not check the stopping criterion for the \textsc{Sinkhorn} sub-solver after every iteration.
The $L^1$-error of the $Y$-marginal is not (numerically) zero, since a small error accumulates over time during the several additions and subtractions of the partial marginals and in the \textsc{BalanceMeasures} function. But since it is still substantially below the $X$-error we do not consider this to be a problem. Additional stabilization-routines could be added to the algorithm, should it become necessary.

\subsection{Comparison to single Sinkhorn algorithm}
\label{sec:NumericsSingle}

As a reference we compare the performance of the domain decomposition algorithm to that of a single Sinkhorn algorithm (with adaptive kernel truncation, coarse-to-fine scheme and $\veps$-scaling as in \cite{SchmitzerScaling2019}). The results are also summarized in Table \ref{tab:Summary}, and visualized in Figures \ref{fig:Runtimes} and \ref{fig:Sparsity}.
The single Sinkhorn algorithm was limited to sizes up to $512 \times 512$ due to numerical stability and memory issues (see below for details).

Both algorithms have several parameters that influence the trade-off between accuracy and runtime.
One important parameter is the stopping criterion for the sub-solver Sinkhorn algorithm in the former, and for the global single Sinkhorn algorithm in the latter.
Another one is the truncation threshold for partial marginals (cf.~Section \ref{sec:ComputOverview}) in the former, and for the (stabilized) kernel matrix in the latter \cite[Section 3.3]{SchmitzerScaling2019}.

\paragraph{Stopping criterion.} The stopping criterion for the domain decomposition solver was picked such that the global $L^1$-error of the marginals is below $\err=10^{-4}$ (cf.~Sections \ref{sec:ComputBalanceMeasures} and \ref{sec:NumericsPreliminaries}).
We observe that if we pick the same criterion for the single Sinkhorn solver, it takes much longer than the domain decomposition method (even without parallelization) and the runtime scales significantly super-linearly with marginal size.

A more detailed analysis reveals that the single Sinkhorn algorithm spends a major part of time on the largest $\veps$ on the finest layer, i.e.~immediately after the last refinement.
We conjecture that this is due to the fact that the single Sinkhorn algorithm represents the primal iterates only implicitly through the dual iterates $(u,v)$ via $(u \otimes v) \cdot \kernel$ (cf.~Section \ref{sec:ReminderEntropicOT}).
During layer-refinement only the dual variables are refined explicitly. This can introduce quite non-local marginal errors in the primal iterate (i.e.~mass must be moved far to correct them) that take many iterations to correct.
We tried various heuristics to reduce this effect (e.g.~more sophisticated interpolation of the dual variables during refinement) but unfortunately without success.

In contrast, the domain decomposition method keeps explicitly track of primal and dual (partial) iterates, both of which are refined separately during layer transitions. In particular the total mass of the refined primal iterate within each refined basic cell is correct after refinement (cf.~\eqref{eq:FineBasicMarginals} and Proposition \ref{prop:FineBasicMarginals}) and thus fewer global mass movements are required for correcting these errors.

Therefore, as in \cite{SchmitzerScaling2019}, numerically we use the $L^\infty$ error as stopping criterion for the single Sinkhorn algorithm. We set the stopping accuracy to $10^{-7}$, such that (without parallelization) both algorithms have approximately the same runtime.

\paragraph{Sparsity.}
To perform its iterations the single Sinkhorn algorithm must keep track of a truncated (stabilized) kernel matrix, i.e.~one column of non-zero entries for each pixel of the first marginal.
Following \cite[Section 3.3, Equation (3.8)]{SchmitzerScaling2019} we keep entries of the (stabilized) kernel matrix where $(u \otimes v) \cdot \sKernel \geq \theta$ with the choice $\theta=10^{-10}$.

The domain decomposition method (in the form of Algorithm \ref{alg:DomDecComput}) only needs to store a column of non-zero entries for each basic cell (of size $s \times s$) in the first marginal.
Thus, to represent the primal iterate with comparable accuracy, the number of non-zero entries required in the domain decomposition method is reduced by a factor $s^{-2}$.

In this sense, for the parameter choices in our experiments, the domain decomposition algorithm represents the primal iterates slightly more accurately.
At the same time, the number of entries to store the final iterate is reduced to approximately 15\% compared to the single Sinkhorn algorithm. The maximum number of entries is even reduced to approximately 11\%.
On problem size $2048 \times 2048$ the single Sinkhorn algorithm ran out of memory on our test machine.

\paragraph{Solution quality.}
As discussed above, to obtain similar runtimes for both algorithms, a less strict stopping criterion had to be chosen for the single Sinkhorn algorithm.
Therefore, it produces higher $L^1$-marginal errors for the $X$-marginal that increase with problem size. The $Y$-marginal error is (numerically) zero, since the algorithm was terminated after a $Y$-iteration (cf.~Remark \ref{rem:Sinkhorn}).
The relative dual score reported in Table \ref{tab:Summary} is defined by
\begin{align}
\label{eq:RelDualScore}
\frac{J(u_{\tn{single}},v_{\tn{single}})-J(u_{\tn{domdec}},v_{\tn{domdec}})}{J(u_{\tn{domdec}},v_{\tn{domdec}})-\|\kernel\|},
\end{align}
cf.~\eqref{eq:RelPDGap}, where $(u_{\tn{domdec}},v_{\tn{domdec}})$ refers to the (glued) final dual variables of the domain decomposition method (Section \ref{sec:ComputEvaluatePDGap}) and $(u_{\tn{single}},v_{\tn{single}})$ to the final dual iterates of the single Sinkhorn algorithm. The fact that this number is negative indicates that the domain decomposition provides better dual candidates than the single Sinkhorn.

\paragraph{Numerical stability.}
For small regularization the standard Sinkhorn algorithm becomes numerically unstable due to the finite precision of discretized numbers on computers. This can be remedied to a large extent by the absorption technique \cite{SchmitzerScaling2019}.
But refinement between layers remains a delicate point since the interpolated dual variables from the coarser layer may not provide a perfect initial guess at the finer layer.
This can lead to numerically zero or empty columns and rows in the (stabilized) kernel matrix and thus will lead to invalid entries in the scalings $(u,v)$.
While it is possible to fix these cases, on large problems this is relatively cumbersome and computationally inefficient. We therefore decided to abort the algorithm in these cases instead.
In our experiments this concerned 3 of the 45 examples of size $512 \times 512$ and the majority of the $1024 \times 1024$ examples (which we have therefore removed from the comparison).

These issues may also arise in the domain decomposition algorithm but there they are localized to small problems on composite cells which are easy to fix (e.g.~by temporarily increasing $\veps$). We have implemented a corresponding safe-guard. In all our examples it was invoked on two $1024 \times 1024$ instances.

\paragraph{Summary.}
Clearly the domain decomposition method is superior to the single Sinkhorn algorithm in various aspects.
In comparison, the domain decomposition method (without parallelization) produces more accurate primal and dual iterates in comparable time, with approximately 11\% of the memory.
It runs more reliably on large problems, since numerical hickups of the Sinkhorn algorithm can be fixed locally.
On top of that the domain decomposition method allows for real non-local parallelization, whereas the Sinkhorn algorithm can only be parallelized very locally over the matrix-vector multiplications.

\section{Conclusion and outlook}
\label{sec:Conclusion}

In this article we studied Benamou's domain decomposition algorithm for optimal transport in the entropy regularized setting.
The key observation is that the regularized version converges to the unique globally optimal solution under very mild assumptions.
We proved linear convergence of the algorithm w.r.t~the KL-divergence and illustrated the (potentially very slow) rates with numerical examples.

We then argued that on `geometric problems' the algorithm should converge much faster.
To confirm this experimentally, we discussed a practical, efficient version of the algorithm with reduced memory footprint, numerical safeguards for approximation errors, primal-dual certificates and a coarse-to-fine scheme.
With this we were able to compute optimal transport between images with $\approx$ 4 megapixels on a standard desktop computer, matching two megapixel images in approximately four minutes.
Even without parallelization, the algorithm compared favourably to a single Sinkhorn algorithm in terms of runtime, memory, solution quality and numerical reliability.
With parallelization the runtime was efficiently reduced with little communication overhead.

A practical open question for future research is the development of more sophisticated and efficient implementations, e.g.~using GPUs and/or multiple machines, with more sophisticated parallelization structure (e.g.~no single unit needs to keep track of the full problem).
This should then be tested on 3D problems and with more general cost functions.
In the course of this, parameters such as the basic cell size should be studied in more detail.

On the theoretical side the convergence analysis on `geometric problems' is an interesting challenge, to provide a more thorough understanding for the low number of required iterations. We conjecture that a `discrete Helmholtz decomposition' as discussed in Section \ref{sec:ComputEvaluatePDGap} may become relevant for this. The relation between optimal transport and the Helmholtz decomposition was already noted in \cite{MonotoneRerrangement-91} and exploited numerically in \cite{OptimalTransportWarping}.

\paragraph{Acknowledgement.} This work was supported by the Emmy Noether programme and the SFB TRR 109 of the DFG.
\appendix
\section{Appendix}

\begin{proof}[Proof of Proposition \ref{prop:EntropicOTBasic}]
	\hfill \\
	\eqref{item:EntropicOTBasic:Unique} is provided by \cite[Theorem 2.1]{csiszar1975}. Existence of $(u^\ast,v^\ast)$ in \eqref{item:EntropicOTBasic:Scaling} is established by \cite[Theorem 3]{RueschendorfThomsen-Schroedinger1993}.
	Since $\pi^\ast$ is unique, $u^\ast \otimes v^\ast$ is unique $K$-almost everywhere. Since $k$ is strictly bounded away from zero, this translates to $(\mu \otimes \nu)$-a.e.~uniqueness and finally to $\mu$-a.e.~and $\nu$-a.e.~uniqueness of $u^\ast$ and $v^\ast$ up to re-scaling.
	The integral equations in \eqref{item:EntropicOTBasic:Marginals} follow from the marginal constraints $\pi^\ast \in \Pi(\hat{\mu},\hat{\nu})$ and the definition of $\kernel$.
	Using now that $\exp(-\|c\|_\infty/\veps) \leq \sKernel(x,y) \leq 1$ for all $(x,y) \in X \times Y$, one easily deduces that for all $x \in X$,
	\[
	\exp(-\|c\|_\infty/\veps) \int_Y v^\ast(y)\,\diff\nu(y) \leq \int_Y \sKernel(x,y)v^\ast(y)\,\diff\nu(y) \leq \int_Y v^\ast(y)\,\diff\nu(y),
	\]
	which gives $v^\ast \in L^1(Y, \nu)$ and $\|v^\ast\|_{L^1(Y,\nu)}>0$ (otherwise there would exist no $u^\ast$ to satisfy the left equation of \eqref{eq:ExtremalityCondition}, recall that we assumed $\|\hat{\mu}\|>0$).
	Analogously we see that $u^\ast \in L^1(X, \mu)$ and $\|u^\ast\|_{L^1(X,\mu)}>0$, so that \eqref{eq:uBound} and \eqref{eq:vBound} follow from \eqref{eq:ExtremalityCondition}.
	Since $\RadNik{\hat{\mu}}{\mu}$ and $\RadNik{\hat{\nu}}{\nu}$ were assumed to be (essentially) bounded this implies $u^\ast \in L^\infty_+(X,\mu)$ and $\nu^\ast \in L^\infty_+(Y,\nu)$.
	Finally, by the first inequality of \eqref{eq:uBound},
	\begin{align*}
	\int_X |\log u^\ast|\,\diff \hat{\mu} = \int_X |\log u^\ast| \RadNik{\hat{\mu}}{\mu}\,\diff \mu
	\leq \|v^\ast\|_{L^1(Y,\nu)} \int_X |\log u^\ast| u^\ast \diff \mu < \infty
	\end{align*}
	where the last inequality is due to the $\mu$-essential upper bound on $u^\ast$ and the lower-boundedness of $\R_+ \ni s \mapsto s \log s$ (with limit $0$ for $s=0$). By this, and an analogous argument for $v^\ast$, we find $\log u^\ast \in L^1(X,\hat{\mu})$, $\log v^\ast \in L^1(Y,\hat{\nu})$.
\end{proof}

\begin{proof}[Proof of Proposition \ref{prop:Duality}]
	\hfill \\
	\eqref{item:Duality:Defined}. Since $u \in L^\infty_+(X,\mu)$, $\log u$ is also $\mu$-essentially bounded from above, thus the first integral is either finite or $-\infty$. The same holds for the second term. The third and fourth terms of $J$ are always finite. Hence, the value of $J$ is unambiguously defined and contained in $[-\infty,\infty)$.
	
	\noindent
	\eqref{item:Duality:Gap}. First observe that $\KL(\pi|\kernel) > -\infty$ and $J(u, v) < +\infty$ by definition. If $J(u, v) = -\infty$ or $\KL(\pi|\kernel) = +\infty$ there is nothing to prove. Thus, let us assume both values are finite and directly compute
	\begin{align*}
	\KL(\pi|\kernel) - J(u, v)
	& =  \int_{X\times Y} \big[
	\varphi\left(\RadNik{\pi}{\kernel}\right)
	+ \left(u \otimes v-1\right)
	\big]\,\diff \kernel
	- \Big( \underbrace{\int_X \log u\,\diff \hat{\mu} + \int_Y \log v\,\diff \hat{\nu}}_{=\int_{X \times Y} (\log u \oplus \log v)\,\diff \pi} \Big) \\
	&= \int_{\substack{X\times Y:\\u \otimes v>0}} \big[
	\varphi\left(\RadNik{\pi}{\kernel}\right) + \underbrace{\varphi^\ast\left(\log u \oplus \log v\right)}_{=u \otimes v-1} - (\log u \oplus \log v)\cdot \frac{\diff\pi}{\diff \kernel}  \big]\,\diff \kernel \\
	&\quad + \int_{\substack{X\times Y:\\u \otimes v = 0}} \underbrace{\big[
		\varphi\left(0\right) -1 \big]}_{=0}\,\diff \kernel \geq 0
	\end{align*}
	Here we used that $J(u,v)>-\infty$ implies $u>0$ $\hat{\mu}$-a.e.~and $v>0$ $\hat{\nu}$-a.e., so $\pi \in \Pi(\hat{\mu},\hat{\nu})$ has no mass in the region of the second integral (where $u\otimes v=0$). The integrand in the first integral is non-negative by the Fenchel--Young inequality and we added the second (vanishing) integral to emphasize that all contributions were considered.
	
	\noindent
	\eqref{item:Duality:OptimalityCondition}. Let $\pi^\ast$ be of the described form with scalings $(u^\ast,v^\ast)$. Arguing as in Proposition \ref{prop:EntropicOTBasic} we find that $\log u^\ast \in L^1(X,\hat{\mu})$, $\log v^\ast \in L^1(X,\hat{\nu})$ and consequently
	\begin{align*}
	\KL(\pi^\ast|\kernel) & = \int_X \log u^\ast\,\diff \hat{\mu} + \int_Y \log v^\ast\,\diff \hat{\nu} - \|\pi^\ast\| + \|K\|=J(u^\ast,v^\ast)
	\end{align*}
	and so by \eqref{item:Duality:Gap}, $\pi^\ast$ and $(u^\ast,v^\ast)$ are optimal for \eqref{eq:EntropicOTPrimal} and \eqref{eq:EntropicOTDual}.
	Existence of such a $\pi^\ast$ is provided by Proposition \ref{prop:EntropicOTBasic}, and therefore, maximizers for \eqref{eq:EntropicOTDual} exist.
\end{proof}

\begin{proof}[Proof of Lemma \ref{lem:ScalingLemma}]
	\hfill \\
	\eqref{item:ScalingLemma:Bounds}. The result follows from Proposition \ref{prop:EntropicOTBasic} (\ref{item:EntropicOTBasic:Marginals}) applied to $\hat{\mu} = \mu_J$ and $\hat{\nu} = \iterl{\nu}_J$. Hence, $\log \iterl{u}_J \in L^1(X, \mu_J)$ and $\log \iterl{v}_J \in L^1(Y, \iterl{\nu}_J)$, and in particular $\|\iterl{u}_J\|_{L^1(X,\mu)}>0$ and $\|\iterl{v}_J\|_{L^1(Y,\nu)}>0$. The bounds on $\iterl{u}_J$ and $\iterl{v}_J$ follow from \eqref{eq:uBound} and \eqref{eq:vBound} in the form
	\begin{align*}
	\tfrac{1}{\|\iterl{v}_J\|_1} \RadNik{\mu_J}{\mu}(x) & \leq \iterl{u}_J(x) \leq \tfrac{\exp(\|c\|_\infty/\veps)}{\|\iterl{v}_J\|_1} \RadNik{\mu_J}{\mu}(x) \quad \text{for } \mu_J\text{-a.e.~}x \in X_J, \\
	\tfrac{1}{\|\iterl{u}_J\|_1} \RadNik{\iterl{\nu}_J}{\nu}(y) & \leq \iterl{v}_J(y) \leq \tfrac{\exp(\|c\|_\infty/\veps)}{\|\iterl{u}_J\|_1} \RadNik{\iterl{\nu}_J}{\nu}(y) \quad \text{for } \nu\text{-a.e.~}y \in Y,
	\end{align*}
	combined with $(\diff\mu_J/\diff\mu)(x) = 1$ for $\mu_J$-a.e.~$x \in X_J$ and $(\diff\iterl{\nu}_J/\diff\nu)(y) \leq 1$ for $\nu$-a.e.~$y \in Y$. Observe that the bounding constants, in this case $\underbar{C} = \min({1}/{\|\iterl{v}_J\|_1}, {1}/{\|\iterl{u}_J\|_1})$ and $\bar{C} = \max({\exp(\|c\|_\infty/\veps)}/{\|\iterl{v}_J\|_1}, {\exp(\|c\|_\infty/\veps)}/{\|\iterl{u}_J\|_1})$, generally depend on the scaling choice for the duals (Proposition \ref{prop:EntropicOTBasic} (\ref{item:EntropicOTBasic:Scaling})) and are not universal over cells and iterations.
	
	\noindent
	\eqref{item:ScalingLemma:Ratio}. First observe that the scaling factors $\iterl{u}_J$, $\iterl{v}_J$ satisfy
	\begin{equation}\label{eq:ExtremalityConditionIterl}
	\left\{
	\begin{aligned}
	\iterl{u}_J(x) \int_Y \sKernel(x,y)\iterl{v}_J(y)\,\diff\nu(y) &= \RadNik{{\mu_J}}{\mu}(x) \qquad \text{for $\mu$-a.e.~$x \in X$,} \\
	\iterl{v}_J(y) \int_X \sKernel(x,y)\iterl{u}_J(x)\,\diff\mu(x) &= \RadNik{\iterl{\nu_J}}{\nu}(y) \qquad \text{for  $\nu$-a.e.~$y \in Y$,}
	\end{aligned}
	\right.
	\end{equation}
	thanks to Proposition \ref{prop:EntropicOTBasic} \eqref{item:EntropicOTBasic:Marginals} with $\hat{\mu} = \mu_J$ and $\hat{\nu} = \iterl{\nu}_J$. The first equation in \eqref{eq:ExtremalityConditionIterl} yields
	\begin{align*}
	\frac{\iterl{u_J}(x_1)}{\iterl{u_J}(x_2)} & =
	\frac{\int_Y \exp(-c(x_2,y)/\veps)\,\iterl{v}_J(y)\,\diff \nu(y)}{\int_Y \exp(-c(x_1,y)/\veps)\,\iterl{v}_J(y)\,\diff \nu(y)}
	\end{align*}
	for $\mu_J$-a.e.~$x_1, x_2 \in X_J$, again using that $(\diff\mu_J/\diff\mu)(x_1) = (\diff\mu_J/\diff\mu)(x_2) = 1$. Thus, \eqref{item:ScalingLemma:Ratio} follows by noting that $c(x_2,y)\geq c(x_1,y) - \|c\|$.
	
	\noindent
	\eqref{item:DensityAbsBound:Abs}. For the upper bound we combine \eqref{eq:ExtremalityConditionIterl} and \eqref{eq:ScalingRatioBound} on the basic cell $i$, to obtain
	\begin{align*}
	\iterl{u}_i(x) \cdot \iterl{v}_i(y)
	& = \iterl{u}_{i}(x) \cdot \RadNik{\iterl{\nu}_i}{\nu}(y)\,
	\left(\int_{X} \sKernel(x',y)\,\iterl{u}_i(x')\,\diff \mu_i(x')\right)^{-1} \\
	& \leq \iterl{u}_{i}(x) \cdot \RadNik{\iterl{\nu}_i}{\nu}(y)\,
	\left(\exp(-2\|c\|/\veps) \cdot \iterl{u}_i(x) \cdot \int_{X} \diff \mu_i(x')\right)^{-1}
	\leq \frac{\exp(2\|c\|/\veps)}{\|\mu_i\|}
	\end{align*}
	where we have used $\sKernel(x',y) \geq \exp(-\|c\|/\veps)$ in the first inequality and $(\diff\iterl{\nu}_i/\diff\nu)(y)\leq 1$ for $\nu$-a.e.~$y\in Y$ in the second.
	The lower bound works almost entirely analogous by using $\sKernel(x',y)\leq 1$.
	
	\noindent
	\eqref{item:DensityAbsBound:Rel}. With $j \in J \in \iterl{\partGeneric}$, we recall
	\begin{equation}\label{eq:nuboundcell}
	\iterl{\nu}_J = \sum_{k\in J}P_Y(\iterlm{\pi} \restr (X_k \times Y)) = \sum_{k \in J} \iterlm{\nu}_{k} \geq \iterlm{\nu}_{j}.
	\end{equation}
	Since $i, j \in J \in \iterl{\partGeneric}$ our convention is that $\iterl{v}_i=\iterl{v}_{j}=\iterl{v}_J$.
	Combining \eqref{eq:ScalingRatioBound} and \eqref{eq:nuboundcell}, one obtains for $(\mu_i \otimes \mu_{j} \otimes \nu)$-a.e.~$(x,x',y) \in X_i \times X_j \times Y$
	\begin{align*}
	\iterl{u}_i(x) \cdot \iterl{v}_i(y) &= \fint_X \iterl{u}_i(x) \cdot \iterl{v}_i(y) \,\diff\mu_J(\hat{x}) \geq \exp(-\|c\|/\veps) \cdot \iterl{v}_J(y)\cdot \, \fint_{X} \iterl{u}_J(\hat{x})\, \sKernel(\hat{x},y)\,\diff \mu_J(\hat{x}) \\
	&= \tfrac{\exp(-\|c\|/\veps)}{\|\mu_J\|}\cdot \RadNik{\iterl{\nu}_J}{\nu}(y) \geq \exp(-\|c\|/\veps) \cdot \RadNik{\iterlm{\nu}_{j}}{\nu}(y) \\
	&= \exp(-\|c\|/\veps)\cdot \iterlm{v}_{j}(y)\,\int_{X} \iterlm{u}_{j}(\hat{x})\, \sKernel(\hat{x},y)\,\diff \mu_j(\hat{x}) \\
	&\geq \exp(-3\|c\|/\veps)\|\mu_j\| \cdot \iterlm{u}_{j}(x') \cdot \iterlm{v}_{j}(y).
	\end{align*}
	Eventually, \eqref{eq:RadNikLowerBound1Step} follows with similar arguments: we use the first two lines in the derivation here above and $\exp(-\|c\|/\veps) \leq \sKernel(x,y) \leq 1$ to obtain, for $\nu$-a.e.~$y \in Y$
	\begin{align*}
	\RadNik{\iterl{\nu}_i}{\nu}(y)
	& = \int_{X} \iterl{u}_i(x)\,\iterl{v}_i(y)\,\sKernel(x,y)\,\diff \mu_i(x) \geq \exp(-2\|c\|/\veps) \,\|\mu_i\| \cdot \RadNik{\iterlm{\nu}_{j}}{\nu}(y).
	\qedhere
	\end{align*}
\end{proof}

\begin{lemma}[Approximate triangle inequality for $\KL$]
	\label{lem:PhiProductDecomposition}
	Let $L \in (0,\infty)$, $n \in \N$. Then for all $(s_1,\ldots,s_n) \in [0,L]^n$ one finds
	\begin{align}
	\label{eq:PhiProductDecomposition}
	\varphi\left(\prod_{k=1}^n s_k \right) & \leq C \cdot \sum_{k=1}^n \varphi(s_k) \quad
	\text{for} \quad 
	C =n \cdot \max\{2,L^{n-1}\}.
	\end{align}
\end{lemma}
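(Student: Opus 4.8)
The plan is to reduce the claim about a product to an additive estimate by taking logarithms, and then to exploit the elementary convexity/monotonicity properties of $\varphi$. First I would record the two facts about $\varphi(s)=s\log s - s + 1$ that make everything work: $\varphi\geq 0$ on $[0,\infty)$ with $\varphi(1)=0$, and $\varphi$ is convex, decreasing on $(0,1]$ and increasing on $[1,\infty)$. Write $P=\prod_{k=1}^n s_k$. If any $s_k=0$ then $P=0$ and $\varphi(P)=\varphi(0)=1$; on the other hand $\sum_k\varphi(s_k)\geq\varphi(0)=1$ as well, so the inequality holds with room to spare (indeed with constant $1\leq C$). Hence I may assume all $s_k\in(0,L]$.

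The core case splits according to whether $P\leq 1$ or $P>1$. When $P\leq 1$: since $\varphi$ is decreasing on $(0,1]$ and $P=\prod s_k\geq\prod_{k:s_k<1}s_k$ — more precisely $P$ is at most each partial product, but the clean route is: at least one factor, say $s_{k_0}$, satisfies $s_{k_0}\leq P^{1/n}\leq 1$, hence... actually the simplest argument: $P\leq \min_k s_k$ is false in general, so instead I would argue via the logarithm. Let $t_k=\log s_k$, so $t_k\leq\log L$ and $\log P=\sum_k t_k$. The function $s\mapsto\varphi(e^t)=e^t\cdot t - e^t + 1$ is convex in $t$ (its second derivative is $e^t(t+1)$, which is not globally nonnegative, so this is the delicate point — see below). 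I would instead bound $\varphi(P)$ directly: for $P\in(0,1]$ use $0\leq\varphi(P)\leq\varphi(\min_k s_k')$ where I observe that whichever factor is smallest, call it $s_{k_0}$, satisfies $s_{k_0}^n\leq$ (product of the $n$ factors each $\geq s_{k_0}$)? No — that inequality goes the wrong way when other factors exceed $1$. So the honest statement is: some factor $s_{k_0}\leq 1$ (since $P\leq 1$ forces not all $s_k>1$), hence $P\leq s_{k_0}\cdot\prod_{k\neq k_0}s_k$; bounding the remaining factors by $L$ gives $P\leq s_{k_0}L^{n-1}$, but that makes $P$ larger, not smaller. The workable bound is $P\geq s_{k_0}^n$ only when all factors $\leq 1$; in the mixed case I would group factors: let $A=\{k:s_k\geq 1\}$, $B=\{k:s_k<1\}$. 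Then $\prod_{A}s_k\leq L^{|A|}$ and $\prod_B s_k\leq P/\prod_A s_k \cdot(\text{nothing})$ — I will use $\varphi(P)\leq\varphi\big(\prod_B s_k\cdot L^{|A|}\big)$ by monotonicity in the appropriate region, then iterate a two-variable version of the inequality, $\varphi(ab)\leq \max\{2,L\}(\varphi(a)+\varphi(b))$ for $a,b\in(0,L]$, by induction on $n$.

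So the real plan is: (1) prove the $n=2$ case $\varphi(ab)\leq\max\{2,L\}\,(\varphi(a)+\varphi(b))$ for $a,b\in[0,L]$ by a direct case analysis on the positions of $a,b,ab$ relative to $1$, using $\varphi(ab)=ab\log a + ab\log b - ab + 1$ and comparing termwise with $a\log a - a+1$ and $b\log b-b+1$, absorbing cross terms like $ab\log a$ into $a\log a$ at the cost of a factor controlled by $b\leq L$ (this is where $L^{n-1}$ and the $2$ enter); (2) induct: $\varphi\big(\prod_1^n s_k\big)=\varphi\big((\prod_1^{n-1}s_k)\cdot s_n\big)\leq\max\{2,L^{n-1}\}\big(\varphi(\prod_1^{n-1}s_k)+\varphi(s_n)\big)$ using the two-variable bound with the first argument ranging in $[0,L^{n-1}]$, then apply the inductive hypothesis to $\varphi(\prod_1^{n-1}s_k)$; (3) collect constants to reach $C=n\max\{2,L^{n-1}\}$. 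I expect the main obstacle to be the $n=2$ base case, specifically handling the sign of $\log a$ and $\log b$ in all four sign-combinations and verifying that the constant $\max\{2,L\}$ genuinely suffices (the factor $2$ being needed near $a=b=1$ where the Hessian-type comparison is tight, and $L$ needed when one argument is large); once that lemma is nailed down, the induction and bookkeeping are routine.
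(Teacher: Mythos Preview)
Your actual plan (after the exploratory first two-thirds, where you correctly discard several attempts) is to prove a two-variable version $\varphi(ab)\leq \max\{2,L\}(\varphi(a)+\varphi(b))$ for $a,b\in[0,L]$ and then peel off factors one at a time. This has two genuine problems. First, the base case as you state it is false: taking $a=b=L$ gives $\varphi(L^2)-2L\,\varphi(L)=(L-1)^2>0$, so $\varphi(L^2)/(\varphi(L)+\varphi(L))>L$; concretely at $L=2$ one has $\varphi(4)=8\log 2-3\approx 2.55$ but $\max\{2,L\}\cdot 2\varphi(2)=4\varphi(2)=8\log 2-4\approx 1.55$. The correct $n=2$ constant is the lemma's $2\max\{2,L\}$, not $\max\{2,L\}$. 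Second, and more seriously, even with the right two-variable constant the induction compounds constants \emph{multiplicatively}: applying the two-variable bound with first argument $\prod_{k=1}^{n-1}s_k\in[0,L^{n-1}]$ costs a factor $2\max\{2,L^{n-1}\}$, and unrolling all the way down puts $\prod_{j=1}^{n-1}2\max\{2,L^{j}\}$ in front of $\varphi(s_1)$. For $L\geq2$ this is $2^{n-1}L^{n(n-1)/2}$, not $n\max\{2,L^{n-1}\}$; for $L\leq 1$ it is $4^{n-1}$, not $2n$. Your step (3) therefore does not go through.

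The paper's route is essentially different and avoids induction. It writes the claim as a bound on the supremum of $\varphi\big(\prod_k s_k\big)\big/\sum_k\varphi(s_k)$, uses the signs of the partial derivatives to restrict attention to the region where all $s_k$ lie on the same side of $1$, and then---via convexity of $r\mapsto\varphi(e^r)$ on $\R_+$ together with permutation symmetry---reduces to the diagonal $s_1=\cdots=s_n=\tilde s$, where a direct one-variable computation gives $\varphi(\tilde s^{\,n})/(n\varphi(\tilde s))\leq n\tilde s^{\,n-1}\leq nL^{n-1}$ for $\tilde s\geq1$ and the bound $2n$ for $\tilde s\leq 1$. The symmetrization-to-the-diagonal step is the idea your plan is missing; it is what produces the additive factor $n$ rather than a multiplicative blow-up.
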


\begin{proof}
	For convenience introduce the functions $S,A, B : \R_+^n \to \R$,
	\begin{align*}
	S(s) & = \prod_{k=1}^n s_k, &
	A(s) & = \varphi(S(s)), &
	B(s) & = \sum_{k=1}^n \varphi(s_k).
	\end{align*}
	Note that $A$ and $B$ are zero if and only if $s=(1,\ldots,1)$. In this case \eqref{eq:PhiProductDecomposition} holds for any $C \in \R$. When one of the $s_k$ equals zero, the bound holds for $C \geq 1$.
	So, to obtain a feasible constant $C$ we obtain an upper bound on
	\begin{align}
	\label{eq:ProofPhiProductDecompositionSup}
	\sup\left\{ \frac{A(s)}{B(s)} \middle| s \in (0,L]^n,\,s \neq (1,\ldots,1) \right\}.
	\end{align}

	Consider now the case where $S(s) \geq 1$ and $s_k<1$ for some $k$. In this case we find that
	\begin{align*}
	\frac{\partial A}{\partial s_k}(s) & = \varphi'(S(s)) \tfrac{S(s)}{s_k} = \log(S(s)) \tfrac{S(s)}{s_k} \geq 0, &
	\frac{\partial B}{\partial s_k}(s) & = \varphi'(s_k) = \log(s_k) < 0.
	\end{align*}
	In this case we can increase $\tfrac{A(s)}{B(s)}$ by increasing $s_k$.
	Likewise, assume $0<S(s)<1$, $s_k>1$. Then we find
	\begin{align*}
	\frac{\partial A}{\partial s_k}(s) & < 0, &
	\frac{\partial B}{\partial s_k}(s) & > 0.
	\end{align*}
	So we can increase $\tfrac{A(s)}{B(s)}$ by decreasing $s_k$. In summary, we can restrict the supremum \eqref{eq:ProofPhiProductDecompositionSup} to the cases $[S(s) \geq 1,\,s_k \geq 1]$ and $[S(s)<1,\,0<s_k \leq 1]$.
	
	\smallskip
	Supremum over $[S(s) \geq 1,\,s_k \geq 1]$: with the additional constraint we can rewrite \eqref{eq:ProofPhiProductDecompositionSup} as
	\[
	\sup\left\{ \sup \left\{ \frac{A(s)}{B(s)} \middle| s \in [1,L]^n, S(s)=\tilde{S} \right\}
	\middle|
	\tilde{S} \in (1,L^n] \right\}
	\]
	where we can ignore the case $\tilde{S}=1$ since then $s=(1,\ldots,1)$. By reparametrizing $s_k = \exp(r_k)$ we find:
	\[
	\sup\left\{ \sup \left\{ \frac{\varphi(\tilde{S})}{\sum_{k=1}^n \varphi(\exp(r_k))} \middle| r \in [0,\log L]^n, \sum_{k=1}^n r_k =\log \tilde{S} \right\}
	\middle|
	\tilde{S} \in (1,L^n] \right\}
	\]
	Since $r \mapsto \sum_{k=1}^n \varphi(\exp(r_k))$ is convex on $\R_+^n$ and invariant under permuting the order of the arguments $r_k$, the inner supremum is attained when setting $r_k = \frac{1}{n} \log \tilde{S}$ and thus one finds (by reparametrizing $\tilde{S}=\tilde{s}^n$)
	\[
	\sup\left\{ \frac{\varphi(\tilde{s}^n)}{n \cdot \varphi(\tilde{s})} 
	\middle|
	\tilde{s} \in (1,L] \right\}
	\]
	Observe now that
	\[
	g(\tilde{s}) \assign \varphi(\tilde{s}^n) - n^2\tilde{s}^{n-1}\varphi(\tilde{s}) \leq 0 \quad \text{for all }\tilde{s}>1 
	\]
	because $g(1) = 0$ and $g'(\tilde{s}) = -n^2(n-1)\tilde{s}^{n-2}\varphi(\tilde{s}) \leq 0$ for all $\tilde{s} > 1$. In particular,
	\[
	\frac{\varphi(\tilde{s}^n)}{n\varphi(\tilde{s})} \leq n\tilde{s}^{n-1} \leq nL^{n-1} \quad \text{for all } \tilde{s} \in (1,L]
	\]
	which yields the conclusion.
	
	\smallskip
	Supremum over $[S(s)<1,\,0<s_k \leq 1]$: for $s \in [0,1]^n$ let $r_k \assign 1-s_k$ and set $R=\sum_{k=1}^n r_k$. Then one finds $S(s)\geq \max\{0,1-R\}$ and, since $\varphi$ is decreasing on $[0,1]$, $A(s)=\varphi(S(s))\leq \varphi(\max\{0,1-R\})$.
	Similarly for $B$, with convexity of $\varphi$ and Jensen's inequality, one has
	\begin{align*}
	B(s) = \sum_{k=1}^n \varphi(s_k) \geq n \cdot \varphi\left(\tfrac{1}{n} \sum_{k=1}^n s_k \right)=
	n \cdot \varphi\big(1-\tfrac{1}{n} R\big).
	\end{align*}
	This allows us to rewrite \eqref{eq:ProofPhiProductDecompositionSup} for the second case as
	\[
	\sup \left\{ \frac{A(s)}{B(s)} \middle| s \in [0,1]^n,\, S(s)<1 \right\}
	\leq \sup \left\{ \frac{\varphi(\max\{0,1-R\})}{n \cdot \varphi\big(1-\tfrac{1}{n} R\big)}
	\middle| R \in (0,n] \right\}
	\]
	Since the denominator is increasing in $R$ and the numerator is constant on $R \in [1,n]$, we can restrict to $R \in (0,1]$ and prove that the supremum is bounded by $2n$. To do so, we introduce
	\[
	g(R) \assign \varphi(1-R) - 2n^2\varphi\big(1-\tfrac{1}{n} R\big) \quad \text{for } R \in [0,1].
	\]
	To conclude it suffices to prove that $g(R) \leq 0$ for all $R \in [0,1]$ and any $n \geq 2$. Indeed, we have $g(0) = 0$, $g(1) < 0$ and the derivative $g'(R) = -\log(1-R) + 2n\log\big(1-\tfrac{1}{n} R\big)$ is such that $g'(0) = 0$, $g'(R) \to +\infty$ as $R \to 1^-$ and $g'$ has a unique zero on $(0,1)$ at $R_n = \frac{n}{2n-1}$. We conclude that in $(0,1)$ the function $g$ has a unique critical point (a minimum) attained at $R_n$, thus $g(R) \leq 0$ for all $R \in [0,1]$.	
\end{proof}

\bibliography{references}{}
\bibliographystyle{spmpsci}

\end{document}